\RequirePackage{snapshot}
\documentclass[english,a4paper,fleqn]{cas-sc}
\usepackage[T1]{fontenc}
\usepackage[utf8]{inputenc}
\usepackage{color}
\usepackage{units}
\usepackage{amsmath}
\usepackage{amsthm}
\usepackage{amssymb}
\usepackage{stmaryrd}
\usepackage{graphicx}
\usepackage[numbers]{natbib}
\usepackage{subscript}

\makeatletter

\providecommand{\tabularnewline}{\\}

\def\RCSfile{cas-sc}%
 \def\RCSversion{2.3}%
 \def\RCSdate{2021/05/11}%
\NeedsTeXFormat{LaTeX2e}[1995/12/01]
\ProvidesClass{\RCSfile}[\RCSdate, \RCSversion: Formatting class
 for CAS single column articles]
%

%
%
\newif\iflongmktitle    \longmktitlefalse
\newif\ifdc             \global\dcfalse
\newif\ifsc             \global\sctrue
\newif\ifcasreviewlayout  \global\casreviewlayoutfalse
\newif\ifcasfinallayout   \global\casfinallayoutfalse

\setcounter{blind}{0}

\def\blstr#1{\gdef\@blstr{#1}}
\def\@blstr{1}
\newdimen\@bls
\@bls=\baselineskip

\DeclareOption{singleblind}{\setcounter{blind}{1}}
\DeclareOption{doubleblind}{\setcounter{blind}{2}}
\DeclareOption{longmktitle}{\global\longmktitletrue}
\DeclareOption{final}{\global\casfinallayouttrue}
\DeclareOption{review}{\global\casreviewlayouttrue}

\ExecuteOptions{a4paper,10pt,oneside,fleqn}
\DeclareOption*{\PassOptionsToClass{\CurrentOption}{article}}
\ProcessOptions

\RequirePackage{graphicx}
\RequirePackage{amsmath,amsfonts,amssymb}
\allowdisplaybreaks

\RequirePackage{expl3,xparse}
\@ifundefined{regex_match:nnTF}{\RequirePackage{l3regex}}{}
\RequirePackage{etoolbox}
\RequirePackage{booktabs,makecell,multirow,array,colortbl,dcolumn,stfloats}
\RequirePackage{xspace,xstring,footmisc}
\RequirePackage[svgnames,dvipsnames]{xcolor}


 \let\comma\@empty
\let\tnotesep\@empty
%
%

\RequirePackage{cas-common}

%
%
\ExplSyntaxOn

\RenewDocumentCommand \maketitle {}
{
\ifbool { usecasgrabsbox }
  {
    \setcounter{page}{0}
    \thispagestyle{empty}
    \unvbox\casgrabsbox
  } { }
\pagebreak
\ifbool { usecashlsbox }
  {
    \setcounter{page}{0}
    \thispagestyle{empty}
    \unvbox\casauhlbox
  } { }
\pagebreak
\thispagestyle{first}
\ifbool{longmktitle}
{
  \ifnum\theblind>0\relax
      \LongMaketitleBox[blind]
  \else
		\LongMaketitleBox
  \fi
  \ProcessLongTitleBox
}
{
  \ifnum\theblind>0\relax
      \MaketitleBox[blind]
  \else
		\MaketitleBox
  \fi
  \printFirstPageNotes
}
\normalcolor \normalfont
\setcounter{footnote}{\int_use:N \g_stm_fnote_int}
\renewcommand\thefootnote{\arabic{footnote}}
\gdef\@pdfauthor{\infoauthors}
\gdef\@pdfsubject{Complex ~STM ~Content}
\ifbool{casreviewlayout}{\doublespacing}{}
}

%
%
\RequirePackage[T1]{fontenc}

\file_if_exist:nTF { stix.sty }
{
\file_if_exist:nTF { charis.sty }
{
  \RequirePackage[notext]{stix}
  \RequirePackage{charis}
}
{ \RequirePackage{stix} }
}
{
\iow_term:x {  *********************************************************** }
\iow_term:x { ~Stix ~ and ~ Charis~ fonts ~ are ~ not ~ available ~ }
\iow_term:x { ~ in ~TeX~system.~Hence~CMR~ fonts~ are ~ used. }
\iow_term:x {  *********************************************************** }
}

\file_if_exist:nTF { inconsolata }
{ \RequirePackage[scaled=.85]{inconsolata} }
{ \tex_gdef:D \ttdefault { cmtt } }

\ifbool{casreviewlayout}{\RequirePackage{setspace}}{}

\ExplSyntaxOff

%
%

\usepackage[%
paperwidth=192mm,
paperheight=262mm,
headsep=12pt,
footskip=12pt,
]{geometry}
\geometry{footskip=12pt,vmargin={19mm,19mm},hmargin={13.7mm,13.7mm},}

\RequirePackage[colorlinks]{hyperref}
\colorlet{scolor}{black}
\colorlet{hscolor}{DarkSlateGrey}
\hypersetup{%
pdfcreator={LaTeX3; cas-sc.cls; hyperref.sty},
pdfproducer={pdfTeX;},
linkcolor={hscolor},
urlcolor={hscolor},
citecolor={hscolor},
filecolor={hscolor},
menucolor={hscolor},
}

%
%

\numberwithin{equation}{section}
\numberwithin{figure}{section}
\theoremstyle{plain}
\newtheorem{thm}{\protect\theoremname}
\theoremstyle{remark}
\newtheorem{rem}[thm]{\protect\remarkname}
\theoremstyle{plain}
\newtheorem{lem}[thm]{\protect\lemmaname}
\newcommand\RevisionTextThree[1]{\textcolor{teal}{#1}}
\newcommand\RevisionTextOne[1]{\textcolor{red}{#1}}

\usepackage{algpseudocode,algorithm,algorithmicx}
\usepackage{tikz}
\usetikzlibrary{shapes,arrows,calc,decorations.pathreplacing}
\usepackage{lineno}

\@ifundefined{showcaptionsetup}{}{%
 \PassOptionsToPackage{caption=false}{subfig}}
\usepackage{subfig}
\makeatother

\usepackage{babel}
\providecommand{\lemmaname}{Lemma}
\providecommand{\remarkname}{Remark}
\providecommand{\theoremname}{Theorem}

\begin{document}
\let\WriteBookmarks\relax 
\def\floatpagepagefraction{1} 
\def\textpagefraction{.001}

\shorttitle{}
\shortauthors{E.J. Ching et al.}

\title[mode = title]{Positivity-preserving and entropy-bounded discontinuous Galerkin method for the chemically reacting, compressible Euler equations. Part II: The multidimensional case}

\author[1]{Eric J. Ching}
\cormark[1] 
\ead{eric.ching@nrl.navy.mil} 
\affiliation[1]{organization={Laboratories of Computational Physics and Fluid Dynamics},             
	     addressline={U.S. Naval Research Laboratory},              
	     city={Washington},              
		 state={DC},   
	     postcode={20375}}

\author[1]{Ryan F. Johnson}
\author[1]{Andrew D. Kercher}
\cortext[1]{Corresponding author}

\begin{abstract}
In this second part of our two-part paper, we extend to multiple spatial
dimensions the one-dimensional, fully conservative, positivity-preserving,
and entropy-bounded discontinuous Galerkin scheme developed in the
first part for the chemically reacting Euler equations. Our primary
objective is to enable robust and accurate solutions to complex reacting-flow
problems using the high-order discontinuous Galerkin method without
requiring extremely high resolution. Variable thermodynamics and detailed
chemistry are considered. Our multidimensional framework can be regarded
as a further generalization of similar positivity-preserving and/or
entropy-bounded discontinuous Galerkin schemes in the literature.
In particular, the proposed formulation is compatible with curved
elements of arbitrary shape, a variety of numerical flux functions,
general quadrature rules with positive weights, and mixtures of thermally
perfect gases. Preservation of pressure equilibrium between adjacent
elements, especially crucial in simulations of multicomponent flows,
is discussed. Complex detonation waves in two and three dimensions
are accurately computed using high-order polynomials. Enforcement
of an entropy bound, as opposed to solely the positivity property,
is found to significantly improve stability. Mass, total energy, and
atomic elements are shown to be discretely conserved.
\end{abstract}
\begin{keywords} Discontinuous Galerkin method \sep Combustion \sep Detonation \sep  Minimum entropy principle \sep  Positivity-preserving \sep  \end{keywords}

\maketitle
\global\long\def\middlebar{\,\middle|\,}%
\global\long\def\average#1{\left\{  \!\!\left\{  #1\right\}  \!\!\right\}  }%
\global\long\def\expnumber#1#2{{#1}\mathrm{e}{#2}}%
 \newcommand*{\horzbar}{\rule[.5ex]{2.5ex}{0.5pt}}

\global\long\def\revisionmathone#1{\textcolor{red}{#1}}%

\global\long\def\revisionmathtwo#1{\textcolor{blue}{#1}}%

\global\long\def\revisionmaththree#1{\textcolor{teal}{#1}}%

\makeatletter \def\ps@pprintTitle{  \let\@oddhead\@empty  \let\@evenhead\@empty  \def\@oddfoot{\centerline{\thepage}}  \let\@evenfoot\@oddfoot} \makeatother

\let\svthefootnote\thefootnote\let\thefootnote\relax\footnotetext{\\ \hspace*{90pt}DISTRIBUTION STATEMENT A. Approved for public release. Distribution is unlimited.}\addtocounter{footnote}{-1}\let\thefootnote\svthefootnote

\section{Introduction\label{sec:Introduction}}

This paper is the second part of a series of two that introduces a
fully conservative, positivity-preserving, and entropy-bounded discontinuous
Galerkin (DG) method for simulating the multicomponent, chemically
reacting, compressible Euler equations. In Part I~\citep{Chi22},
we addressed the one-dimensional case; here, we focus on the multidimensional
case. The starting point of our methodology is the fully conservative,
high-order scheme previously developed by Johnson and Kercher~\citep{Joh20_2}
that can maintain pressure equilibrium (in an approximate sense) in
smooth regions of the flow or across material interfaces when the
temperature is continuous. The generation of such oscillations is
a major, well-known issue that inhibits fully conservative numerical
schemes~\citep{Abg88,Kar94,Abg96}. Nonconservative methods are a
common alternative to circumvent this drawback. The fully conservative
formulation in~\citep{Joh20_2} instead maintains both pressure equilibrium
and discrete conservation of mass and total energy through consistent
evaluation of both the complex thermodynamics and the resulting semidiscrete
form, as well as the proper choice of nodal basis. A series of complex
multicomponent reacting flows was computed, including a three-dimensional
reacting shear flow, which did not require additional stabilization
to calculate. Also simulated was a two-dimensional moving detonation
wave. With artificial viscosity to stabilize the flow-field discontinuities,
the correct cellular structure was predicted. However, a linear polynomial
approximation and a very fine mesh were required to maintain robustness,
illustrating the challenge of using high-order methods to achieve
stable and accurate solutions to multidimensional detonation-wave
problems on relatively coarse meshes. In light of this difficulty,
we aim to develop a positivity-preserving and entropy-bounded DG method
that can robustly and efficiently simulate complex reacting flows
in multiple dimensions using high-order polynomial approximations. 

\subsection{Summary of Part I}

In Part I~\citep{Chi22}, we introduced the groundwork to construct
such a formulation. First, we established a minimum entropy principle
satisfied by entropy solutions to the multicomponent Euler equations
with chemical source terms, extending to the reacting case the proof
by Gouasmi et al.~\citep{Gou20} of a minimum entropy principle in
the nonreacting case. This principle, which states that the spatial
minimum of the specific thermodynamic entropy increases with time,
is a critical component of the theoretical basis for the formulation.
 In the remainder of Part I~\citep{Chi22}, we introduced the mathematical
framework in one spatial dimension that ensures the discrete solution
satisfies the following: nonnegative species concentrations, positive
density, positive pressure, and specific thermodynamic entropy bounded
from below. We discussed how to maintain compatibility with the strategies
devised in~\citep{Joh20_2} to maintain pressure equilibrium between
adjacent elements. In addition, a two-point numerical state function
was derived, as part of an entropy-stable DG scheme based on diagonal-norm
summation-by-parts operators for treating stiff chemical reactions
in the reaction step of an operator splitting procedure. Applying
the coupled solver to canonical one-dimensional test cases, we demonstrated
that the formulation can achieve robust and accurate solutions on
relatively coarse meshes. Optimal high-order convergence in a smooth
flow, namely thermal-bubble advection, was obtained. Furthermore,
we found that the enforcement of only the positivity property (i.e.,
nonnegative concentrations and positive density and pressure) can
fail to suppress large-scale nonlinear instabilities, but the added
enforcement of an entropy bound significantly improves robustness,
to a greater degree than in the monocomponent, calorically perfect
case.

\subsection{Contributions of Part II}

In this paper, we extend the solver to two and three spatial dimensions.
Our main contributions are as follows:
\begin{itemize}
\item The mathematical framework in Part I~\citep{Chi22} is enhanced to
account for curved, multidimensional elements of arbitrary shape.
Our multidimensional extension further generalizes current multidimensional
positivity-preserving/entropy-bounded DG schemes in the literature~\citep{Zha10,Zha12,Lv15_2,Jia18}
by relaxing restrictions on the volume and surface quadrature rules,
physical modeling, polynomial order of the geometric approximation,
and/or numerical flux function. We also discuss how to deal with curved
elements in the reaction step. 
\item We propose strategies to maintain compatibility with the procedures
introduced in~\citep{Joh20_2} to preserve pressure equilibrium,
which is less straightforward than in one dimension. We compute the
advection of a thermal bubble in two dimensions to determine whether
pressure equilibrium can be maintained when using curved elements
(not considered in~\citep{Joh20_2}).
\item We compute complex, large-scale detonation-wave problems in two and
three dimensions with detailed chemistry. Enforcement of only the
positivity property often fails to provide adequate stabilization
(even with artificial viscosity), while enforcing an entropy bound
enables robust calculations on coarse meshes. 
\end{itemize}

\subsection{Additional background}

Positivity-preserving DG schemes have emerged in recent years as a
popular numerical technique to simulate fluid flows in a robust manner.
\citep[Section 1]{Chi22} briefly reviews the history of the development
of positivity-preserving and related entropy-bounded DG methods pioneered
by Zhang and Shu~\citep{Zha10_2,Zha10} for the monocomponent, nonreacting
Euler equations; here, we focus on the multidimensional aspect. The
key idea in constructing such positivity-preserving/entropy-bounded
schemes is to expand the element average of the solution as a convex
combination of first-order three-point systems and pointwise values.
Zhang and Shu~\citep{Zha10} devised one such expansion on rectangular
meshes based on tensor products of Gauss-Legendre and Gauss-Lobatto
rules, which was later extended to straight-sided triangular elements~\citep{Zha12}
by utilizing triangle-rectangle transformations~\citep{Dub91,Kir06}.
Lv and Ihme~\citep{Lv15_2} introduced an expansion compatible with
curved elements of arbitrary shape. The only restriction on the volume
and surface quadrature rules is that the weights be positive; the
surface quadrature points need not be part of the set of volume quadrature
points, and the surface quadrature rules can be different among the
faces of a given element, which is crucial for prismatic elements
and $p$-adaptive calculations. However, the expansion in~\citep{Lv15_2}
relies on the Lax-Friedrichs numerical flux, and the resulting time-step-size
constraint assumes a calorically perfect gas. Jiang and Liu~\citep{Jia18}
proposed an expansion that is compatible with certain polygonal elements
and does not rely on the Lax-Friedrichs numerical flux, but the following
assumptions are made: (a) for a given face, the geometric Jacobian
and surface normal are constant and (b) the same surface quadrature
rule is employed for each face. Said expansion can be viewed as a
generalization of that by Zhang et al.~\citep{Zha12} for straight-sided
triangular elements. In this paper, we introduce an expansion that
is compatible with:
\begin{itemize}
\item curved elements of arbitrary shape
\item any invariant-region-preserving numerical flux
\item any combination of volume and surface quadrature rules (which can
differ among faces) with positive weights
\end{itemize}
Furthermore, no physics-based assumptions are placed on the resulting
time-step-size constraint. 

\subsection{Outline}

The remainder of this paper is organized as follows. The governing
equations and basic DG discretization are reviewed in Sections~\ref{sec:governing_equations}
and~\ref{sec:DG-discretization}, respectively. The following section
presents the positivity-preserving and entropy-bounded multidimensional
DG formulation for the transport step. Results for two-dimensional
thermal-bubble advection and complex two- and three-dimensional moving
detonation-wave simulations are given in Section~\ref{sec:results-2D}.
We close the paper with concluding remarks. 

It is recommended that Part I~\citep{Chi22} be read first since
the formulation developed here relies on many of the key ideas introduced
in detail in the first part. For conciseness, important concepts already
discussed in Part I are only briefly summarized in this paper.

\section{Governing equations\label{sec:governing_equations}}

The governing equations are the compressible, multicomponent, chemically
reacting Euler equations, written as
\begin{equation}
\frac{\partial y}{\partial t}+\nabla\cdot\mathcal{F}\left(y\right)-\mathcal{S}\left(y\right)=0\label{eq:conservation-law-strong-form}
\end{equation}
where $t\in\mathbb{R}^{+}$ is time and $y(x,t):\mathbb{R}^{d}\times\mathbb{R}^{+}\rightarrow\mathbb{R}^{\revisionmathone m}$
is the conservative state vector, given by

\begin{equation}
y=\left(\rho v_{1},\ldots,\rho v_{d},\rho e_{t},C_{1},\ldots,C_{n_{s}}\right)^{T}.\label{eq:reacting-navier-stokes-state}
\end{equation}
$x=(x_{1},\ldots,x_{d})$ denotes the physical coordinates, with $d$
indicating the number of spatial dimensions, $v=\left(v_{1},\ldots,v_{d}\right)$
is the velocity vector, $e_{t}$ is the specific total energy, and
$C_{i}$ is the concentration of the $i$th species. $\rho$ is the
density, computed as
\[
\rho=\sum_{i=1}^{n_{s}}\rho_{i}=\sum_{i=1}^{n_{s}}W_{i}C_{i},
\]

\noindent where $\rho_{i}$ is the partial density and $W_{i}$ is
the molecular weight of the $i$th species. $n_{s}$ is the total
number of species, and $m=d+n_{s}+1$ is the total number of state
variables. $Y_{i}=\rho_{i}/\rho$ and $X_{i}=C_{i}/\sum_{i=1}^{n_{s}}C_{i}$
are the mass fraction and mole fraction, respectively, of the $i$th
species. $\mathcal{F}(y):\mathbb{R}^{m}\rightarrow\mathbb{R}^{m\times d}$
in Equation~(\ref{eq:conservation-law-strong-form}) is the convective
flux, the $k$th spatial component of which is defined as
\begin{equation}
\mathcal{F}_{k}^{c}\left(y\right)=\left(\rho v_{k}v_{1}+P\delta_{k1},\ldots,\rho v_{k}v_{d}+P\delta_{kd},v_{k}\left(\rho e_{t}+P\right),v_{k}C_{1},\ldots,v_{k}C_{n_{s}}\right)^{T},\label{eq:reacting-navier-stokes-spatial-convective-flux-component}
\end{equation}
where $P$ is the pressure, computed as
\begin{equation}
P=R^{0}T\sum_{i=1}^{n_{s}}C_{i},\label{eq:EOS}
\end{equation}
with $T$ denoting the temperature and $R^{0}$ the universal gas
constant. The total energy is given by

\[
e_{t}=u+\frac{1}{2}\sum_{k=1}^{d}v_{k}v_{k},
\]
where $u$ is the mixture-averaged specific internal energy, calculated
as
\[
u=\sum_{i=1}^{n_{s}}Y_{i}u_{i},
\]
with $u_{i}$ indicating the specific internal energy of the $i$th
species. In this work, we employ the thermally perfect gas model,
with $u_{i}$ approximated by a polynomial function of temperature
as
\begin{equation}
u_{i}=\sum_{k=0}^{n_{p}+1}b_{ik}T^{k}.\label{eq:internal-energy-polynomial}
\end{equation}
The specific heats at constant volume and constant pressure, $c_{v}$
and $c_{p}$, the specific enthalpy, $h_{i}$, and the specific thermodynamic
entropy, $s_{i}$, of the $i$th species can be calculated by appropriately
differentiating/integrating Equation~(\ref{eq:internal-energy-polynomial})
and incorporating the integration constants from the NASA curve fits~\citep{Mcb93,Mcb02}.
The mixture-averaged thermodynamic entropy, $s$, which will be important
in subsequent sections, is given by
\[
s=\sum_{i=1}^{n_{s}}Y_{i}s_{i}.
\]
Finally, $\mathcal{S}(y):\mathbb{R}^{m}\rightarrow\mathbb{R}^{m}$
in Equation~(\ref{eq:conservation-law-strong-form}) is the chemical
source term, given by
\begin{equation}
\mathcal{S}\left(y\right)=\left(0,\ldots,0,0,\omega_{1},\ldots,\omega_{n_{s}}\right)^{T},\label{eq:reacting-navier-stokes-source-term}
\end{equation}
where $\omega_{i}$ is the production rate of the $i$th species.
Additional details on the thermodynamic relationships and chemical
production rates can be found in Part I~\citep{Chi22}.

\section{Discontinuous Galerkin discretization\label{sec:DG-discretization}}

Let $\Omega\subset\mathbb{R}^{d}$ be the computational domain, which
is partitioned by $\mathcal{T}$, comprised of non-overlapping cells
$\kappa$ with boundaries $\partial\kappa$. Let $\mathcal{E}$ denote
the set of interfaces $\epsilon$, with $\cup_{\epsilon\in\mathcal{E}}\epsilon=\cup_{\kappa\in\mathcal{T}}\partial\kappa$.
At interior interfaces, there exists $\kappa^{+},\kappa^{-}\in\mathcal{T}$
such that $\epsilon_{\mathcal{}}=\partial\kappa^{+}\cap\partial\kappa^{-}$.
$n^{+}$ and $n^{-}$ denote the outward facing normal of $\kappa^{+}$
and $\kappa^{-}$, respectively, with $n^{+}=-n^{-}$. The discrete
subspace $V_{h}^{p}$ over $\mathcal{T}$ is defined as
\begin{eqnarray}
V_{h}^{p} & = & \left\{ \mathfrak{v}\in\left[L^{2}\left(\Omega\right)\right]^{m}\middlebar\forall\kappa\in\mathcal{T},\left.\mathfrak{v}\right|_{\kappa}\in\left[\mathcal{P}_{p}(\kappa)\right]^{m}\right\} ,\label{eq:discrete-subspace}
\end{eqnarray}
where, for $d=1$, $\mathcal{P}_{p}(\kappa)$ is the space of polynomial
functions of degree less than or equal to $p$ in $\kappa$.  For
$d>1$, the choice of polynomial space typically depends on the type
of element~\citep{Har13}.

The semi-discrete problem statement is as follows: find $y\in V_{h}^{p}$
such that
\begin{gather}
\sum_{\kappa\in\mathcal{T}}\left(\frac{\partial y}{\partial t},\mathfrak{v}\right)_{\kappa}-\sum_{\kappa\in\mathcal{T}}\left(\mathcal{F}\left(y\right),\nabla\mathfrak{v}\right)_{\kappa}+\sum_{\epsilon\in\mathcal{E}}\left(\mathcal{F}^{\dagger}\left(y^{+},y^{-},n\right),\left\llbracket \mathfrak{v}\right\rrbracket \right)_{\mathcal{E}}-\sum_{\kappa\in\mathcal{T}}\left(\mathcal{S}\left(y\right),\mathfrak{v}\right)_{\kappa}=0\qquad\forall\:\mathfrak{v}\in V_{h}^{p},\label{eq:semi-discrete-form}
\end{gather}
where $\left(\cdot,\cdot\right)$ denotes the inner product, $\mathcal{F}^{\dagger}\left(y^{+},y^{-},n\right)$
is the numerical flux, and $\left\llbracket \cdot\right\rrbracket $
is the jump operator, defined such that $\left\llbracket \mathfrak{v}\right\rrbracket =\mathfrak{v}^{+}-\mathfrak{v}^{-}$
at interior interfaces and $\left\llbracket \mathfrak{v}\right\rrbracket =\mathfrak{v}^{+}$
at boundary interfaces. Due to the stiff chemical source terms, we
employ \RevisionTextThree{second-order} Strang splitting~\citep{Str68}
over a given interval $(t_{0},t_{0}+\Delta t]$ as
\begin{align}
\frac{\partial y}{\partial t}+\nabla\cdot\mathcal{F}\left(y\right)=0 & \textup{ in }\Omega\times\left(t_{0},t_{0}+\nicefrac{\Delta t}{2}\right],\label{eq:strang-splitting-1}\\
\frac{\partial y}{\partial t}-\mathcal{S}\left(y\right)=0 & \textup{ in }\left(t_{0},t_{0}+\Delta t\right],\label{eq:strang-splitting-2}\\
\frac{\partial y}{\partial t}+\nabla\cdot\mathcal{F}\left(y\right)=0 & \textup{ in }\Omega\times\left(t_{0}+\nicefrac{\Delta t}{2},t_{0}+\Delta t\right],\label{eq:strang-splitting-3}
\end{align}
where Equations~(\ref{eq:strang-splitting-1}) and~(\ref{eq:strang-splitting-3})
are integrated in time with an explicit scheme, while Equation~(\ref{eq:strang-splitting-2})
is solved using a fully implicit, temporal DG discretization for ODEs.
\RevisionTextThree{Additional discussion on operator splitting can be found in Part I~\citep{Chi22}.}

The volume and surface terms in Equation~\ref{eq:semi-discrete-form}
are evaluated with a quadrature-free approach~\citep{Atk96,Atk98}.
A nodal basis is employed, such that the element-local polynomial
approximation of the solution is given by
\begin{equation}
y_{\kappa}=\sum_{j=1}^{n_{b}}y_{\kappa}(x_{j})\phi_{j},\label{eq:solution-approximation}
\end{equation}
where $n_{b}$ is the number of basis functions, $\left\{ \phi_{1},\ldots,\phi_{n_{b}}\right\} $
are the basis functions, and $\left\{ x_{1},\ldots,x_{n_{b}}\right\} $
are the node coordinates.  The nonlinear convective flux in the second
and third integrals in Equation~(\ref{eq:semi-discrete-form}) can
be approximated as
\begin{equation}
\mathcal{F_{\kappa}}\approx\sum_{k=1}^{n_{c}}\mathcal{F}\left(y_{\kappa}\left(x_{k}\right)\right)\varphi_{k},\label{eq:flux-projection}
\end{equation}
where $n_{c}\geq n_{b}$ and $\left\{ \varphi_{1},\ldots,\varphi_{n_{c}}\right\} $
is a set of polynomial basis functions that may be different from
those in Equation~(\ref{eq:solution-approximation}). As discussed
by Johnson and Kercher~\citep{Joh20_2}, pressure equilibrium within
and between elements is maintained under either of the following conditions:
\begin{itemize}
\item $n_{c}=n_{b}$ and the integration points are in the set of solution
nodes
\item If $n_{c}>n_{b}$ (i.e., over-integration), then the flux interpolation
in \RevisionTextOne{Equation~(\ref{eq:flux-projection})} is replaced
with
\begin{equation}
\mathcal{F_{\kappa}}\approx\sum_{k=1}^{n_{c}}\mathcal{F}\left(\widetilde{y}_{\kappa}\left(x_{k}\right)\right)\varphi_{k},\label{eq:modified-flux-projection}
\end{equation}
where $\widetilde{y}:\mathbb{R}^{m}\times\mathbb{R}\rightarrow\mathbb{R}^{m}$
is a modified state defined as
\begin{equation}
\widetilde{y}\left(y,\widetilde{P}\right)=\left(\rho v_{1},\ldots,\rho v_{d},\widetilde{\rho u}\left(C_{1},\ldots,C_{n_{s}},\widetilde{P}\right)+\frac{1}{2}\sum_{k=1}^{d}\rho v_{k}v_{k},C_{1},\ldots,C_{n_{s}}\right)^{T}.\label{eq:interpolated-state-modified}
\end{equation}
\end{itemize}
$\widetilde{\rho u}$ is a modified internal energy is evaluated from
the unmodified species concentrations and a polynomial approximation
of the pressure that interpolates onto the span of $\left\{ \phi_{1},\ldots,\phi_{n_{b}}\right\} $
as
\[
\widetilde{P}_{\kappa}=\sum_{j=1}^{n_{b}}P\left(y_{\kappa}\left(x_{j}\right)\right)\phi_{j}.
\]
Standard over-integration typically fails to preserve pressure equilibrium,
resulting in the generation of spurious pressure oscillations that
can lead to solver divergence. On the other hand, employing the modified
flux interpolation~(\ref{eq:modified-flux-projection}) in both the
volume and surface flux integrals in Equation~(\ref{eq:semi-discrete-form})
achieves pressure equilibrium both internally and between adjacent
elements (except in severely underresolved computations, in which
appreciable deviations from pressure equilibrium are inevitable). 

As discussed in Part I~\citep{Chi22}, the linear-scaling limiter
used to enforce the positivity property and entropy boundedness does
not completely eliminate small-scale nonlinear instabilities, particularly
in the vicinity of flow-field discontinuities. Therefore, the artificial
dissipation term~\citep{Har13}
\begin{equation}
-\sum_{\kappa\in\mathcal{T}}\left(\mathcal{F}^{\mathrm{AV}}\left(y,\nabla y\right),\nabla\mathfrak{v}\right)_{\kappa},\label{eq:artificial-viscosity-integral}
\end{equation}
where $\mathcal{F}^{\mathrm{AV}}(y,\nabla y)=\nu_{\mathrm{AV}}\nabla y$,
is added to the LHS of Equation~(\ref{eq:semi-discrete-form}). $\nu_{\mathrm{AV}}$
is the artificial viscosity, computed as~\citep{Joh20_2}
\[
\nu_{\mathrm{AV}}=\left(C_{\mathrm{AV}}+S_{\mathrm{AV}}\right)\left(\frac{h^{2}}{p+1}\left|\frac{\partial T}{\partial y}\cdot\frac{\mathcal{R}\left(y,\nabla y\right)}{T}\right|\right),
\]
where $S_{\mathrm{AV}}$ is a shock sensor based on intra-element
variations~\citep{Chi19}, $C_{\mathrm{AV}}$ is a user-defined coefficient,
$h$ is the element size, and $\mathcal{R}\left(y,\nabla y\right)$
is the strong form of the residual~(\ref{eq:conservation-law-strong-form}).
This artificial viscosity formulation was used to successfully dampen
nonphysical oscillations near flow-field discontinuities in a variety
of multicomponent-flow simulations~\citep{Chi22,Joh20_2}. Other
types of artificial viscosity or limiters can be employed as well.
Note that the integral~(\ref{eq:artificial-viscosity-integral})
vanishes for $\mathfrak{v}\in V_{h}^{0}$, which will be important
in Section~\ref{sec:transport-step-EBDG-2D}. For further details
on the basic DG discretization, boundary-condition implementation,
and conditions under which pressure oscillations are generated, we
refer the reader to~\citep{Joh20_2}.

\section{Transport step: Entropy-bounded discontinuous Galerkin method in
multiple dimensions\label{sec:transport-step-EBDG-2D}}

In this section, we extend the one-dimensional positivity-preserving/entropy-bounded
DG scheme presented in Part I to multiple dimensions. 

\subsection{Preliminaries\label{subsec:preliminaries-EBDG-2D}}

We first review the geometric mapping from reference space to physical
space, as well as volumetric and surface quadrature rules. The minimum
entropy principle is then summarized. Finally, a first-order three-point
system, which is a building block for the general multidimensional,
high-order scheme, is discussed. 

\subsubsection{Geometric mapping}

Consider the mapping $x(\xi):\widehat{\kappa}\rightarrow\kappa$,
with $\widehat{\kappa}$ denoting the reference element, given by
\[
x(\xi)=\sum_{m=1}^{n_{g}}x_{\kappa,m}\Phi_{m}(\xi),
\]
where $\xi\in\mathbb{R}^{d}$ are the reference coordinates, $x_{\kappa,m}$
is the physical coordinate of the $m$th node of $\kappa,$ $\Phi_{m}$
is the $m$th basis function for the geometry interpolation, and $n_{g}$
is the number of basis functions. The unit (or bi-unit) square and
right triangle with unit (or bi-unit) side length are common reference
elements for quadrilateral and triangular elements, respectively.
The local solution approximation can then be written as
\[
y_{\kappa}=\sum_{j=1}^{n_{b}}y_{\kappa}(x_{j})\phi(\xi),\quad x=x(\xi)\in\kappa,\;\forall\xi\in\widehat{\kappa}.
\]
Let $\partial\kappa^{(f)}$ be the $f$th face of $\kappa.$ The geometric
mapping $x\left(\zeta^{(f)}\right):\widehat{\epsilon}\rightarrow\partial\kappa^{(f)}$
for interfaces, where $\zeta^{(f)}\in\mathbb{R}^{d-1}$ are the reference
coordinates and $\widehat{\epsilon}$ is the reference face, is given
by
\[
x\left(\zeta^{(f)}\right)=\sum_{m=1}^{n_{g,f}^{\partial}}x_{\kappa,m}^{(f)}\Phi_{m}^{(f)}\left(\zeta^{(f)}\right),
\]
where $x_{\kappa,m}^{(f)}$ is the physical coordinate of the $m$th
node of $\partial\kappa^{(f)}$, $\Phi_{m}^{(f)}$ is the $m$th basis
function, and $n_{g,f}^{\partial}$ is the number of basis functions.
The reference face can also be mapped to the reference element (i.e.,
$\xi\left(\zeta^{(f)}\right):\widehat{\epsilon}\rightarrow\widehat{\kappa}$).

\subsubsection{Quadrature rules}

Consider a volumetric quadrature rule with points $\xi_{v}$ and positive
weights $w_{v}$ that satisfy $\sum_{v=1}^{n_{q}}w_{v}=\left|\widehat{\kappa}\right|$,
where $\left|\widehat{\kappa}\right|$ is the volume of the reference
element. The volume integral over $\kappa$ of a generic function,
$g(x)$, can be evaluated as
\[
\int_{\kappa}g(x)dx=\int_{\widehat{\kappa}}g(x(\xi))\left|J_{\kappa}(\xi)\right|d\xi\approx\sum_{v=1}^{n_{q}}g\left(x(\xi_{v})\right)\left|J_{\kappa}(\xi_{v})\right|w_{v},
\]
where $J_{\kappa}$ is the geometric Jacobian and $\left|J_{\kappa}\right|$
is its determinant. The integral evaluation is exact if $g(x)$ is
a polynomial and the quadrature rule is sufficiently accurate. Similarly,
let $\zeta_{l}$ and $w_{l}^{\partial}$ be the points and positive
weights of a surface quadrature rule, with $\sum_{l=1}^{n_{q}^{\partial}}w_{l}^{\partial}=\left|\widehat{\epsilon}\right|$,
where $\left|\widehat{\epsilon}\right|$ is the surface area of the
reference face. The surface integral over $\partial\kappa^{(f)}$
of a generic function can be evaluated as
\[
\int_{\partial\kappa^{(f)}}g(x)ds=\int_{\widehat{\epsilon}}g\left(x\left(\zeta^{(f)}\right)\right)\left|J_{\partial\kappa}^{(k)}\left(\zeta^{(f)}\right)\right|d\zeta\approx\sum_{l=1}^{n_{q}^{\partial}}g\left(x\left(\zeta_{l}^{(f)}\right)\right)\left|J_{\partial\kappa}^{(f)}\left(\zeta_{l}^{(f)}\right)\right|w_{f,l}^{\partial}=\sum_{l=1}^{n_{q}^{\partial}}g\left(x\left(\zeta_{l}^{(f)}\right)\right)\nu_{f,l}^{\partial},
\]
where $\nu_{f,l}^{\partial}=\left|J_{\partial\kappa}^{(f)}\left(\zeta_{l}^{(f)}\right)\right|w_{f,l}^{\partial}$
and $J_{\partial\kappa}^{(f)}$ is the surface Jacobian. The integral
evaluation is again exact if $g(x)$ is a polynomial and the quadrature
rule is sufficiently accurate. The closed surface integral over $\partial\kappa$
of a generic function is evaluated as
\[
\int_{\partial\kappa}g(x)ds=\sum_{f=1}^{n_{f}}\int_{\partial\kappa^{(f)}}g(x)ds=\sum_{f=1}^{n_{f}}\int_{\widehat{\epsilon}}g\left(x\left(\zeta^{(f)}\right)\right)\left|J_{\partial\kappa}^{(f)}\left(\zeta^{(f)}\right)\right|d\zeta\approx\sum_{f=1}^{n_{f}}\sum_{l=1}^{n_{q,f}^{\partial}}g\left(x\left(\zeta_{l}^{(f)}\right)\right)\nu_{f,l}^{\partial},
\]
where a different quadrature rule can be employed for each face. $n_{f}$,
the number of faces, is allowed to change among elements, but a slight
abuse of notation is done for simplicity.

Given that, as mentioned in Section~\ref{sec:DG-discretization},
a quadrature-free approach~\citep{Atk96,Atk98} is employed in this
work to evaluate the integrals in Equation~(\ref{eq:semi-discrete-form}),
the reader may question why quadrature rules are reviewed in such
detail. As in Part I, the first step in our analysis in Section~\ref{subsec:entropy-bounded-high-order-DG-2D}
is to take $\mathfrak{v}\in V_{h}^{0}$ in Equation~(\ref{eq:semi-discrete-form})
to obtain the scheme satisfied by the element averages, which remains
the same whether a quadrature-based or quadrature-free approach is
employed. Furthermore, in Section~\ref{subsec:entropy-bounded-high-order-DG-2D}
we present our analysis in terms of a quadrature-based approach for
consistency with previous studies on positivity-preserving and/or
entropy-bounded DG methods.

\subsubsection{Minimum entropy principle}

Let $U(y):\mathbb{R}^{m}\rightarrow\mathbb{R}$ be a given convex
(mathematical) entropy function and $\mathcal{F}^{s}(y):\mathbb{R}^{m}\rightarrow\mathbb{R}^{d}$
the corresponding spatial entropy flux. Satisfaction of the entropy
inequality
\begin{equation}
\frac{\partial U}{\partial t}+\nabla\cdot\mathcal{F}^{s}\leq0,\label{eq:entropy-condition}
\end{equation}
distinguishes physical solutions from nonphysical solutions when discontinuities
are present. Specifically, \emph{entropy solutions} are those that
satisfy~(\ref{eq:entropy-condition}) for all entropy/entropy-flux
pairs. $U=-\rho s$ and $\mathcal{F}^{s}=-\rho sv$ form a common,
admissible entropy/entropy-flux pair for the multicomponent Euler
equations~\citep{Gou20,Gio99}. 

The minimum entropy principle, which states that the spatial minimum
of the specific thermodynamic entropy is non-decreasing in time, follows
from a particular choice of entropy/entropy-flux pair,
\begin{equation}
\left(U,\mathcal{F}^{s}\right)=\left(-\rho f_{0}(s),-\rho vf_{0}(s)\right),
\end{equation}
where $f_{0}(s)=\min\left\{ s-s_{0},0\right\} $. Although $f_{0}(s)$
is not a smooth function of $s$, it can be written as the limit of
a sequence of smooth functions, $f_{0}(s)=\underset{\epsilon\rightarrow0}{\lim}f_{\epsilon}(s)$,
where $f_{\epsilon}(s)$ is given by~\citep{Gou20}
\[
f_{\epsilon}(s)=\frac{1}{\epsilon}\int_{-\infty}^{\infty}f_{0}(s-\mathfrak{s})\frac{\exp\left(-\frac{\mathfrak{s}^{2}}{\epsilon^{2}}\right)}{\sqrt{\pi}}d\mathfrak{s},\quad\epsilon>0.
\]
According to the minimum entropy principle, we have, for $\left|x\right|\leq R$,
\begin{equation}
s(x,t)\geq s_{0}=\underset{\left|x\right|\leq R+v_{\max}t}{\text{Ess inf}}s(x,0),\label{eq:minimum-entropy-principle}
\end{equation}
where $R>0$ and $v_{\max}$ is the maximum speed in the domain at
$t=0$. The inequality~(\ref{eq:minimum-entropy-principle}) was
first proved by Tadmor~\citep{Tad86} for the monocomponent Euler
equations. It was recently generalized to the multicomponent, nonreacting
Euler equations by Gouasmi et al.~\citep{Gou20} and then extended
to the reacting case in Part I~\citep{Chi22}.

\subsubsection{\protect\RevisionTextThree{First-order} three-point system}

Let $\mathcal{G}_{\sigma}$ be the following set:
\begin{equation}
\mathcal{G_{\sigma}}=\left\{ y\mid C_{1}>0,\ldots,C_{n_{s}}>0,\rho u^{*}>0,s\geq\sigma\right\} ,
\end{equation}
where $\sigma\in\mathbb{R}$ and $u^{*}$ is the ``shifted'' internal
energy~\citep{Hua19},
\begin{equation}
u^{*}=u-u_{0}=u-\sum_{i=1}^{n_{s}}Y_{i}b_{i0},\label{eq:shifted-internal-energy}
\end{equation}
such that $u^{*}>0$ if and only if $T>0$, provided $c_{v,i}>0,\:i=1,\ldots,n_{s}$~\citep{Gio99}.
Note that $y\in\mathcal{G}_{\sigma}$ implies $P(y)>0$. By the quasi-concavity
of $s(y)$ (which follows from the convexity of the entropy function
$U=-\rho s$~\citep{Zha12_2}) and concavity of $\rho u^{*}(y)$~\citep{Chi22}
with respect to the state, for a given $\sigma$, $\mathcal{G}_{\sigma}$
is a convex set. Note that $\mathcal{G}_{\sigma}$ is similar to the
corresponding set of admissible states in~\citep{Du19,Du19_2}, but
with the addition of the entropy constraint. Under the assumption
that the exact solution to the classical Riemann problem with initial
data 
\[
y\left(x,0\right)=\begin{cases}
y_{1}, & x<0\\
y_{2}, & x>0
\end{cases}
\]
is an entropy solution that preserves positivity, $\mathcal{G}_{\sigma}$
is an invariant set~\citep{Gue19,Gue16_2,Chi22}. Specifically, $y_{1},y_{2}\in\mathcal{G}_{\sigma}$
implies that the average of the exact Riemann solution over a domain
that includes the Riemann fan is also in $\mathcal{G}_{\sigma}$. 

Consider the one-dimensional three-point system,
\begin{gather}
y_{\kappa}^{j+1}=y_{\kappa}^{j}-\frac{\Delta t}{h}\left[\mathcal{F}^{\dagger}\left(y_{\kappa}^{j},y_{\kappa^{(1)}}^{j},-1\right)+\mathcal{F}^{\dagger}\left(y_{\kappa}^{j},y_{\kappa^{(2)}}^{j},1\right)\right],\label{eq:three-point-system}
\end{gather}
which corresponds to a $p=0$, one-dimensional, element-local DG discretization
with forward Euler time stepping, where $j$ indexes the time step,
$h$ is the element size, and $\kappa^{(1)}$ and $\kappa^{(2)}$
are the elements to the left and right of $\kappa$, respectively.
If $\mathcal{F}^{\dagger}$ is an \emph{invariant-region-preserving}
numerical flux, then $y_{\kappa}^{j},y_{\kappa_{L}}^{j},y_{\kappa_{R}}^{j}\in\mathcal{G}_{\sigma}$
implies $y_{\kappa}^{j+1}\in\mathcal{G}_{\sigma}$ under the condition~\citep{Jia18}
\begin{equation}
\frac{\Delta t\lambda}{h}\leq\frac{1}{2},\label{eq:p0-time-step-constraint}
\end{equation}
where $\lambda$ is an upper bound on the maximum wave speed of the
system. In particular, $y_{\kappa}^{j+1}$ then satisfies~\citep{Jia18,Wu21_2}
\begin{equation}
s\left(y_{\kappa}^{j+1}\right)\geq\min\left\{ s\left(y_{\kappa_{L}}^{j}\right),s\left(y_{\kappa^{(1)}}^{j}\right),s\left(y_{\kappa^{(2)}}^{j}\right)\right\} .\label{eq:p0-minimum-entropy-principle}
\end{equation}
Examples of invariant-region-preserving fluxes are the Godunov, Lax-Friedrichs,
HLL, and HLLC fluxes (see~\citep{Jia18}).

\begin{figure}[tbph]
\begin{centering}
\includegraphics[width=0.4\columnwidth]{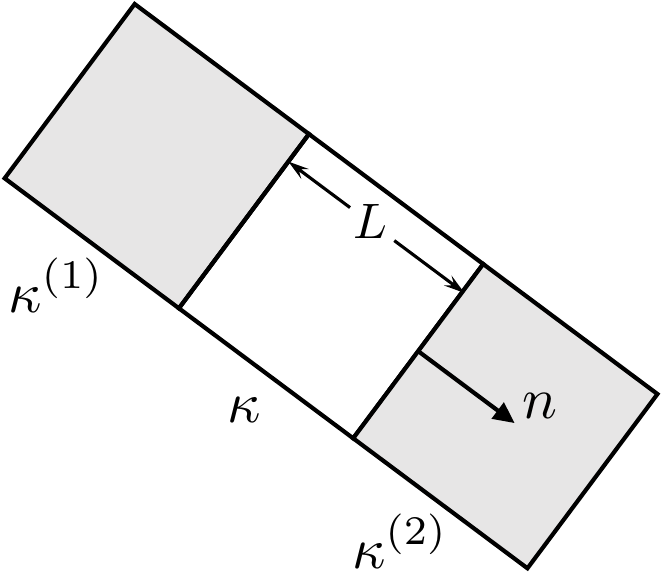}
\par\end{centering}
\caption{\label{fig:quasi-1D_system}Schematic of \protect\RevisionTextThree{a rotated three-point system in the multidimensional setting (Equation~(\ref{eq:three-point-system-2D}))}.}
\end{figure}

Next, we analyze a general three-point system in the multidimensional
setting:
\begin{gather}
y_{\kappa}^{j+1}=y_{\kappa}^{j}-\frac{\Delta t}{L}\left[\mathcal{F}^{\dagger}\left(y_{\kappa}^{j},y_{\kappa^{(1)}}^{j},n\right)+\mathcal{F}^{\dagger}\left(y_{\kappa}^{j},y_{\kappa^{\left(2\right)}}^{j},-n\right)\right],\label{eq:three-point-system-2D}
\end{gather}
where a schematic of this three-point system is given in Figure~\ref{fig:quasi-1D_system}.
 To analyze the classical Riemann problems associated with the interfaces,
we introduce a rotated coordinate system with orthonormal basis $\left\{ n,t_{1},\ldots,t_{d-1}\right\} $.
With this change of basis and letting $x^{\parallel}$ denote the
coordinate in the $n$-direction, the projected (homogeneous) governing
equations are given by~\citep{Gue16}
\begin{equation}
\frac{\partial y}{\partial t}+\frac{\partial}{\partial x^{\parallel}}\left(n\cdot\mathcal{F}(y)\right)=0,\label{eq:projected-governing-equations}
\end{equation}
with

\[
y=\left(\rho v^{\parallel},\rho v_{1}^{\perp}\ldots,\rho v_{d-1}^{\perp},\rho e_{t},C_{1},\ldots,C_{n_{s}}\right)^{T}
\]
and 
\[
n\cdot\mathcal{F}(y)=\left(\rho\left(v^{\parallel}\right)^{2}+P,\rho v^{\parallel}v_{1}^{\perp},\ldots,\rho v^{\parallel}v_{d-1}^{\perp},v^{\parallel}\left(\rho e_{t}+P\right),v^{\parallel}C_{1},\ldots,v^{\parallel}C_{n_{s}}\right)^{T},
\]
where the velocity is expanded as 
\begin{eqnarray*}
v & = & \left(v^{\parallel},v_{1}^{\perp},\ldots,v_{d-1}^{\perp}\right)\\
 & = & \left(v\cdot n,v\cdot t_{1},\ldots,v\cdot t_{d-1}\right).
\end{eqnarray*}
An equivalent system is
\begin{equation}
\begin{cases}
\frac{\partial\rho_{1}}{\partial t}+\frac{\partial}{\partial x_{1}}\left(\rho_{1}v^{\parallel}\right)=0\\
\quad\quad\quad\vdots\\
\frac{\partial\rho_{n_{s}}}{\partial t}+\frac{\partial}{\partial x_{1}}\left(\rho_{n_{s}}v^{\parallel}\right)=0\\
\frac{\partial v^{\parallel}}{\partial t}+v^{\parallel}\frac{\partial}{\partial x_{1}}\left(v^{\parallel}\right)+\frac{1}{\rho}\frac{\partial P(\rho,s)}{\partial x_{1}}=0\\
\frac{\partial v_{1}^{\perp}}{\partial t}+v^{\parallel}\frac{\partial}{\partial x_{1}}\left(v_{1}^{\perp}\right)=0\\
\quad\quad\quad\vdots\\
\frac{\partial v_{d-1}^{\perp}}{\partial t}+v^{\parallel}\frac{\partial}{\partial x_{1}}\left(v_{d-1}^{\perp}\right)=0\\
\frac{\partial s}{\partial t}+v^{\parallel}\frac{\partial}{\partial x_{1}}\left(s\right)=0
\end{cases},\label{eq:equivalent-system}
\end{equation}
where the last equation is the entropy transport equation (see~\citep[Section 4]{Chi22}).
The Jacobian of~(\ref{eq:equivalent-system}) is
\[
\left(\begin{array}{cccccccc}
v^{\parallel} & 0 & 0 & \rho_{1} & 0 & \ldots & 0 & 0\\
0 & \ddots & 0 & \vdots & \vdots & \ddots & \vdots & \vdots\\
0 & 0 & v^{\parallel} & \rho_{n_{s}} & 0 & 0\ldots & 0 & 0\\
\frac{1}{\rho}\frac{\partial P}{\partial\rho} & \ldots & \frac{1}{\rho}\frac{\partial P}{\partial\rho} & v^{\parallel} & 0 & 0\ldots & 0 & \frac{1}{\rho}\frac{\partial P}{\partial s}\\
0 & \ldots & 0 & 0 & v^{\parallel} & 0 & \ldots & 0\\
\vdots & \ddots & \vdots & \vdots & \ddots & \ddots & \ddots & \vdots\\
0 & \ldots & 0 & 0 & \ldots & 0 & v^{\parallel} & 0\\
0 & \ldots & 0 & 0 & 0 & \ldots & 0 & v^{\parallel}
\end{array}\right),
\]
which does not depend on $v^{\perp}=\left(v_{1}^{\perp},\ldots,v_{d-1}^{\perp}\right)$.
The exact solution of the classical Riemann problem associated with~(\ref{eq:projected-governing-equations})
with initial data
\[
y\left(x^{\parallel},0\right)=\begin{cases}
y_{1}, & x^{\parallel}<0\\
y_{2}, & x^{\parallel}>0
\end{cases}
\]
can then be obtained in two steps~\citep[Chapter 4.8]{Gue16,Tor13}.
The first step is to solve the one-dimensional classical Riemann problem
associated with the equations
\begin{equation}
\frac{\partial y^{\parallel}}{\partial t}+\frac{\partial\mathcal{F}\left(y^{\parallel}\right)}{\partial x^{\parallel}}=0,\label{eq:governing-equations-projected-1D}
\end{equation}
where
\[
y^{\parallel}=\left(\rho_{1},\ldots,\rho_{n_{s}},\rho v^{\parallel},\rho e_{t}^{\parallel}\right)^{T}
\]
and 
\[
\mathcal{F}\left(y^{\parallel}\right)=\left(\rho_{1}v^{\parallel},\ldots,\rho_{n_{s}}v^{\parallel},\rho\left(v^{\parallel}\right)^{2}+P,v^{\parallel}\left(e_{t}^{\parallel}+P\right)\right)^{T},
\]
with $e_{t}^{\parallel}=e_{t}-\left(v^{\parallel}\right)^{2}/2$.
The second step is to solve
\[
\frac{\partial}{\partial t}\left(\begin{array}{c}
\rho v_{1}^{\perp}\\
\vdots\\
\rho v_{d-1}^{\perp}
\end{array}\right)+\frac{\partial}{\partial x^{\parallel}}\left(\begin{array}{c}
\rho v^{\parallel}v_{1}^{\perp}\\
\vdots\\
\rho v^{\parallel}v_{d-1}^{\perp}
\end{array}\right)=0.
\]
See~\citep{Gue16} and~\citep[Chapter 4.8]{Tor13} for more information.
Notice that 
\begin{eqnarray*}
 & \rho(y)=\rho\left(y^{\parallel}\right),\quad\rho u^{*}(y)=\rho u^{*}\left(y^{\parallel}\right),\quad s(y)=s\left(y^{\parallel}\right),\\
 & C_{i}(y)=C_{i}\left(y^{\parallel}\right),i=1,\ldots,n_{s}.
\end{eqnarray*}
Consequently, since $\mathcal{G}_{\sigma}$ is an invariant set for~(\ref{eq:governing-equations-projected-1D}),
it is also an invariant set for the projected equations~(\ref{eq:projected-governing-equations}).
The invariant-region-preserving numerical fluxes for the one-dimensional
three-point system\RevisionTextThree{~(\ref{eq:three-point-system})}
can then be shown to be invariant-region-preserving for \RevisionTextThree{the rotated (quasi-one-dimensional)}
three-point system~(\ref{eq:three-point-system-2D}). Specifically,
as in~\citep{Jia18} for the one-dimensional case, the RHS of~(\ref{eq:three-point-system-2D})
can be rewritten as a convex combination of $y_{\kappa}^{j}$ and
exact-Riemann-solution averages. As such, if $y_{\kappa}^{j}$, $y_{\kappa^{(1)}}^{j}$,
and $y_{\kappa^{(2)}}^{j}$ \RevisionTextThree{in~(\ref{eq:three-point-system-2D})}
are in $\mathcal{G}_{\sigma}$, then $y_{\kappa}^{j+1}$ is also in
$\mathcal{G}_{\sigma}$ under the time-step-size constraint
\[
\frac{\Delta t\lambda}{L}\leq\frac{1}{2},
\]
where $\lambda$ is an upper bound on the maximum wave speed of the
system. \RevisionTextThree{Note that a multidimensional scheme has not yet been analyzed; such
analysis will be performed in the following subsection.} Throughout this work, we employ the HLLC numerical flux~\citep{Tor13}. 

The analysis of the \RevisionTextThree{rotated} three-point system~(\ref{eq:three-point-system-2D})
is essential for the construction of a positivity-preserving and entropy-bounded
DG scheme for $p\geq0$ on arbitrary, curved elements. Specifically,
as we demonstrate in Section~\ref{subsec:entropy-bounded-high-order-DG-2D},
the element average of the solution (for $p>0$) at the $(j+1)$th
time step, $\overline{y}_{\kappa}^{j+1}$, can be decomposed into
a convex combination of both pointwise values of $y_{\kappa}^{j}(x)$
and three-point systems involving pointwise values of $y_{\kappa}^{j}(x)$.
Therefore, with the aid of a simple limiting procedure to ensure that
said pointwise values of $y_{\kappa}^{j}(x)$ are in $\mathcal{G}_{\sigma}$,
$\overline{y}_{\kappa}^{j+1}$ will also be in $\mathcal{G}_{\sigma}$
under a time-step-size constraint.

\subsection{Entropy-bounded, high-order discontinuous Galerkin method in multiple
dimensions~\label{subsec:entropy-bounded-high-order-DG-2D}}

We are now in a position to derive a time-step constraint that ensures
that the evolved element average is in $\mathcal{G}$. The average
of $y_{\kappa}$ is given by
\begin{equation}
\overline{y}_{\kappa}=\frac{1}{\left|\kappa\right|}\int_{\kappa}ydx,\label{eq:solution-element-average}
\end{equation}
where $\left|\kappa\right|$ is the volume of $\kappa$. Let $\partial\mathcal{D}_{\kappa}$
be the set of surface integration points used to evaluate the surface
integrals in Equation~(\ref{eq:semi-discrete-form}) (i.e., the numerical
flux terms), defined as
\[
\partial\mathcal{D}_{\kappa}=\bigcup_{f=1}^{n_{f}}\left\{ x\left(\zeta_{l}^{(f)}\right),l=1,\ldots,n_{q,f}^{\partial}\right\} .
\]
As discussed by Lv and Ihme~\citep{Lv15_2}, given a sufficiently
accurate quadrature rule, the element average can be expanded as
\begin{align}
\overline{y}_{\kappa} & =\sum_{v=1}^{n_{q}}\frac{\left|J_{\kappa}(\xi_{v})\right|w_{v}}{|\kappa|}y_{\kappa}\left(\xi_{v}\right),\nonumber \\
 & =\sum_{v=1}^{n_{q}}\theta_{v}y_{\kappa}\left(\xi_{v}\right)+\sum_{f=1}^{n_{f}}\sum_{l=1}^{n_{q,f}^{\partial}}\theta_{f,l}y_{\kappa}\left(\xi\left(\zeta_{l}^{(f)}\right)\right).\label{eq:element-average-convex-combination-2D}
\end{align}
If $\partial\mathcal{D}_{\kappa}\subseteq\left\{ x(\xi_{v}),v=1,\ldots,n_{q}\right\} $,
i.e., this set of volume quadrature points contains the surface integration
points, then we can simply take 
\[
\theta_{v}=\begin{cases}
\frac{\left|J_{\kappa}(\xi_{v})\right|w_{v}}{|\kappa|} & x\left(\xi_{v}\right)\notin\partial\mathcal{D}_{\kappa}\\
0 & x\left(\xi_{v}\right)\in\partial\mathcal{D}_{\kappa}
\end{cases}
\]
and
\[
\theta_{f,l}=\frac{\left|J_{\kappa}\left(\xi\left(\zeta_{l}^{(f)}\right)\right)\right|w_{f,l}}{|\kappa|N_{f,l}},
\]
where $w_{f,l}$ is the volume quadrature weight corresponding to
the volume quadrature point that satisfies $\xi_{v}=\xi\left(\zeta_{l}^{(f)}\right)$
and $N_{f,l}$ denotes the number of faces belonging to $\kappa$
shared by the given point. If $\partial\mathcal{D}_{\kappa}\nsubseteq\left\{ x(\xi_{v}),v=1,\ldots,n_{q}\right\} $,
we can instead take 
\[
\theta_{v}=\frac{\left|J_{\kappa}(\xi_{v})\right|w_{v}}{|\kappa|}-\sum_{f=1}^{n_{f}}\sum_{l=1}^{n_{q,f}^{\partial}}\theta_{f,l}\psi_{v}\left(\xi\left(\zeta_{l}^{(f)}\right)\right),
\]
where $\left\{ \psi_{1},\ldots,\psi_{n_{d}}\right\} $ is a set of
Lagrange basis functions, with $n_{b}\leq n_{d}\leq n_{q}$, whose
interpolation nodes are located at a subset of the $n_{q}$ quadrature
points, while $\psi_{v}=0$ for $v=n_{d}+1,\ldots,n_{q}$, such that
\begin{align*}
\sum_{v=1}^{n_{q}}\theta_{v}y_{\kappa}\left(\xi_{v}\right) & =\sum_{v=1}^{n_{q}}\left[\frac{\left|J_{\kappa}(\xi_{v})\right|w_{v}}{|\kappa|}-\sum_{f=1}^{n_{f}}\sum_{l=1}^{n_{q,f}^{\partial}}\theta_{f,l}\psi_{v}\left(\xi\left(\zeta_{l}^{(f)}\right)\right)\right]y_{\kappa}\left(\xi_{v}\right)\\
 & =\sum_{v=1}^{n_{q}}\frac{\left|J_{\kappa}(\xi_{v})\right|w_{v}}{|\kappa|}y_{\kappa}\left(\xi_{v}\right)-\sum_{f=1}^{n_{f}}\sum_{l=1}^{n_{q,f}^{\partial}}\theta_{f,l}\sum_{v=1}^{n_{q}}y_{\kappa}\left(\xi_{v}\right)\psi_{v}\left(\xi\left(\zeta_{l}^{(f)}\right)\right)\\
 & =\sum_{v=1}^{n_{q}}\frac{\left|J_{\kappa}(\xi_{v})\right|w_{v}}{|\kappa|}y_{\kappa}\left(\xi_{v}\right)-\sum_{f=1}^{n_{f}}\sum_{l=1}^{n_{q,f}^{\partial}}\theta_{f,l}y_{\kappa}\left(\xi\left(\zeta_{l}^{(f)}\right)\right).
\end{align*}
$\theta_{f,l}$ will be related to a time-step-size constraint later
in this section. Note that $\sum_{v=1}^{n_{q}}\theta_{v}+\sum_{f=1}^{n_{f}}\sum_{l=1}^{n_{q,f}^{\partial}}\theta_{f,l}=1.$The
positivity of the quadrature weights guarantees the existence of positive
values of $\theta_{f,l}$ that yield $\theta_{v}\geq0$~\citep{Lv15_2}.
Since $\sum_{v}\theta_{v}+\sum_{f}\sum_{l}\theta_{f,l}=1$, the RHS
of Equation~(\ref{eq:element-average-convex-combination-2D}) is
a convex combination of $y_{\kappa}$ evaluated at a set of points
in $\kappa$.  Let $\kappa^{(f)}$ denote the $f$th neighbor of
$\kappa$, and let $\mathcal{D}_{\kappa}$ be the set of points at
which the solution is evaluated in Equation~(\ref{eq:element-average-convex-combination-2D}),
\[
\mathcal{D_{\kappa}}=\partial\mathcal{D}_{\kappa}\bigcup\left\{ x(\xi_{v}),v=1,\ldots,n_{q}\right\} =\bigcup_{f=1}^{n_{f}}\left\{ x\left(\zeta_{l}^{(f)}\right),l=1,\ldots,n_{q,f}^{\partial}\right\} \bigcup\left\{ x(\xi_{v}),v=1,\ldots,n_{q}\right\} .
\]
Without loss of generality, we assume that the $n_{f}$th face is
such that $N=\max_{f}\left\{ n_{q,f}^{\partial}\right\} =n_{q,n_{f}}^{\partial}$
and define $\nu_{f,l}^{\partial}$ as
\begin{equation}
\nu_{f,l}^{\partial}=\begin{cases}
\left|J_{\partial\kappa}^{(f)}(\zeta_{l})\right|w_{f,l}^{\partial}, & l=1,\ldots,n_{q,f}^{\partial}\\
0, & l=n_{q,f}^{\partial}+1,\ldots,N
\end{cases},\label{eq:Jacobian_weights_piecewise}
\end{equation}
such that
\[
\sum_{f=1}^{n_{f}}\sum_{l=1}^{N}\nu_{f,l}^{\partial}=\sum_{f=1}^{n_{f}}\sum_{l=1}^{n_{q,f}^{\partial}}\nu_{f,l}^{\partial}=\sum_{f=1}^{n_{f}}\left|\partial\kappa^{(f)}\right|=\left|\partial\kappa\right|,
\]
where $|\partial\kappa|$ is the surface area of $\kappa$ and $\left|\partial\kappa^{(f)}\right|$
is the surface area of the $f$th face. Standard flux interpolation,
as in Equation~(\ref{eq:flux-projection}), is assumed here. In Section~\ref{subsec:modified-flux-interpolation-2D},
we will account for the modified flux interpolation in Equation~(\ref{eq:modified-flux-projection}).
Taking $\mathfrak{v}\in V_{h}^{0}$ in Equation~(\ref{eq:semi-discrete-form}),
the scheme satisfied by the element averages can then be expanded
as
\begin{eqnarray}
\overline{y}_{\kappa}^{j+1} & = & \overline{y}_{\kappa}^{j}-\frac{\Delta t}{\left|\kappa\right|}\sum_{f=1}^{n_{f}}\int_{\partial\kappa^{(f)}}\mathcal{F}^{\dagger}\left(y_{\kappa}^{j},y_{\kappa^{(f)}}^{j},n\right)ds\nonumber \\
 & = & \overline{y}_{\kappa}^{j}-\sum_{f=1}^{n_{f}}\sum_{l=1}^{n_{q,f}^{\partial}}\frac{\Delta t\nu_{f,l}^{\partial}}{|\kappa|}\mathcal{F}^{\dagger}\left(y_{\kappa}^{j}\left(\xi\left(\zeta_{l}^{(f)}\right)\right),y_{\kappa^{(f)}}^{j}\left(\xi\left(\zeta_{l}^{(f)}\right)\right),n\left(\zeta_{l}^{(f)}\right)\right)\label{eq:fully-discrete-form-average-2D-unexpanded}\\
 & = & \sum_{v=1}^{n_{q}}\theta_{v}y_{\kappa}^{j}\left(\xi_{v}\right)+\sum_{f=1}^{n_{f}}\sum_{l=1}^{n_{q,f}^{\partial}}\left[\theta_{f,l}y_{\kappa}^{j}\left(\xi\left(\zeta_{l}^{(f)}\right)\right)-\frac{\Delta t\nu_{f,l}^{\partial}}{|\kappa|}\mathcal{F}^{\dagger}\left(y_{\kappa}^{j}\left(\xi\left(\zeta_{l}^{(f)}\right)\right),y_{\kappa^{(f)}}^{j}\left(\xi\left(\zeta_{l}^{(f)}\right)\right),n\left(\zeta_{l}^{(f)}\right)\right)\right]\nonumber \\
 & = & \sum_{v=1}^{n_{q}}\theta_{v}y_{\kappa}^{j}\left(\xi_{v}\right)+\sum_{f=1}^{n_{f}-1}\sum_{l=1}^{n_{q,f}^{\partial}}\theta_{f,l}A_{f,l}+\sum_{l=1}^{N-1}\theta_{n_{f},l}B_{l}+\theta_{n_{f},N}C.\label{eq:fully-discrete-form-average-2D}
\end{eqnarray}
Note that the volume quadrature rule used to expand $\overline{y}_{\kappa}^{j}$
need not be explicitly used to evaluate any integrals in Equation~(\ref{eq:semi-discrete-form}).
Equation~(\ref{eq:fully-discrete-form-average-2D-unexpanded}) and
thus Equation~(\ref{eq:fully-discrete-form-average-2D}) still hold
for the quadrature-free approach~\citep{Atk96,Atk98} employed in
this work since the weights $w_{v}$, $w_{f,l}$, and $w_{f,l}^{\partial}$
can be taken to be the integrals of the basis functions over the reference
element/face, corresponding to a generalized Newton-Cotes quadrature
rule~\citep{Win08}. \RevisionTextThree{Consequently, $\theta_{v}$, $\theta_{f,l}$, and $\nu_{f,l}^{\partial}$
can be treated in the same manner as in a quadrature-based approach.}Notice that $\overline{y}_{\kappa}^{j+1}$ in Equation~(\ref{eq:fully-discrete-form-average-2D})
is expressed as a convex combination of $A_{f,l}$ , $B_{l}$, $C$,
and pointwise values $y_{\kappa}^{j}(x_{v})$. $A_{f,l}$, $B_{l}$,
and $C$ in Equation~(\ref{eq:fully-discrete-form-average-2D}) are
given by
\begin{eqnarray}
A_{f,l} & = & y_{\kappa}^{j}\left(\xi\left(\zeta_{l}^{(f)}\right)\right)-\frac{\Delta t\nu_{f,l}^{\partial}}{\theta_{f,l}|\kappa|}\left[\mathcal{F}^{\dagger}\left(y_{\kappa}^{j}\left(\xi\left(\zeta_{l}^{(f)}\right)\right),y_{\kappa^{(f)}}^{j}\left(\xi\left(\zeta_{l}^{(f)}\right)\right),n\left(\zeta_{l}^{(f)}\right)\right)\right.\nonumber \\
 &  & \left.+\mathcal{F}^{\dagger}\left(y_{\kappa}^{j}\left(\xi\left(\zeta_{l}^{\left(f\right)}\right)\right),y_{\kappa}^{j}\left(\xi\left(\zeta_{l}^{\left(n_{f}\right)}\right)\right),-n\left(\zeta_{l}^{(f)}\right)\right)\right],\label{eq:A_f_l-definition}
\end{eqnarray}
for $f=1,\ldots,n_{f}-1,\;l=1,\ldots,n_{q,f}^{\partial}$;
\begin{eqnarray}
B_{l} & = & y_{\kappa}^{j}\left(\xi\left(\zeta_{l}^{\left(n_{f}\right)}\right)\right)\nonumber \\
 &  & -\frac{\Delta t\nu_{n_{f},l}^{\partial}}{\theta_{n_{f},l}|\kappa|}\biggl[\mathcal{F}^{\dagger}\left(y_{\kappa}^{j}\left(\xi\left(\zeta_{l}^{\left(n_{f}\right)}\right)\right),y_{\kappa^{\left(n_{f}\right)}}^{j}\left(\xi\left(\zeta_{l}^{\left(n_{f}\right)}\right)\right),n\left(\zeta_{l}^{\left(n_{f}\right)}\right)\right)\nonumber \\
 &  & +\mathcal{F}^{\dagger}\left(y_{\kappa}^{j}\left(\xi\left(\zeta_{l}^{\left(n_{f}\right)}\right)\right),y_{\kappa}^{j}\left(\xi\left(\zeta_{N}^{\left(n_{f}\right)}\right)\right),-n\left(\zeta_{l}^{\left(n_{f}\right)}\right)\right)\biggr]\nonumber \\
 &  & -\sum_{f=1}^{n_{f}-1}\frac{\Delta t\nu_{f,l}^{\partial}}{\theta_{n_{f},l}|\kappa|}\left[\mathcal{F}^{\dagger}\left(y_{\kappa}^{j}\left(\xi\left(\zeta_{l}^{\left(n_{f}\right)}\right)\right),y_{\kappa}^{j}\left(\xi\left(\zeta_{l}^{\left(f\right)}\right)\right),n\left(\zeta_{l}^{\left(f\right)}\right)\right)\right.\nonumber \\
 &  & \left.+\mathcal{F}^{\dagger}\left(y_{\kappa}^{j}\left(\xi\left(\zeta_{l}^{\left(n_{f}\right)}\right)\right),y_{\kappa}^{j}\left(\xi\left(\zeta_{N}^{\left(n_{f}\right)}\right)\right),-n\left(\zeta_{l}^{\left(f\right)}\right)\right)\right],\label{eq:B_l-definition}
\end{eqnarray}
for $l=N-1$; and 
\begin{eqnarray}
C & = & y_{\kappa}^{j}\left(\xi\left(\zeta_{N}^{\left(n_{f}\right)}\right)\right)\nonumber \\
 &  & -\frac{\Delta t\nu_{n_{f},N}^{\partial}}{\theta_{n_{f},N}|\kappa|}\mathcal{F}^{\dagger}\left(y_{\kappa}^{j}\left(\xi\left(\zeta_{N}^{\left(n_{f}\right)}\right)\right),y_{\kappa^{\left(n_{f}\right)}}^{j}\left(\xi\left(\zeta_{N}^{\left(n_{f}\right)}\right)\right),n\left(\zeta_{N}^{\left(n_{f}\right)}\right)\right)\nonumber \\
 &  & -\sum_{l=1}^{N-1}\frac{\Delta t\nu_{n_{f},l}^{\partial}}{\theta_{n_{f},N}|\kappa|}\mathcal{F}^{\dagger}\left(y_{\kappa}^{j}\left(\xi\left(\zeta_{N}^{\left(n_{f}\right)}\right)\right),y_{\kappa}^{j}\left(\xi\left(\zeta_{l}^{\left(n_{f}\right)}\right)\right),n\left(\zeta_{l}^{\left(n_{f}\right)}\right)\right)\nonumber \\
 &  & -\sum_{f=1}^{n_{f}-1}\frac{\Delta t\nu_{f,N}^{\partial}}{\theta_{n_{f},N}|\kappa|}\mathcal{F}^{\dagger}\left(y_{\kappa}^{j}\left(\xi\left(\zeta_{N}^{\left(n_{f}\right)}\right)\right),y_{\kappa}^{j}\left(\xi\left(\zeta_{N}^{\left(f\right)}\right)\right),n\left(\zeta_{N}^{\left(f\right)}\right)\right)\nonumber \\
 &  & -\sum_{f=1}^{n_{f}-1}\sum_{l=1}^{N-1}\frac{\Delta t\nu_{f,l}^{\partial}}{\theta_{n_{f},N}|\kappa|}\mathcal{F}^{\dagger}\left(y_{\kappa}^{j}\left(\xi\left(\zeta_{N}^{\left(n_{f}\right)}\right)\right),y_{\kappa}^{j}\left(\xi\left(\zeta_{l}^{\left(n_{f}\right)}\right)\right),n\left(\zeta_{l}^{\left(f\right)}\right)\right).\label{eq:C-definition}
\end{eqnarray}
The above expansion relies on the conservation property of the numerical
flux:
\[
\mathcal{F}^{\dagger}\left(y_{1},y_{2},n\right)=-\mathcal{F}^{\dagger}\left(y_{2},y_{1},-n\right).
\]
$A_{f,l}$ in Equation~(\ref{eq:A_f_l-definition}) takes the form
of the three-point system~(\ref{eq:three-point-system-2D}). Invoking
the identity 
\begin{equation}
y_{\kappa}^{j}\left(\xi\left(\zeta_{l}^{\left(n_{f}\right)}\right)\right)=\sum_{f=1}^{n_{f}}\frac{\left|\partial\kappa^{(f)}\right|}{|\partial\kappa|}y_{\kappa}^{j}\left(\xi\left(\zeta_{l}^{\left(n_{f}\right)}\right)\right),\label{eq:identity-for-B_l}
\end{equation}
$B_{l}$ can be rewritten as
\begin{eqnarray}
B_{l} & = & \frac{\left|\partial\kappa^{(n_{f})}\right|}{|\partial\kappa|}\Biggl\{ y_{\kappa}^{j}\left(\xi\left(\zeta_{l}^{\left(n_{f}\right)}\right)\right)-\frac{\Delta t\nu_{n_{f},l}^{\partial}|\partial\kappa|}{\theta_{n_{f},l}|\kappa|\left|\partial\kappa^{(n_{f})}\right|}\left[\mathcal{F}^{\dagger}\left(y_{\kappa}^{j}\left(\xi\left(\zeta_{l}^{\left(n_{f}\right)}\right)\right),y_{\kappa^{\left(n_{f}\right)}}^{j}\left(\xi\left(\zeta_{l}^{\left(n_{f}\right)}\right)\right),n\left(\zeta_{l}^{\left(n_{f}\right)}\right)\right)\right.\nonumber \\
 &  & +\mathcal{F}^{\dagger}\left(y_{\kappa}^{j}\left(\xi\left(\zeta_{l}^{\left(n_{f}\right)}\right)\right),y_{\kappa}^{j}\left(\xi\left(\zeta_{N}^{\left(n_{f}\right)}\right)\right),-n\left(\zeta_{l}^{\left(n_{f}\right)}\right)\right)\biggr]\Biggr\}\nonumber \\
 &  & +\sum_{f=1}^{n_{f}-1}\frac{\left|\partial\kappa^{(f)}\right|}{|\partial\kappa|}\Biggl\{ y_{\kappa}^{j}\left(\xi\left(\zeta_{l}^{\left(n_{f}\right)}\right)\right)-\frac{\Delta t\nu_{f,l}^{\partial}|\partial\kappa|}{\theta_{n_{f},l}|\kappa|\left|\partial\kappa^{(f)}\right|}\left[\mathcal{F}^{\dagger}\left(y_{\kappa}^{j}\left(\xi\left(\zeta_{l}^{\left(n_{f}\right)}\right)\right),y_{\kappa}^{j}\left(\xi\left(\zeta_{l}^{\left(f\right)}\right)\right),n\left(\zeta_{l}^{\left(f\right)}\right)\right)\right.\nonumber \\
 &  & \left.+\mathcal{F}^{\dagger}\left(y_{\kappa}^{j}\left(\xi\left(\zeta_{l}^{\left(n_{f}\right)}\right)\right),y_{\kappa}^{j}\left(\xi\left(\zeta_{N}^{\left(n_{f}\right)}\right)\right),-n\left(\zeta_{l}^{\left(f\right)}\right)\right)\right]\Biggr\},\label{eq:B_l-definition-2}
\end{eqnarray}
for $l=1,\ldots,N-1$. The RHS of Equation~(\ref{eq:B_l-definition-2})
is a convex combination of three-point systems and, if some $\nu_{f,l}^{\partial}$
are zero, the pointwise value $y_{\kappa}^{j}\left(\xi\left(\zeta_{l}^{\left(n_{f}\right)}\right)\right)$.
To analyze $C$, we invoke the following identities:
\[
y_{\kappa}^{j}\left(\xi\left(\zeta_{N}^{\left(n_{f}\right)}\right)\right)=\sum_{f=1}^{n_{f}}\sum_{l=1}^{N}\frac{\nu_{f,l}^{\partial}}{|\partial\kappa|}y_{\kappa}^{j}\left(\xi\left(\zeta_{N}^{\left(n_{f}\right)}\right)\right)
\]
and
\begin{eqnarray*}
\sum_{f=1}^{n_{f}}\sum_{l=1}^{N}\nu_{f,l}^{\partial}\mathcal{F}^{\dagger}\left(y_{\kappa}^{j}\left(\xi\left(\zeta_{N}^{\left(n_{f}\right)}\right)\right),y_{\kappa}^{j}\left(\xi\left(\zeta_{N}^{\left(n_{f}\right)}\right)\right),n\left(\zeta_{l}^{\left(f\right)}\right)\right) & = & \sum_{f=1}^{n_{f}}\sum_{l=1}^{n_{q,f}^{\partial}}\nu_{f,l}^{\partial}\mathcal{F}\left(y_{\kappa}^{j}\left(\xi\left(\zeta_{N}^{\left(n_{f}\right)}\right)\right)\right)\cdot n\left(\zeta_{l}^{\left(f\right)}\right)\\
 & = & \int_{\partial\kappa}\mathcal{F}^{\dagger}\left(y_{\kappa}^{j}\left(\xi\left(\zeta_{N}^{\left(n_{f}\right)}\right)\right)\right)\cdot nds\\
 & = & \int_{\kappa}\nabla\cdot\mathcal{F}^{\dagger}\left(y_{\kappa}^{j}\left(\xi\left(\zeta_{N}^{\left(n_{f}\right)}\right)\right)\right)dx\\
 & = & 0,
\end{eqnarray*}
where the first line is due to the consistency property of the numerical
flux and the second line assumes sufficient accuracy of the surface
quadrature rule. $C$ in Equation~(\ref{eq:C-definition}) can then
be rewritten as
\begin{eqnarray*}
C & = & \frac{\nu_{n_{f},N}^{\partial}}{|\partial\kappa|}\Biggl\{ y_{\kappa}^{j}\left(\xi\left(\zeta_{N}^{\left(n_{f}\right)}\right)\right)-\frac{\Delta t|\partial\kappa|}{\theta_{n_{f},N}|\kappa|}\left[\mathcal{F}^{\dagger}\left(y_{\kappa}^{j}\left(\xi\left(\zeta_{N}^{\left(n_{f}\right)}\right)\right),y_{\kappa^{\left(n_{f}\right)}}^{j}\left(\xi\left(\zeta_{N}^{\left(n_{f}\right)}\right)\right),n\left(\zeta_{N}^{\left(n_{f}\right)}\right)\right)\right.\\
 &  & +\mathcal{F}^{\dagger}\left(y_{\kappa}^{j}\left(\xi\left(\zeta_{N}^{\left(n_{f}\right)}\right)\right),y_{\kappa}^{j}\left(\xi\left(\zeta_{N}^{\left(n_{f}\right)}\right)\right),-n\left(\zeta_{N}^{\left(n_{f}\right)}\right)\right)\biggr]\Biggr\}\\
 &  & +\sum_{l=1}^{N-1}\frac{\nu_{n_{f},l}^{\partial}}{|\partial\kappa|}\Biggl\{ y_{\kappa}^{j}\left(\xi\left(\zeta_{N}^{\left(n_{f}\right)}\right)\right)-\frac{\Delta t|\partial\kappa|}{\theta_{n_{f},N}|\kappa|}\left[\mathcal{F}^{\dagger}\left(y_{\kappa}^{j}\left(\xi\left(\zeta_{N}^{\left(n_{f}\right)}\right)\right),y_{\kappa}^{j}\left(\xi\left(\zeta_{l}^{\left(n_{f}\right)}\right)\right),n\left(\zeta_{l}^{\left(n_{f}\right)}\right)\right)\right.\\
 &  & \left.+\mathcal{F}^{\dagger}\left(y_{\kappa}^{j}\left(\xi\left(\zeta_{N}^{\left(n_{f}\right)}\right)\right),y_{\kappa}^{j}\left(\xi\left(\zeta_{N}^{\left(n_{f}\right)}\right)\right),-n\left(\zeta_{l}^{\left(n_{f}\right)}\right)\right)\right]\Biggr\}\\
 &  & +\sum_{f=1}^{n_{f}-1}\frac{\nu_{f,N}^{\partial}}{|\partial\kappa|}\Biggl\{ y_{\kappa}^{j}\left(\xi\left(\zeta_{N}^{\left(n_{f}\right)}\right)\right)-\frac{\Delta t|\partial\kappa|}{\theta_{n_{f},N}|\kappa|}\left[\mathcal{F}^{\dagger}\left(y_{\kappa}^{j}\left(\xi\left(\zeta_{N}^{\left(n_{f}\right)}\right)\right),y_{\kappa}^{j}\left(\xi\left(\zeta_{N}^{\left(f\right)}\right)\right),n\left(\zeta_{N}^{\left(f\right)}\right)\right)\right.\\
 &  & \left.+\mathcal{F}^{\dagger}\left(y_{\kappa}^{j}\left(\xi\left(\zeta_{N}^{\left(n_{f}\right)}\right)\right),y_{\kappa}^{j}\left(\xi\left(\zeta_{N}^{\left(n_{f}\right)}\right)\right),-n\left(\zeta_{N}^{\left(f\right)}\right)\right)\right]\Biggr\}\\
 &  & +\sum_{f=1}^{n_{f}-1}\sum_{l=1}^{N-1}\frac{\nu_{f,l}^{\partial}}{|\partial\kappa|}\Biggl\{ y_{\kappa}^{j}\left(\xi\left(\zeta_{N}^{\left(n_{f}\right)}\right)\right)-\frac{\Delta t|\partial\kappa|}{\theta_{n_{f},N}|\kappa|}\left[\mathcal{F}^{\dagger}\left(y_{\kappa}^{j}\left(\xi\left(\zeta_{N}^{\left(n_{f}\right)}\right)\right),y_{\kappa}^{j}\left(\xi\left(\zeta_{l}^{\left(n_{f}\right)}\right)\right),n\left(\xi\left(\zeta_{l}^{\left(f\right)}\right)\right)\right)\right.\\
 &  & \left.+\mathcal{F}^{\dagger}\left(y_{\kappa}^{j}\left(\xi\left(\zeta_{N}^{\left(n_{f}\right)}\right)\right),y_{\kappa}^{j}\left(\xi\left(\zeta_{N}^{\left(n_{f}\right)}\right)\right),-n\left(\xi\left(\zeta_{l}^{\left(f\right)}\right)\right)\right)\right],
\end{eqnarray*}
which is a convex combination of three-point systems (regardless of
whether some $\nu_{f,l}^{\partial}$ are zero). As such, $\overline{y}_{\kappa}^{j+1}$
is a convex combination of the following components:
\begin{itemize}
\item Pointwise values, $y_{\kappa}^{j}(x_{v})$, in $\kappa$
\item Three-point systems involving pointwise values, $y_{\kappa}^{j}\left(\xi\left(\zeta_{l}^{\left(f\right)}\right)\right)$
and $y_{\kappa^{(f)}}^{j}\left(\xi\left(\zeta_{l}^{\left(f\right)}\right)\right)$,
along $\partial\kappa$
\item If some $\nu_{f,l}^{\partial}$ are zero, pointwise values, $y_{\kappa}^{j}\left(\xi\left(\zeta_{l}^{\left(n_{f}\right)}\right)\right)$,
along $\partial\kappa$
\end{itemize}
The above results lead to the following theorem, where we use $y_{\kappa}^{-}$
to denote the exterior state along $\partial\kappa$.
\begin{thm}
\label{thm:CFL-condition-2D}If $y_{\kappa}^{j}(x)\in\mathcal{G}_{\sigma},\;\forall x\in\mathcal{D_{\kappa}}$,
and $y_{\kappa}^{-,j}\in\mathcal{G}_{\sigma},\;\forall x\in\partial\mathcal{D}_{\kappa}$,
with
\begin{equation}
\sigma\leq\min\left\{ \min\left\{ s\left(y_{\kappa}^{j}(x)\right)\vert x\in\mathcal{D_{\kappa}}\right\} ,\min\left\{ s\left(y_{\kappa}^{-,j}(x)\right)\vert x\in\mathcal{\partial D_{\kappa}}\right\} \right\} ,\label{eq:sigma-definition-1}
\end{equation}
 then $\overline{y}_{\kappa}^{j+1}$ in Equation~(\ref{eq:fully-discrete-form-average-2D-unexpanded})
is also in $\mathcal{G}_{\sigma}$ under the constraint
\begin{align}
\frac{\Delta t\lambda}{|\kappa|} & \leq\frac{1}{2}\min\left\{ L_{A},L_{B},L_{C}\right\} ,\label{eq:CFL-condition-2D}\\
L_{A} & =\min\left\{ \left.\frac{\theta_{f,l}}{\nu_{f,l}^{\partial}}\right|f=1,\ldots,n_{f}-1,\;l=1,\ldots,n_{q,f}^{\partial}\right\} ,\nonumber \\
L_{B} & =\min\left\{ \left.\frac{\theta_{n_{f},l}}{\nu_{f,l}^{\partial}}\frac{\left|\partial\kappa^{(f)}\right|}{|\partial\kappa|}\right|,f=1,\ldots,n_{f},\;l=1,\ldots,\min\left\{ n_{q,f}^{\partial},N-1\right\} \right\} ,\nonumber \\
L_{C} & =\frac{\theta_{n_{f},N}}{|\partial\kappa|},\nonumber 
\end{align}
and the conditions
\begin{equation}
\begin{cases}
\theta_{v}\geq0, & v=1,\ldots,n_{q}\\
\theta_{f,l}>0, & f=1,\ldots,n_{f},\;l=1,\ldots,n_{q,f}^{\partial}.
\end{cases}\label{eq:theta-conditions-2D}
\end{equation}
\end{thm}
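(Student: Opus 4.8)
The plan is to combine the convexity of $\mathcal{G}_{\sigma}$ with the decomposition of $\overline{y}_{\kappa}^{j+1}$ already assembled in Equations~\eqref{eq:fully-discrete-form-average-2D}--\eqref{eq:C-definition}, in the rewritten form~\eqref{eq:B_l-definition-2}, and in the displayed expansion of $C$. There $\overline{y}_{\kappa}^{j+1}$ is written as a convex combination of interior pointwise values $y_{\kappa}^{j}(x_v)$, of three-point systems of the form~\eqref{eq:three-point-system-2D}, and (when some $\nu_{f,l}^{\partial}$ vanish) of boundary pointwise values $y_{\kappa}^{j}(\xi(\zeta_l^{(n_f)}))$. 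Since $\mathcal{G}_{\sigma}$ is convex, it suffices to show that each constituent lies in $\mathcal{G}_{\sigma}$. The interior and boundary pointwise values are in $\mathcal{G}_{\sigma}$ directly by hypothesis, so the work reduces to the three-point systems.

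First I would verify the time-step constraint by reading off, for each three-point system, the prefactor multiplying its flux difference and identifying it with the ratio $\Delta t/L$ appearing in~\eqref{eq:three-point-system-2D}. For $A_{f,l}$ in Equation~\eqref{eq:A_f_l-definition} this prefactor is $\Delta t\nu_{f,l}^{\partial}/(\theta_{f,l}|\kappa|)$, so the three-point-system condition~\eqref{eq:p0-time-step-constraint}, $\Delta t\lambda/L\le\tfrac12$, becomes $\Delta t\lambda/|\kappa|\le\tfrac12\,\theta_{f,l}/\nu_{f,l}^{\partial}$; minimizing over the admissible $(f,l)$ produces $L_A$. The analogous prefactors $\Delta t\nu_{f,l}^{\partial}|\partial\kappa|/(\theta_{n_f,l}|\kappa|\,|\partial\kappa^{(f)}|)$ in the braced terms of~\eqref{eq:B_l-definition-2} and $\Delta t|\partial\kappa|/(\theta_{n_f,N}|\kappa|)$ in the braced terms of the rewritten $C$ yield $L_B$ and $L_C$, respectively. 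Intersecting the three requirements reproduces exactly Equation~\eqref{eq:CFL-condition-2D}.

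Under this constraint, the invariant-region-preserving property established for the multidimensional three-point system in Section~\ref{subsec:preliminaries-EBDG-2D} places each three-point system in $\mathcal{G}_{\sigma}$. Its positivity components (positive concentrations, positive $\rho u^{*}$, and hence positive pressure) follow immediately, and its entropy obeys the discrete minimum entropy principle~\eqref{eq:p0-minimum-entropy-principle}, which bounds the output entropy below by the smallest entropy among the three states feeding that system---one interior center, one interior neighbor, and one exterior neighbor $y_{\kappa}^{-,j}$. Each input lies in $\mathcal{G}_{\sigma}$ (the interior ones by the first hypothesis, the exterior one by the second) with entropy at least $\sigma$ by the choice of $\sigma$ in Equation~\eqref{eq:sigma-definition-1}; hence the output entropy is at least $\sigma$ and the system is in $\mathcal{G}_{\sigma}$. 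With every constituent of the convex combination shown to lie in $\mathcal{G}_{\sigma}$, convexity yields $\overline{y}_{\kappa}^{j+1}\in\mathcal{G}_{\sigma}$.

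I expect the principal difficulty to be bookkeeping rather than anything conceptual. One must confirm that the redistributions leading to~\eqref{eq:B_l-definition-2} and to the rewritten $C$---which rely on the identity~\eqref{eq:identity-for-B_l}, the consistency and conservation properties of $\mathcal{F}^{\dagger}$, and sufficient accuracy of the surface quadrature---genuinely produce convex combinations with nonnegative weights, and that after redistribution each braced expression retains the exact two-flux structure of~\eqref{eq:three-point-system-2D} with the prefactor claimed above. The conditions~\eqref{eq:theta-conditions-2D} enter precisely here: $\theta_{f,l}>0$ guarantees the ratios defining $L_A$, $L_B$, and $L_C$ are finite and positive, while $\theta_v\ge0$ keeps the interior pointwise contributions admissible in the convex combination. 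Tracking which face and node indices contribute to each of $L_A$, $L_B$, and $L_C$ is the most error-prone step.
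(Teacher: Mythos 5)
Your proposal is correct and follows essentially the same route as the paper's own proof: the paper likewise invokes the decomposition of $\overline{y}_{\kappa}^{j+1}$ into pointwise values and the three-point systems $A_{f,l}$, $B_{l}$, and $C$, notes that the constraints $\Delta t\lambda/|\kappa|\leq L_{A}/2$, $L_{B}/2$, $L_{C}/2$ place each system in $\mathcal{G}_{\sigma}$ via the invariant-region-preserving property, and concludes by the convexity of the combination in Equation~(\ref{eq:fully-discrete-form-average-2D}). Your reading of the prefactors that yield $L_{A}$, $L_{B}$, and $L_{C}$, and the role of the conditions~(\ref{eq:theta-conditions-2D}), matches the paper's argument; the only difference is that you spell out details the paper leaves implicit by reference to Part~I.
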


\begin{proof}
The proof follows similar logic to that of the one-dimensional version
in Part I~\citep[Theorem 1]{Chi22}. The constraint $\Delta t\lambda/|\kappa|\leq L_{A}/2$
ensures that $A_{f,l}$ (Equation~(\ref{eq:A_f_l-definition})) is
in $\mathcal{G}_{\sigma}$, the constraint $\Delta t\lambda/|\kappa|\leq L_{B}/2$
ensures that $B_{l}$ (Equation~(\ref{eq:B_l-definition-2})) is
in $\mathcal{G}_{\sigma}$, and the constraint $\Delta t\lambda/|\kappa|\leq L_{C}/2$
ensures that $C$ (Equation~(\ref{eq:C-definition})) is in $\mathcal{G}_{\sigma}$.
It follows from Equation~(\ref{eq:fully-discrete-form-average-2D})
that $\overline{y}_{\kappa}^{j+1}$ is in $\mathcal{G}_{\sigma}$.
\end{proof}
\begin{rem}
A direct result of Theorem~\ref{thm:CFL-condition-2D} is that
\[
s\left(\overline{y}_{\kappa}^{j+1}\right)\geq\min\left\{ \min\left\{ s\left(y_{\kappa}^{j}(x)\right)\vert x\in\mathcal{D_{\kappa}}\right\} ,\min\left\{ s\left(y_{\kappa}^{-,j}(x)\right)\vert x\in\mathcal{\partial D_{\kappa}}\right\} \right\} .
\]
\end{rem}

\begin{rem}
A simple linear-scaling limiter, described in Section~\ref{subsec:limiting-procedure},
is directly applied to enforce $y_{\kappa}^{j+1}(x)\in\mathcal{G}_{s_{b,\kappa}^{j+1}},\:\forall x\in\mathcal{D_{\kappa}}$,
where $s_{b}$ is a lower bound on the specific thermodynamic entropy.
Motivated by the minimum entropy principle~(\ref{eq:minimum-entropy-principle}),
$s_{b}$ is computed in this work in an element-local manner as
\begin{equation}
s_{b,\kappa}^{j+1}(y)=\min\left\{ s\left(y^{j}(x)\right)\left|x\in\bigcup_{f=1}^{n_{f}}\mathcal{D}_{\kappa^{(f)}}\bigcup\mathcal{D}_{\kappa}\right.\right\} ,\label{eq:local-entropy-bound}
\end{equation}
which is an approximation of the minimum entropy over $\kappa$ and
the neighboring elements. An alternative to the local entropy bound
in Equation~(\ref{eq:local-entropy-bound}) is the following global
entropy bound: 
\begin{equation}
s_{b}(y)=\min\left\{ s\left(y(x)\right)\vert x\in\bigcup_{\kappa\in\mathcal{T}}\mathcal{D_{\kappa}}\right\} .\label{eq:global-entropy-bound}
\end{equation}
It was demonstrated that the local entropy bound~(\ref{eq:local-entropy-bound})
can more effectively dampen nonlinear instabilities~\citep{Chi22},
particularly when the entropy varies signifcantly throughout the domain.
Additional information on these local and global entropy bounds can
be found in Part I~\citep{Chi22}. This completes the construction
of a positivity-preserving, entropy-bounded, high-order DG scheme.
\end{rem}

\begin{rem}
As discussed in Part I~\citep{Chi22}, in practice, we loosen some
of the above requirements. First, the maximum wave speed at a given
point is simply approximated as $\left|v\right|+c$, where $c$ is
the speed of sound. Simple algorithms to compute bounds on the wave
speeds exist for the monocomponent case~\citep{Gue16,Tor20}; extending
these to the multicomponent, thermally perfect case may indeed be
worthy of future investigation. A similar comment can be made for
most invariant-region-preserving numerical fluxes, which often require
wave-speed estimates. Second, we revise the definition of $\mathcal{G}_{\sigma}$
as
\begin{equation}
\mathcal{G}_{\sigma}=\left\{ y\mid\rho>0,\rho u^{*}>0,C_{1}\geq0,\ldots,C_{n_{s}}\geq0,\chi_{\sigma}\geq0\right\} ,
\end{equation}
where $\chi_{\sigma}=\rho s-\rho\sigma$ (introduced in~\citep{Jia18}),
which is concave, and the species concentrations are permitted to
be equal to zero, given that requiring only positive concentrations
would preclude the calculation of practical problems. However, it
is important to note that if $C_{i}=0$ for some $i$, the specific
thermodynamic entropy, $s$, becomes ill-defined and the entropy functions
$U=-\rho s$ and $U=-\rho f_{\epsilon}(s)$ lose convexity. Nevertheless,
by making use of $0\log0=0$~\citep[Chapter 6]{Gio99}, $\rho s$
remains well-defined. We did not face any issues associated with relaxing
these two requirements in this study.
\end{rem}

In the case that $\partial\mathcal{D}_{\kappa}\nsubseteq\left\{ x(\xi_{v}),v=1,\ldots,n_{q}\right\} $,
as described by Lv and Ihme~\citep{Lv15_2}, an optimization problem
can be solved for each element in a pre-processing step to maximize
the RHS of~(\ref{eq:CFL-condition-2D}). They also introduced another
simpler but more restrictive approach that reformulates the time-step-size
constraint as
\[
\frac{\Delta t\lambda}{\mathsf{L}}\leq\frac{1}{2}\mathsf{B},
\]
where $\mathsf{L}$ is a proposed length scale and $\mathsf{B}$ is
tabulated in~\citep{Lv15_2} for common element shapes and quadrature
rules. The constraint~(\ref{eq:CFL-condition-2D}) can be reformulated
in a similar manner, utilizing the values of $\mathsf{B}$ in~\citep{Lv15_2}.
\RevisionTextThree{These values of $\mathsf{B}$ naturally apply to the quadrature-free
approach~\citep{Atk96,Atk98} employed in this work since, as previously
mentioned, $\theta_{f,l}$ and $\nu_{f,l}^{\partial}$ can be treated
in the same manner between quadrature-free and quadrature-based approaches. } \RevisionTextThree{Here, instead of explicitly accounting for the constraint~(\ref{eq:CFL-condition-2D}),
we simply prescribe the time-step size according to a user-prescribed
CFL based on the acoustic time scale, which is often less restrictive
than the constraint~(\ref{eq:CFL-condition-2D}). }Note that the condition~(\ref{eq:CFL-condition-2D}) is sufficient
but not necessary for $\overline{y}_{\kappa}^{j+1}$ to be in $\mathcal{G}_{\sigma}$
. Therefore, as recommended by Zhang~\citep{Zha17}, larger time-step
sizes can be initially taken and, if $\overline{y}_{\kappa}^{j+1}\notin\mathcal{G}_{\sigma}$,
the time step can be restarted with smaller $\Delta t$. The condition~(\ref{eq:CFL-condition-2D})
excludes the possibility of infinite loops. \RevisionTextThree{In this study, this backtracking strategy is needed only during the
first time steps of the two-dimensional, $p=1$ and three-dimensional,
$p=2$ detonation-wave simulations in Sections~\ref{subsec:2D-detonation-wave}
and~\ref{subsec:3D-detonation-wave}.} \RevisionTextThree{We also neglect the artificial-viscosity formulation when determining
the time-step size. In principle, however, the artificial viscosity
may need to be bounded to maintain stability.}

If the faces are straight-sided (i.e., the surface Jacobian, $J_{\partial\kappa}^{(f)}$,
and the normal, $n$, are constant over any $\partial\kappa^{(f)}$,
for all $f$) and identical surface quadrature rules are employed
for each face, the time-step-size constraint~(\ref{eq:CFL-condition-2D})
can be simplified. Equation~(\ref{eq:fully-discrete-form-average-2D})
can instead be written as
\begin{eqnarray}
\overline{y}_{\kappa}^{j+1} & = & \overline{y}_{\kappa}^{j}-\sum_{f=1}^{n_{f}}\sum_{l=1}^{n_{q}^{\partial}}\frac{\Delta t\nu_{f,l}^{\partial}}{|\kappa|}\mathcal{F}^{\dagger}\left(y_{\kappa}^{j}\left(\xi\left(\zeta_{l}^{\left(f\right)}\right)\right),y_{\kappa^{(f)}}^{j}\left(\xi\left(\zeta_{l}^{\left(f\right)}\right)\right),n\left(\zeta_{l}^{\left(f\right)}\right)\right)\nonumber \\
 & = & \sum_{v=1}^{n_{q}}\theta_{v}y_{\kappa}^{j}\left(\xi_{v}\right)+\sum_{f=1}^{n_{f}}\sum_{l=1}^{n_{q}^{\partial}}\left[\theta_{f,l}y_{\kappa}^{j}\left(\xi\left(\zeta_{l}^{\left(f\right)}\right)\right)-\frac{\Delta t\nu_{f,l}^{\partial}}{|\kappa|}\mathcal{F}^{\dagger}\left(y_{\kappa}^{j}\left(\xi\left(\zeta_{l}^{\left(f\right)}\right)\right),y_{\kappa^{(f)}}^{j}\left(\xi\left(\zeta_{l}^{\left(f\right)}\right)\right),n\left(\zeta_{l}^{\left(f\right)}\right)\right)\right]\nonumber \\
 & = & \sum_{v=1}^{n_{q}}\theta_{v}y_{\kappa}^{j}\left(\xi_{v}\right)+\sum_{f=1}^{n_{f}-1}\sum_{l=1}^{n_{q}^{\partial}}\theta_{f,l}A_{f,l}+\sum_{l=1}^{n_{q}^{\partial}}\theta_{n_{f},l}B_{l},\label{eq:fully-discrete-form-average-2D-straight-sided}
\end{eqnarray}
where $\nu_{f,l}^{\partial}$ is now defined as
\[
\nu_{f,l}^{\partial}=\left|\partial\kappa^{(f)}\right|\widehat{w}_{l}^{\partial},
\]
with $\widehat{w}_{l}^{\partial}$ denoting normalized weights such
that $\sum_{l}\widehat{w}_{l}^{\partial}=1$. $A_{f,l}$ in Equation~(\ref{eq:fully-discrete-form-average-2D-straight-sided})
is still of the form~(\ref{eq:A_f_l-definition}) for $f=1,\ldots,n_{f}-1,\;l=1,\ldots,n_{q}^{\partial}$,
while $B_{l}$ is given by 
\begin{eqnarray*}
B_{l} & = & y_{\kappa}^{j}\left(\xi\left(\zeta_{l}^{\left(n_{f}\right)}\right)\right)\\
 &  & -\frac{\Delta t\nu_{n_{f},l}^{\partial}}{\theta_{n_{f},l}|\kappa|}\mathcal{F}^{\dagger}\left(y_{\kappa}^{j}\left(\xi\left(\zeta_{l}^{\left(n_{f}\right)}\right)\right),y_{\kappa^{\left(n_{f}\right)}}^{j}\left(\xi\left(\zeta_{l}^{\left(n_{f}\right)}\right)\right),n\left(\zeta_{l}^{\left(n_{f}\right)}\right)\right)\\
 &  & -\sum_{f=1}^{n_{f}-1}\frac{\Delta t\nu_{f,l}^{\partial}}{\theta_{n_{f},l}|\kappa|}\mathcal{F}^{\dagger}\left(y_{\kappa}^{j}\left(\xi\left(\zeta_{l}^{\left(n_{f}\right)}\right)\right),y_{\kappa}^{j}\left(\xi\left(\zeta_{l}^{\left(f\right)}\right)\right),n\left(\zeta_{l}^{\left(f\right)}\right)\right)
\end{eqnarray*}
for $l=1,\ldots,n_{q}^{\partial}$. With the identity
\begin{eqnarray*}
\sum_{f=1}^{n_{f}}\left|\partial\kappa^{(f)}\right|\mathcal{F}^{\dagger}\left(y_{\kappa}^{j}\left(\xi\left(\zeta_{l}^{\left(n_{f}\right)}\right)\right),y_{\kappa}^{j}\left(\xi\left(\zeta_{l}^{\left(n_{f}\right)}\right)\right),n\left(\zeta_{l}^{\left(f\right)}\right)\right) & = & \sum_{f=1}^{n_{f}}\left|\partial\kappa^{(f)}\right|\mathcal{F}\left(y_{\kappa}^{j}\left(\xi\left(\zeta_{l}^{\left(n_{f}\right)}\right)\right)\right)\cdot n\left(\zeta_{l}^{\left(f\right)}\right)\\
 & = & \int_{\partial\kappa}\mathcal{F}^{\dagger}\left(y_{\kappa}^{j}\left(\xi\left(\zeta_{l}^{\left(n_{f}\right)}\right)\right)\right)\cdot nds\\
 & = & \int_{\kappa}\nabla\cdot\mathcal{F}^{\dagger}\left(y_{\kappa}^{j}\left(\xi\left(\zeta_{l}^{\left(n_{f}\right)}\right)\right)\right)dx\\
 & = & 0,
\end{eqnarray*}
$B_{l}$ can be rewritten as
\begin{eqnarray*}
B_{l} & = & \frac{\left|\partial\kappa^{(n_{f})}\right|}{|\partial\kappa|}\Biggl\{ y_{\kappa}^{j}\left(\xi\left(\zeta_{l}^{\left(n_{f}\right)}\right)\right)-\frac{\Delta t\widehat{w}_{l}^{\partial}|\partial\kappa|}{\theta_{n_{f},l}|\kappa|}\left[\mathcal{F}^{\dagger}\left(y_{\kappa}^{j}\left(\xi\left(\zeta_{l}^{\left(n_{f}\right)}\right)\right),y_{\kappa^{\left(n_{f}\right)}}^{j}\left(\xi\left(\zeta_{l}^{\left(n_{f}\right)}\right)\right),n\left(\zeta_{l}^{\left(n_{f}\right)}\right)\right)\right.\\
 &  & +\mathcal{F}^{\dagger}\left(y_{\kappa}^{j}\left(\xi\left(\zeta_{l}^{\left(n_{f}\right)}\right)\right),y_{\kappa}^{j}\left(\xi\left(\zeta_{l}^{\left(n_{f}\right)}\right)\right),-n\left(\zeta_{l}^{\left(n_{f}\right)}\right)\right)\biggr]\Biggr\}\\
 &  & +\sum_{f=1}^{n_{f}-1}\frac{\left|\partial\kappa^{(f)}\right|}{|\partial\kappa|}\Biggl\{ y_{\kappa}^{j}\left(\xi\left(\zeta_{l}^{\left(n_{f}\right)}\right)\right)-\frac{\Delta t\widehat{w}_{l}^{\partial}|\partial\kappa|}{\theta_{n_{f},l}|\kappa|}\left[\mathcal{F}^{\dagger}\left(y_{\kappa}^{j}\left(\xi\left(\zeta_{l}^{\left(n_{f}\right)}\right)\right),y_{\kappa}^{j}\left(\xi\left(\zeta_{l}^{\left(f\right)}\right)\right),n\left(\zeta_{l}^{\left(f\right)}\right)\right)\right.\\
 &  & \left.+\mathcal{F}^{\dagger}\left(y_{\kappa}^{j}\left(\xi\left(\zeta_{l}^{\left(n_{f}\right)}\right)\right),y_{\kappa}^{j}\left(\xi\left(\zeta_{l}^{\left(n_{f}\right)}\right)\right),-n\left(\zeta_{l}^{\left(f\right)}\right)\right)\right]\Biggr\}.
\end{eqnarray*}
$B_{l}$ is therefore a convex combination of three-point systems.
The time-step-size constraint can then be modified as
\begin{align}
\frac{\Delta t\lambda}{|\kappa|} & \leq\frac{1}{2}\min\left\{ L_{A},L_{B}\right\} ,\label{eq:CFL-condition-2D-straight-sided}\\
L_{A} & =\min\left\{ \left.\frac{\theta_{f,l}}{\left|\partial\kappa^{(f)}\right|\widehat{w}_{l}^{\partial}}\right|f=1,\ldots,n_{f}-1,\;l=1,\ldots,n_{q}^{\partial}\right\} ,\nonumber \\
L_{B} & =\min\left\{ \left.\left.\frac{\theta_{n_{f},l}}{\left|\partial\kappa\right|\widehat{w}_{l}^{\partial}}\right|\right|,f=1,\ldots,n_{f},\;l=1,\ldots,n_{q}^{\partial}\right\} ,\nonumber 
\end{align}
which is similar to that in~\citep{Jia18}.

\subsubsection{Limiting procedure\label{subsec:limiting-procedure}}

The limiting procedure to enforce $y_{\kappa}^{j+1}(x)\in\mathcal{G}_{s_{b}},\:\forall x\in\mathcal{D_{\kappa}}$,
is the same as in Part I. For completeness, we summarize it here.
The $j$+1 superscript and $\kappa$ subscript are dropped for brevity.
$\overline{y}_{\kappa}^{j}(x)$ is assumed to be in $\mathcal{G}_{s_{b}}$.
The limiting operator is of the same form as in \citep{Wan12}, \citep{Zha17},
\citep{Jia18}, \citep{Wu21_2}, and related papers.
\begin{enumerate}
\item Positive density: if $\rho(x)>\epsilon,\:\forall x\in\mathcal{D}_{\kappa}$,
where $\epsilon>0$ is a small number (e.g., $\epsilon=10^{-10}$),
then set $C_{i}^{(1)}=C_{i}=\sum_{j=1}^{n_{b}}C_{i}(x_{j})\phi_{j},i=1,\ldots,n_{s}$;
otherwise, compute
\[
C_{i}^{(1)}=\overline{C}_{i}+\theta^{(1)}\left(C_{i}-\overline{C}_{i}\right),\quad\theta^{(1)}=\frac{\rho(\overline{y})-\epsilon}{\rho(\overline{y})-\underset{x\in\mathcal{D}}{\min}\rho(y(x))}.
\]
for $i=1,\ldots,n_{s}.$
\item Nonnegative concentrations: if $C_{i}^{(1)}(x)\geq0,\:\forall x\in\mathcal{D}_{\kappa}$,
then set $C_{i}^{(2)}=C_{i}^{(1)},i=1,\ldots,n_{s}$; otherwise, compute
\[
C_{i}^{(2)}=\overline{C}_{i}+\theta^{(2)}\left(C_{i}^{(1)}-\overline{C}_{i}\right),\quad\theta^{(2)}=\frac{\overline{C}_{i}}{\overline{C}_{i}-\underset{x\in\mathcal{D}}{\min}C_{i}^{(1)}(x)}.
\]
Let $y^{(2)}=\left(\rho v_{1},\ldots,\rho v_{d},\rho e_{t},C_{1}^{(2)},\ldots,C_{n_{s}}^{(2)}\right)$. 
\item Positive temperature: if $\rho u^{*}\left(y^{(2)}(x)\right)>\epsilon,\:\forall x\in\mathcal{D}_{\kappa}$,
then set $y^{(3)}=y^{(2)}$; otherwise, compute
\[
y^{(3)}=\overline{y}+\theta^{(3)}\left(y^{(2)}-\overline{y}\right),\quad\theta^{(3)}=\frac{\rho u^{*}(\overline{y})-\epsilon}{\rho u^{*}(\overline{y})-\underset{x\in\mathcal{D}}{\min}\rho u^{*}(y^{(2)}(x))}.
\]
The ``positivity-preserving limiter'' refers to the limiting procedure
up to this point. 
\item Entropy constraint: if $\chi\left(y^{(3)}(x)\right)\geq0,\:\forall x\in\mathcal{D}_{\kappa}$,
then set $y^{(4)}=y^{(3)}$; otherwise, compute
\[
y^{(4)}=\overline{y}+\theta^{(4)}\left(y^{(3)}-\overline{y}\right),\quad\theta^{(4)}=\frac{\chi(\overline{y})}{\chi(\overline{y})-\underset{x\in\mathcal{D}}{\min}\chi(y^{(3)}(x))}.
\]
The ``entropy limiter'' refers to the limiting procedure up to this
point.
\end{enumerate}
$y^{(4)}$ then replaces $y$ as the solution. This limiting procedure
is applied at the end of every RK stage. It is conservative and in
general preserves the formal order of accuracy for smooth solutions~\citep{Zha10,Zha17,Zha12_2,Lv15_2,Jia18}.
\RevisionTextThree{Note that Step 1 may seem superfluous given Step 2; however, consider
a two-species flow where both species concentrations are negative
at the same point in $\mathcal{D}_{\kappa}$. If Step 1 is neglected,
Step 2 would limit the concentrations at that point to zero, resulting
in zero density.}

\subsubsection{Modified flux interpolation\label{subsec:modified-flux-interpolation-2D}}

We now discuss how to account for over-integration with the modified
flux interpolation in Equation~(\ref{eq:modified-flux-projection}).
The scheme satisfied by the element averages becomes
\begin{eqnarray}
\overline{y}_{\kappa}^{j+1} & = & \overline{y}_{\kappa}^{j}-\sum_{f=1}^{n_{f}}\sum_{l=1}^{n_{q,f}^{\partial}}\frac{\Delta t\nu_{f,l}^{\partial}}{|\kappa|}\mathcal{F}^{\dagger}\left(\widetilde{y}_{\kappa}^{j}\left(\xi\left(\zeta_{l}^{\left(f\right)}\right)\right),\widetilde{y}_{\kappa^{(k)}}^{j}\left(\xi\left(\zeta_{l}^{\left(f\right)}\right)\right),n\left(\zeta_{l}^{\left(f\right)}\right)\right)\nonumber \\
 & = & \sum_{v=1}^{n_{q}}\theta_{v}y_{\kappa}^{j}\left(\xi_{v}\right)+\sum_{f=1}^{n_{f}}\sum_{l=1}^{n_{q,f}^{\partial}}\left[\theta_{f,l}y_{\kappa}^{j}\left(\xi\left(\zeta_{l}^{\left(f\right)}\right)\right)-\frac{\Delta t\nu_{f,l}^{\partial}}{|\kappa|}\mathcal{F}^{\dagger}\left(\widetilde{y}_{\kappa}^{j}\left(\xi\left(\zeta_{l}^{\left(f\right)}\right)\right),\widetilde{y}_{\kappa^{(f)}}^{j}\left(\xi\left(\zeta_{l}^{\left(f\right)}\right)\right),n\left(\zeta_{l}^{\left(f\right)}\right)\right)\right]\nonumber \\
 & = & \sum_{v=1}^{n_{q}}\theta_{v}y_{\kappa}^{j}\left(\xi_{v}\right)+\sum_{f=1}^{n_{f}-1}\sum_{l=1}^{n_{q,f}^{\partial}}\theta_{f,l}\widetilde{A}_{f,l}+\sum_{l=1}^{N-1}\theta_{n_{f},l}\widetilde{B}_{l}+\theta_{n_{f},N}\widetilde{C}.\label{eq:fully-discrete-form-average-2D-modified}
\end{eqnarray}
Following the same steps as in Section~\ref{subsec:entropy-bounded-high-order-DG-2D},
$\widetilde{A}_{f,l}$, $\widetilde{B}_{l}$, and $\widetilde{C}$
can be written as
\begin{eqnarray}
\widetilde{A}_{f,l} & = & y_{\kappa}^{j}\left(\xi\left(\zeta_{l}^{(f)}\right)\right)-\frac{\Delta t\nu_{f,l}^{\partial}}{\theta_{f,l}|\kappa|}\left[\mathcal{F}^{\dagger}\left(\widetilde{y}_{\kappa}^{j}\left(\xi\left(\zeta_{l}^{(f)}\right)\right),\widetilde{y}_{\kappa^{(f)}}^{j}\left(\xi\left(\zeta_{l}^{(f)}\right)\right),n\left(\zeta_{l}^{(f)}\right)\right)\right.\nonumber \\
 &  & \left.+\mathcal{F}^{\dagger}\left(\widetilde{y}_{\kappa}^{j}\left(\xi\left(\zeta_{l}^{\left(f\right)}\right)\right),\widetilde{y}_{\kappa}^{j}\left(\xi\left(\zeta_{l}^{\left(n_{f}\right)}\right)\right),-n\left(\zeta_{l}^{(f)}\right)\right)\right],\label{eq:A_f_l-definition-tilde}
\end{eqnarray}
\begin{eqnarray}
\widetilde{B}_{l} & = & \frac{\left|\partial\kappa^{(n_{f})}\right|}{|\partial\kappa|}\Biggl\{ y_{\kappa}^{j}\left(\xi\left(\zeta_{l}^{\left(n_{f}\right)}\right)\right)-\frac{\Delta t\nu_{n_{f},l}^{\partial}|\partial\kappa|}{\theta_{n_{f},l}|\kappa|\left|\partial\kappa^{(n_{f})}\right|}\left[\mathcal{F}^{\dagger}\left(\widetilde{y}_{\kappa}^{j}\left(\xi\left(\zeta_{l}^{\left(n_{f}\right)}\right)\right),\widetilde{y}_{\kappa^{\left(n_{f}\right)}}^{j}\left(\xi\left(\zeta_{l}^{\left(n_{f}\right)}\right)\right),n\left(\zeta_{l}^{\left(n_{f}\right)}\right)\right)\right.\nonumber \\
 &  & +\mathcal{F}^{\dagger}\left(\widetilde{y}_{\kappa}^{j}\left(\xi\left(\zeta_{l}^{\left(n_{f}\right)}\right)\right),\widetilde{y}_{\kappa}^{j}\left(\xi\left(\zeta_{N}^{\left(n_{f}\right)}\right)\right),-n\left(\zeta_{l}^{\left(n_{f}\right)}\right)\right)\biggr]\Biggr\}\nonumber \\
 &  & +\sum_{f=1}^{n_{f}-1}\frac{\left|\partial\kappa^{(f)}\right|}{|\partial\kappa|}\Biggl\{ y_{\kappa}^{j}\left(\xi\left(\zeta_{l}^{\left(n_{f}\right)}\right)\right)-\frac{\Delta t\nu_{f,l}^{\partial}|\partial\kappa|}{\theta_{n_{f},l}|\kappa|\left|\partial\kappa^{(f)}\right|}\left[\mathcal{F}^{\dagger}\left(\widetilde{y}_{\kappa}^{j}\left(\xi\left(\zeta_{l}^{\left(n_{f}\right)}\right)\right),\widetilde{y}_{\kappa}^{j}\left(\xi\left(\zeta_{l}^{\left(f\right)}\right)\right),n\left(\zeta_{l}^{\left(f\right)}\right)\right)\right.\nonumber \\
 &  & \left.+\mathcal{F}^{\dagger}\left(\widetilde{y}_{\kappa}^{j}\left(\xi\left(\zeta_{l}^{\left(n_{f}\right)}\right)\right),\widetilde{y}_{\kappa}^{j}\left(\xi\left(\zeta_{N}^{\left(n_{f}\right)}\right)\right),-n\left(\zeta_{l}^{\left(f\right)}\right)\right)\right]\Biggr\},\label{eq:B_l-definition-2-tilde}
\end{eqnarray}
and
\begin{eqnarray*}
\widetilde{C} & = & \frac{\nu_{n_{f},N}^{\partial}}{|\partial\kappa|}\Biggl\{ y_{\kappa}^{j}\left(\xi\left(\zeta_{N}^{\left(n_{f}\right)}\right)\right)-\frac{\Delta t|\partial\kappa|}{\theta_{n_{f},N}|\kappa|}\left[\mathcal{F}^{\dagger}\left(\widetilde{y}_{\kappa}^{j}\left(\xi\left(\zeta_{N}^{\left(n_{f}\right)}\right)\right),\widetilde{y}_{\kappa^{\left(n_{f}\right)}}^{j}\left(\xi\left(\zeta_{N}^{\left(n_{f}\right)}\right)\right),n\left(\zeta_{N}^{\left(n_{f}\right)}\right)\right)\right.\\
 &  & +\mathcal{F}^{\dagger}\left(\widetilde{y}_{\kappa}^{j}\left(\xi\left(\zeta_{N}^{\left(n_{f}\right)}\right)\right),\widetilde{y}_{\kappa}^{j}\left(\xi\left(\zeta_{N}^{\left(n_{f}\right)}\right)\right),-n\left(\zeta_{N}^{\left(n_{f}\right)}\right)\right)\biggr]\Biggr\}\\
 &  & +\sum_{l=1}^{N-1}\frac{\nu_{n_{f},l}^{\partial}}{|\partial\kappa|}\Biggl\{ y_{\kappa}^{j}\left(\xi\left(\zeta_{N}^{\left(n_{f}\right)}\right)\right)-\frac{\Delta t|\partial\kappa|}{\theta_{n_{f},N}|\kappa|}\left[\mathcal{F}^{\dagger}\left(\widetilde{y}_{\kappa}^{j}\left(\xi\left(\zeta_{N}^{\left(n_{f}\right)}\right)\right),\widetilde{y}_{\kappa}^{j}\left(\xi\left(\zeta_{l}^{\left(n_{f}\right)}\right)\right),n\left(\zeta_{l}^{\left(n_{f}\right)}\right)\right)\right.\\
 &  & \left.+\mathcal{F}^{\dagger}\left(\widetilde{y}_{\kappa}^{j}\left(\xi\left(\zeta_{N}^{\left(n_{f}\right)}\right)\right),\widetilde{y}_{\kappa}^{j}\left(\xi\left(\zeta_{N}^{\left(n_{f}\right)}\right)\right),-n\left(\zeta_{l}^{\left(n_{f}\right)}\right)\right)\right]\Biggr\}\\
 &  & +\sum_{f=1}^{n_{f}-1}\frac{\nu_{f,N}^{\partial}}{|\partial\kappa|}\Biggl\{ y_{\kappa}^{j}\left(\xi\left(\zeta_{N}^{\left(n_{f}\right)}\right)\right)-\frac{\Delta t|\partial\kappa|}{\theta_{n_{f},N}|\kappa|}\left[\mathcal{F}^{\dagger}\left(\widetilde{y}_{\kappa}^{j}\left(\xi\left(\zeta_{N}^{\left(n_{f}\right)}\right)\right),\widetilde{y}_{\kappa}^{j}\left(\xi\left(\zeta_{N}^{\left(f\right)}\right)\right),n\left(\zeta_{N}^{\left(f\right)}\right)\right)\right.\\
 &  & \left.+\mathcal{F}^{\dagger}\left(\widetilde{y}_{\kappa}^{j}\left(\xi\left(\zeta_{N}^{\left(n_{f}\right)}\right)\right),\widetilde{y}_{\kappa}^{j}\left(\xi\left(\zeta_{N}^{\left(n_{f}\right)}\right)\right),-n\left(\zeta_{N}^{\left(f\right)}\right)\right)\right]\Biggr\}\\
 &  & +\sum_{f=1}^{n_{f}-1}\sum_{l=1}^{N-1}\frac{\nu_{f,l}^{\partial}}{|\partial\kappa|}\Biggl\{ y_{\kappa}^{j}\left(\xi\left(\zeta_{N}^{\left(n_{f}\right)}\right)\right)-\frac{\Delta t|\partial\kappa|}{\theta_{n_{f},N}|\kappa|}\left[\mathcal{F}^{\dagger}\left(\widetilde{y}_{\kappa}^{j}\left(\xi\left(\zeta_{N}^{\left(n_{f}\right)}\right)\right),\widetilde{y}_{\kappa}^{j}\left(\xi\left(\zeta_{l}^{\left(n_{f}\right)}\right)\right),n\left(\xi\left(\zeta_{l}^{\left(f\right)}\right)\right)\right)\right.\\
 &  & \left.+\mathcal{F}^{\dagger}\left(\widetilde{y}_{\kappa}^{j}\left(\xi\left(\zeta_{N}^{\left(n_{f}\right)}\right)\right),\widetilde{y}_{\kappa}^{j}\left(\xi\left(\zeta_{N}^{\left(n_{f}\right)}\right)\right),-n\left(\xi\left(\zeta_{l}^{\left(f\right)}\right)\right)\right)\right].
\end{eqnarray*}
Unfortunately, the corresponding three-point systems are not necessarily
of the type~(\ref{eq:three-point-system-2D}) since in general, $y_{\kappa}^{j}\left(\xi\left(\zeta_{l}^{\left(f\right)}\right)\right)\neq\widetilde{y}_{\kappa}^{j}\left(\xi\left(\zeta_{l}^{\left(f\right)}\right)\right)$.
The incompatibility is a result of expressing $\overline{y}_{\kappa}$
as a convex combination of pointwise values of $y_{\kappa}(x)$ (as
opposed to $\widetilde{y}_{\kappa}(x)$). Since the element average
of $\widetilde{y}_{\kappa}$, denoted $\overline{\widetilde{y}}_{\kappa}$,
is not necessarily equal to $\overline{y}_{\kappa}$, $\overline{y}_{\kappa}$
cannot be directly written as a convex combination of pointwise values
of $\widetilde{y}_{\kappa}(x)$. In the one-dimensional case, if the
nodal set includes the endpoints, then this issue is circumvented
since $y_{\kappa}^{j}(x_{L})=\widetilde{y}_{\kappa}^{j}(x_{L})$ and
$y_{\kappa}^{j}(x_{R})=\widetilde{y}_{\kappa}^{j}(x_{R})$. However,
the multidimensional case is more complicated. One simple approach,
assuming that the nodal set includes surface points that can be used
for integration, is as follows:
\begin{itemize}
\item Compute $y_{\kappa}^{j+1}$ using over-integration with the modified
flux interpolation~(\ref{eq:modified-flux-projection}).
\item If $\overline{y}_{\kappa}^{j+1}\in\mathcal{G}_{s_{b}}$, then proceed
to the next time step. This will typically be true since in general,
$y_{\kappa}^{j}\left(x^{(f)}\left(\zeta_{l}\right)\right)\approx\widetilde{y}_{\kappa}^{j}\left(x^{(f)}\left(\zeta_{l}\right)\right)$
and the conditions laid out in Section~\ref{subsec:entropy-bounded-high-order-DG-2D}
are not necessary for $\overline{y}_{\kappa}^{j+1}$ to be in $\mathcal{G}_{s_{b}}$.
\item In the rare case that $\overline{y}_{\kappa}^{j+1}\notin\mathcal{G}_{s_{b}}$,
recompute $y_{\kappa}^{j+1}$ with integration points that are in
the nodal set, which effectively maintains pressure equilibrium. $\overline{y}_{\kappa}^{j+1}$
is then guaranteed to be in $\mathcal{G}_{s_{b}}$ since $y_{\kappa}=\widetilde{y}_{\kappa}$
at the solution nodes.
\end{itemize}
Note that this would only need to be done for the surface integrals;
since the second term in Equation~(\ref{eq:semi-discrete-form})
(i.e., the volumetric flux integral) does not factor into the scheme
satisfied by the element averages, over-integration with the modified
flux interpolation can be freely employed in said integral. Furthermore,
if $\overline{y}_{\kappa}^{j+1}$ satisfies the positivity property
but $s\left(\overline{y}_{\kappa}^{j+1}\right)<s_{b}$ (and therefore
$\overline{y}_{\kappa}^{j+1}\notin\mathcal{G}_{s_{b}}$), then it
may still be reasonable to proceed to the next time step, provided
that $s\left(y_{\kappa}^{j}(x)\right)\not\ll s_{b},\:\forall x\in\mathcal{D_{\kappa}}$
\RevisionTextThree{(e.g., $s\left(y_{\kappa}^{j}(x)\right)\not<0.95s_{b},\:\forall x\in\mathcal{D_{\kappa}}$)}.
\RevisionTextThree{In Appendix~\ref{sec:Effect-of-over-integration}, we investigate
the effect of over-integation on the frequency of limiter activation
in a one-dimensional test case involving the advection of a low-density
Gaussian wave.}

It should also be noted that using over-integration with the modified
flux interpolation~(\ref{eq:modified-flux-projection}), $\overline{y}_{\kappa}^{j+1}$
is guaranteed to at least satisfy the positivity property under a
different time-step-size constraint. To show this, let $\mathcal{G}$
denote the set
\[
\mathcal{G}=\left\{ y\mid C_{1}\geq0,\ldots,C_{n_{s}}\geq0,\rho>0,\rho u^{*}>0\right\} ,
\]
and, as a representative example, rewrite Equation~(\ref{eq:A_f_l-definition-tilde})
as 
\begin{eqnarray*}
\widetilde{A}_{f,l} & = & \acute{y}_{\kappa}^{j}\left(\xi\left(\zeta_{l}^{(f)}\right)\right)-\frac{\Delta t\nu_{f,l}^{\partial}}{\theta_{f,l}|\kappa|}\Delta\mathcal{F}\left(\xi\left(\zeta_{l}^{(f)}\right)\right),
\end{eqnarray*}
where 
\begin{align*}
\acute{y}_{\kappa}^{j}\left(\xi\left(\zeta_{l}^{(f)}\right)\right)= & y_{\kappa}^{j}\left(\xi\left(\zeta_{l}^{(f)}\right)\right)-\frac{\Delta t\nu_{f,l}^{\partial}}{\theta_{f,l}|\kappa|}\left[\mathcal{F}^{\dagger}\left(y_{\kappa}^{j}\left(\xi\left(\zeta_{l}^{(f)}\right)\right),y_{\kappa^{(f)}}^{j}\left(\xi\left(\zeta_{l}^{(f)}\right)\right),n\left(\zeta_{l}^{(f)}\right)\right)\right.\\
 & \left.+\mathcal{F}^{\dagger}\left(y_{\kappa}^{j}\left(\xi\left(\zeta_{l}^{\left(f\right)}\right)\right),y_{\kappa}^{j}\left(\xi\left(\zeta_{l}^{\left(n_{f}\right)}\right)\right),-n\left(\zeta_{l}^{(f)}\right)\right)\right]
\end{align*}
\begin{align*}
\Delta\mathcal{F}\left(\xi\left(\zeta_{l}^{(f)}\right)\right)= & \mathcal{F}^{\dagger}\left(\widetilde{y}_{\kappa}^{j}\left(\xi\left(\zeta_{l}^{(f)}\right)\right),\widetilde{y}_{\kappa^{(f)}}^{j}\left(\xi\left(\zeta_{l}^{(f)}\right)\right),n\left(\zeta_{l}^{(f)}\right)\right)\\
 & -\mathcal{F}^{\dagger}\left(y_{\kappa}^{j}\left(\xi\left(\zeta_{l}^{(f)}\right)\right),y_{\kappa^{(f)}}^{j}\left(\xi\left(\zeta_{l}^{(f)}\right)\right),n\left(\zeta_{l}^{(f)}\right)\right)\\
 & +\mathcal{F}^{\dagger}\left(\widetilde{y}_{\kappa}^{j}\left(\xi\left(\zeta_{l}^{\left(f\right)}\right)\right),\widetilde{y}_{\kappa}^{j}\left(\xi\left(\zeta_{l}^{\left(n_{f}\right)}\right)\right),-n\left(\zeta_{l}^{(f)}\right)\right)\\
 & -\mathcal{F}^{\dagger}\left(y_{\kappa}^{j}\left(\xi\left(\zeta_{l}^{\left(f\right)}\right)\right),y_{\kappa}^{j}\left(\xi\left(\zeta_{l}^{\left(n_{f}\right)}\right)\right),-n\left(\zeta_{l}^{(f)}\right)\right).
\end{align*}
$\acute{y}_{\kappa}^{j}\left(\xi\left(\zeta_{l}^{(f)}\right)\right)$
is expressed as a three-point system of the type~(\ref{eq:three-point-system-2D})
and is therefore in $\mathcal{G}$ under the constraint~(\ref{eq:CFL-condition-2D}).
Then, according to Lemma~(\ref{lem:alpha-constraints}) in Appendix~\ref{sec:supporting-lemma},
$\widetilde{A}_{f,l}$ is also in $\mathcal{G}$ if
\begin{equation}
\frac{\Delta t}{\left|\kappa\right|}<\frac{\theta_{f,l}}{\nu_{f,l}^{\partial}\alpha^{*}\left(\acute{y}_{\kappa}^{j}\left(\xi\left(\zeta_{l}^{\left(f\right)}\right)\right),\Delta\mathcal{F}\left(\xi\left(\zeta_{l}^{\left(f\right)}\right)\right)\right)},\label{eq:dt-alpha-constraint}
\end{equation}
where $\alpha^{*}$ is defined as in~(\ref{eq:alpha-constraint}).
Note that since the modified flux interpolation~(\ref{eq:modified-flux-projection})
does not alter species concentrations or momentum, $\alpha^{*}$ can
actually be simplified as in~\ref{rem:simpler-alpha-constraint}.
In general, since $y_{\kappa}^{j}\left(x^{(f)}\left(\zeta_{l}\right)\right)\approx\widetilde{y}_{\kappa}^{j}\left(x^{(f)}\left(\zeta_{l}\right)\right)$,
$\Delta\mathcal{F}$ and thus $\alpha^{*}$ are small, such that the
constraint~(\ref{eq:dt-alpha-constraint}) is not restrictive. The
same analysis can be performed for $\widetilde{B}_{l}$ and $C$ to
yield similar constraints on $\Delta t$. In practice, we do not find
it necessary to explicitly account for these additional constraints.

Though not pursued in this work, in Appendix~\ref{sec:alternative-approach-with-over-integration},
we present an alternative approach compatible with over-integration.
Said approach utilizes an additional auxiliary polynomial to guarantee
$\overline{y}_{\kappa}^{j+1}\in\mathcal{G}_{s_{b}}$ while employing
the modified flux interpolation~(\ref{eq:modified-flux-projection}).

\section{Reaction step}

In this section, we briefly discuss the reaction step, most of which
is independent of the number of spatial dimensions. Only the multidimensional
considerations are detailed here; the reader is referred to Part I~\citep{Chi22}
for more information.

The element-local, semi-discrete form of Equation~(\ref{eq:strang-splitting-2})
is written as
\begin{gather}
\int_{\kappa}\mathfrak{v}^{T}\frac{\partial y}{\partial t}dx-\int_{\kappa}\mathfrak{v}^{T}\mathcal{S}(y)dx=0,\;\forall\mathfrak{v}\in V_{h}^{p}.\label{eq:semi-discrete-form-ode}
\end{gather}
We approximate $\mathcal{S}\left(y\right)$ locally as a polynomial
in $V_{h}^{p}$ as
\[
\mathcal{S}_{\kappa}\approx\sum_{j=1}^{n_{b}}\mathcal{S}\left(y\left(x_{j}\right)\right)\phi_{j},
\]
giving the following spatially decoupled system of ODEs advanced at
the solution nodes from $t=t_{0}$ to $t=t_{f}$:
\[
\frac{d}{dt}y_{\kappa}\left(x_{j},t\right)-\mathcal{S}\left(y_{\kappa}\left(x_{j},t\right)\right)=0,\quad j=1,\ldots,n_{b}.
\]
Suppose it can be guaranteed that 
\begin{align}
y_{\kappa}\left(x_{j},t_{f}\right) & \in\mathcal{G}_{s\left(y_{\kappa}\left(x_{j},t_{0}\right)\right)},\quad j=1,\ldots,n_{b}.\label{eq:dgode-nodal-solution-in-G}
\end{align}
With $s_{b}$ now given by
\[
s_{b}=\min_{j=1,\ldots,n_{b}}s\left(y_{\kappa}\left(x_{j},t_{0}\right)\right),
\]
$\overline{y}_{\kappa}(t_{f})$ is in $\mathcal{G}_{s_{b}}$ under
the following two conditions:
\begin{itemize}
\item The nodal set corresponds to the quadrature points of a rule with
positive weights (e.g., Gauss-Lobatto nodes).
\item Said quadrature rule is sufficiently accurate to compute $\overline{y}_{\kappa}$.
\end{itemize}
With $\overline{y}_{\kappa}(t_{f})\in\mathcal{G}_{s_{b}}$, the limiting
procedure in Section~\ref{subsec:limiting-procedure} can then be
applied to enforce $y_{\kappa}\left(x,t_{f}\right)\in\mathcal{G}_{s_{b}},\:\forall x\in\mathcal{D}_{\kappa}$
(unless $\mathcal{D}_{\kappa}=\left\{ x_{j},j=1,\ldots,n_{b}\right\} $,
in which case the limiting procedure is superfluous). However, if
the geometric Jacobian is not constant, the quadrature rule may no
longer be sufficiently accurate to compute $\overline{y}_{\kappa}$\RevisionTextThree{; therefore, even if~(\ref{eq:dgode-nodal-solution-in-G}) is satisfied,
$\overline{y}_{\kappa}(t_{f})$ may not be in $\mathcal{G}_{s_{b}}$.
One approach to guarantee $\overline{y}_{\kappa}(t_{f})\in\mathcal{G}_{s_{b}}$} is to first expand $y_{\kappa}$ as
\[
y_{\kappa}=\sum_{i=1}^{n_{a}}y_{\kappa}(x_{i})\Psi_{i},
\]
where $\left\{ \Psi_{1},\ldots,\Psi_{n_{a}}\right\} $ is a basis
of $V_{h}^{\widehat{p}}$, with $n_{a}>n_{b}$ and $\widehat{p}>p$,
and then solve
\begin{gather}
\int_{\kappa}\mathfrak{v}^{T}\frac{\partial y}{\partial t}dx-\int_{\kappa}\mathfrak{v}^{T}\mathcal{S}(y)dx=0,\;\forall\mathfrak{v}\in V_{h}^{\widehat{p}}\label{eq:semi-discrete-form-ode-p-hat}
\end{gather}
for $y\in V_{h}^{\widehat{p}}$. Following the same procedure as above,
the resulting spatially decoupled system of ODEs is
\begin{equation}
\frac{d}{dt}y_{\kappa}\left(x_{i},t\right)-\mathcal{S}\left(y_{\kappa}\left(x_{i},t\right)\right)=0,\quad i=1,\ldots,n_{a}.\label{eq:spatially_decoupled_ODEs_p_hat}
\end{equation}
Finally, $L^{2}$ projection is applied to project $y$ from $V_{h}^{\widehat{p}}$
to $V_{h}^{p}$. Since $L^{2}$ projection is a conservative operation,
$\overline{y}_{\kappa}(t_{f})\in\mathcal{G}_{s_{b}}$, where $s_{b}$
is now given by
\[
s_{b}=\min_{i=1,\ldots,n_{a}}s\left(y_{\kappa}\left(x_{i},t_{0}\right)\right).
\]
The limiter can then be employed to enforce $y_{\kappa}\left(x,t_{f}\right)\in\mathcal{G}_{s_{b}},\:\forall x\in\mathcal{D}_{\kappa}$,
unless $\mathcal{D}_{\kappa}=\left\{ x_{i},i=1,\ldots,n_{a}\right\} $.
Note that in~\citep{Lv15} and~\citep{Ban20}, slightly modified
ODEs are similarly solved at a generic set of quadrature points.

Each system of ODEs is solved \RevisionTextThree{in a spatially decoupled manner}
using a DG discretization in time. \RevisionTextThree{$h$-adaptivity in time is employed to ensure that the state at a
given point is in $\mathcal{G}_{s_{b}}$, with a maximum of ten Newton
iterations used to solve the nonlinear system at each sub-time-step. }Since the remainder of the reaction step is identical to that in the
one-dimensional case, we refer the reader to Part I~\citep{Chi22}
for further details on the temporal discretization.

\section{Multidimensional results\label{sec:results-2D}}

First, we compute two-dimensional thermal-bubble advection to assess
the ability of the proposed formulation to maintain pressure equilibrium
on curved grids. Next, we present solutions to a two-dimensional moving
detonation on a series of increasingly refined meshes over a range
of polynomial orders. Finally, we present the solution to a three-dimensional,
large-scale, moving detonation wave in order to demonstrate the utility
of the proposed formulation. The SSPRK3 time integration scheme~\citep{Got01,Spi02}
is used in Section~\ref{subsec:thermal-bubble}, while the SSPRK2
scheme is used in Sections~\ref{subsec:2D-detonation-wave} and~\ref{subsec:3D-detonation-wave}.
All simulations are performed using a modified version of the JENRE\textregistered~Multiphysics
Framework~\citep{Cor18_SCITECH,Joh20_2} that incorporates the developments
and extensions described in this work.

\subsection{Two-dimensional thermal-bubble advection\label{subsec:thermal-bubble}}

\RevisionTextOne{This flow configuration is a two-dimensional version of the one-dimensional
thermal-bubble advection we computed in Part I~\citep{Chi22}. The
initial conditions are written as
\begin{eqnarray}
\left(v_{1},v_{2}\right) & = & \left(1,0\right)\textrm{ m/s},\nonumber \\
Y_{H_{2}} & = & \frac{1}{2}\left[1-\tanh\left(\sqrt{x_{1}^{2}+x_{2}^{2}}-10\right)\right],\nonumber \\
Y_{O_{2}} & = & 1-Y_{H_{2}},\label{eq:thermal-bubble}\\
T & = & 1200-900\tanh\left(\sqrt{x_{1}^{2}+x_{2}^{2}}-10\right)\textrm{ K},\nonumber \\
P & = & 1\textrm{ bar}.\nonumber 
\end{eqnarray}
The thermodynamic relations for the hydrogen-oxygen mechanism used
here are given by
\begin{align*}
\frac{W_{H_{2}}c_{p,H_{2}}\left(T\right)}{R^{0}} & =3.47-0.220\widehat{T}+0.577\widehat{T}^{2}-0.194\widehat{T}^{3}+0.0210\widehat{T}^{4},\\
\frac{W_{H_{2}}h_{H_{2}}\left(T\right)}{R^{0}} & =\int\frac{W_{H_{2}}c_{p,H_{2}}\left(\tau\right)d\tau}{R^{0}}-1028.7\text{ K},\quad\frac{W_{H_{2}}s_{H_{2}}^{o}}{R^{0}}=\int\frac{W_{H_{2}}c_{p,H_{2}}\left(\tau\right)d\tau}{R^{0}\tau}-4.00,\\
\frac{W_{O_{2}}c_{p,O_{2}}\left(T\right)}{R^{0}} & =3.09+1.77\widehat{T}-0.911\widehat{T}^{2}+0.243\widehat{T}^{3}-0.0242\widehat{T}^{4},\\
\frac{W_{O_{2}}h_{O_{2}}\left(T\right)}{R^{0}} & =\int\frac{W_{O_{2}}c_{p,O_{2}}\left(\tau\right)d\tau}{R^{0}}-992.9\text{ K},\quad\frac{W_{O_{2}}s_{O_{2}}^{o}}{R^{0}}=\int\frac{W_{O_{2}}c_{p,O_{2}}\left(\tau\right)d\tau}{R^{0}\tau}+6.57,
\end{align*}
where $\widehat{T}=T/T_{r}$, with $T_{r}=1000$ K, and $\tau$ is
a dummy variable for $T$. The computational domain is $\text{\ensuremath{\Omega}}=\left(-25,25\right)\mathrm{m}\times\left(-25,25\right)\mathrm{m}$.
Symmetry conditions are imposed along the top and bottom boundaries,
while the left and right boundaries are periodic. The time-step size
is prescribed according to $\mathrm{CFL}=0.4$ based on the acoustic
time scale. The purpose of this test case is to determine whether
the proposed formulation can maintain pressure equilibrium (in an
approximate sense) when using multidimensional curved grids. Here,
the curved elements are of quadratic geometric order. We first generate
a uniform quadrilateral grid with 100 cells in each direction. At
interior edges, the midpoint nodes are then slightly perturbed from
their initial positions. We compute $p=1$, $p=2$, and $p=3$ solutions
without artificial viscosity up to $t=50\text{ s}$, corresponding
to one period. }

\RevisionTextOne{Figure~\ref{fig:thermal_bubble_p1} presents the $p=1$ temperature,
pressure, and streamwise-velocity distributions. The initial temperature
field is also shown. The curved grid is superimposed on the pressure
and velocity fields. Slight underresolution can can be observed even
in the initial temperature distribution. The pressure and velocity
ranges given in Figure~\ref{fig:thermal_bubble_p1} correspond to
the actual respective minima and maxima in the corresponding solutions.
Despite visible errors in the temperature and small deviations pressure
and velocity equilibrium, the solution remains stable throughout the
simulation. In contrast, with standard overintegration, the solver
fails before $t=0.06$ s. Note that it was previously found that spurious
pressure oscillations still occur even when using entropy-stable (fully
conservative) schemes and may be tied to spurious increases in thermodynamic
entropy~\citep{Gou20_2}. Here, with standard overintegration, such
oscillations appear early in the simulation and grow rapidly in time,
in turn causing substantial undershoots/overshoots in temperature
and other quantities. The large-scale instabilities can cause failure
by, for example, requiring a near-zero time-step size to maintain
positivity and entropy boundedness or yielding temperatures at which
the thermodynamic curve fits are nonsensical. }

\begin{figure}[tbph]
\subfloat[\label{fig:thermal_bubble_p1_initial_T}Initial temperature field.]{\includegraphics[width=0.48\columnwidth]{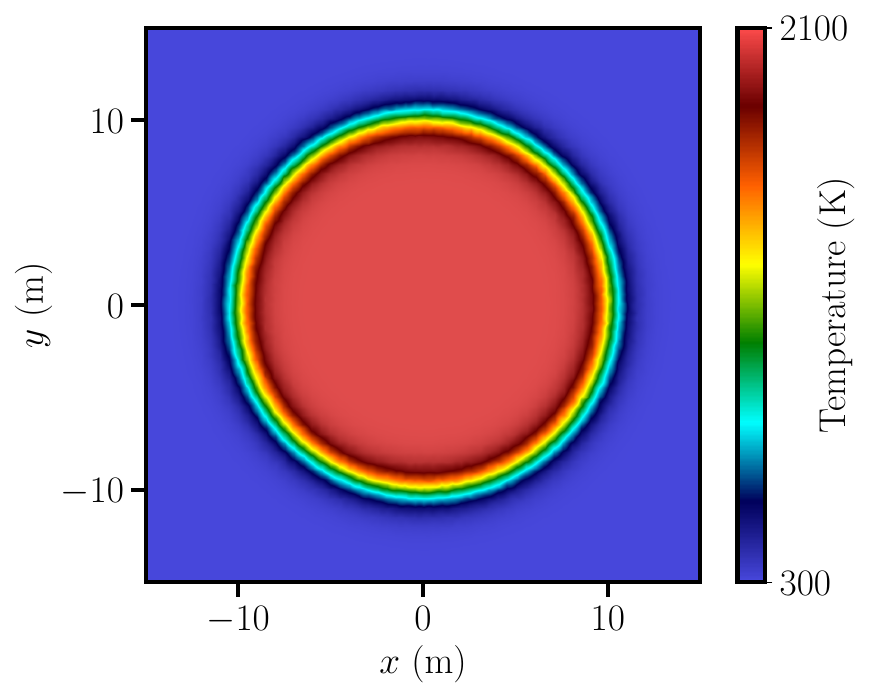}}\hfill{}\subfloat[\label{fig:thermal_bubble_p1_final_T}Final temperature field.]{\includegraphics[width=0.48\columnwidth]{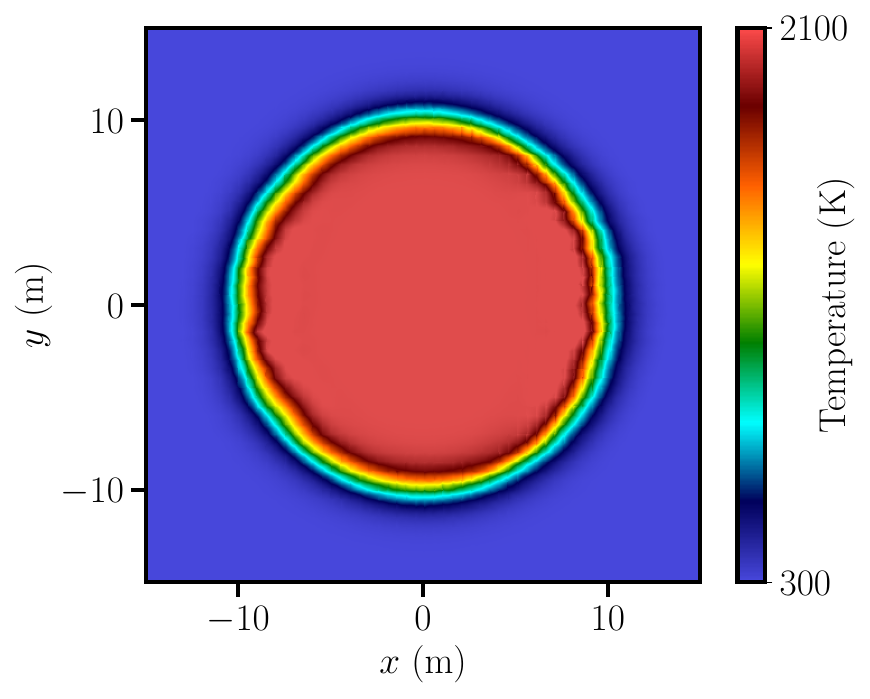}}\hfill{}\subfloat[\label{fig:thermal_bubble_p1_P}Final pressure field.]{\includegraphics[width=0.48\columnwidth]{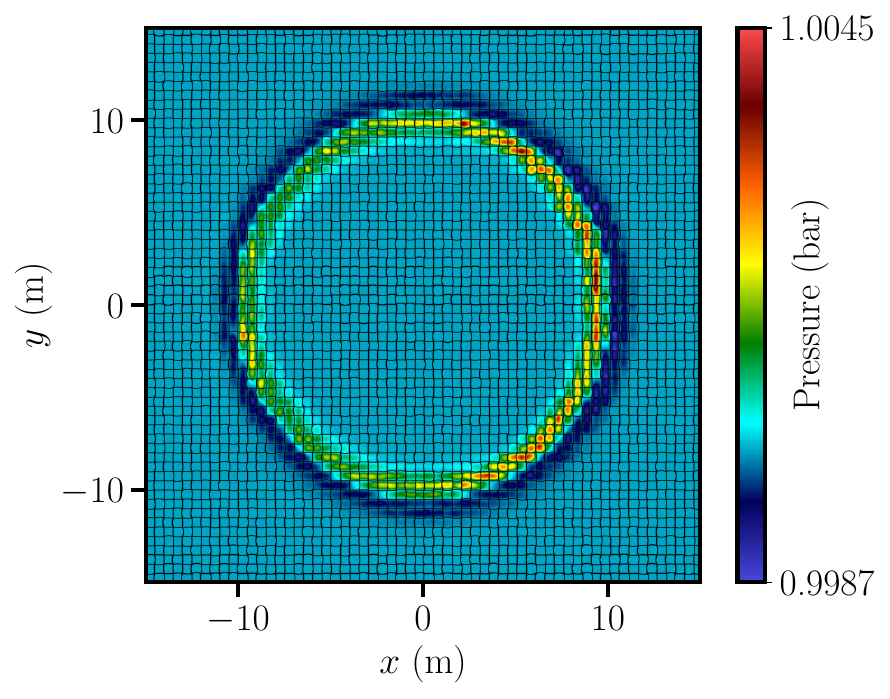}}\hfill{}\subfloat[\label{fig:thermal_bubble_p1_v1}Final streamwise-velocity field.]{\includegraphics[width=0.48\columnwidth]{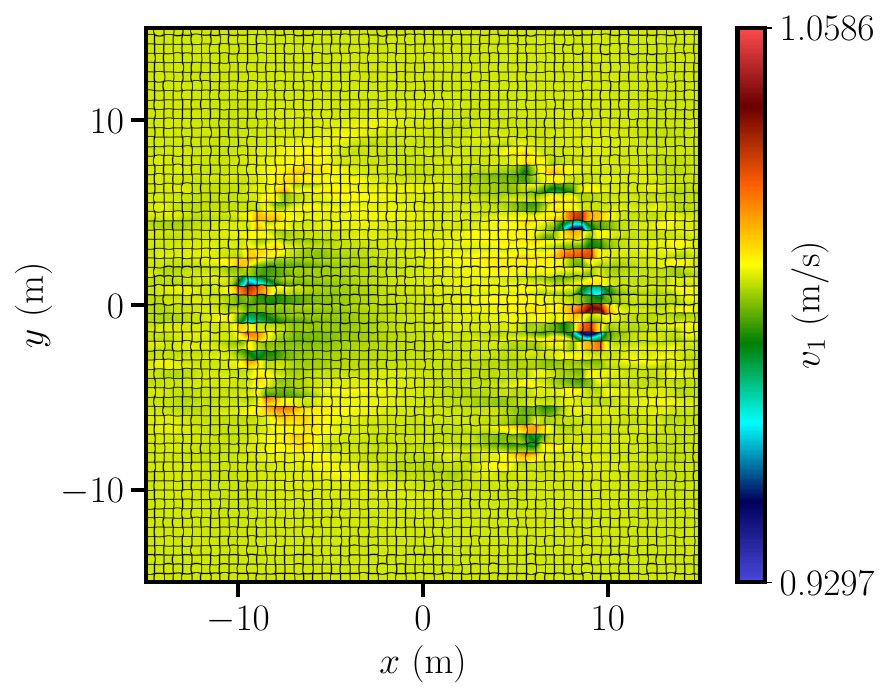}}

\caption{\label{fig:thermal_bubble_p1} \protect\RevisionTextOne{$p=1$ solution to two-dimensional thermal-bubble advection on a curved
quadrilateral grid without artificial viscosity. The final time is
$t=50$ s, corresponding to one period. The colorbar minima and maxima
for the pressure and velocity fields are the respective global minima
and maxima.}}
\end{figure}

\RevisionTextOne{Figures~\ref{fig:thermal_bubble_p2} and~\ref{fig:thermal_bubble_p3}
display the $p=2$ and $p=3$ solutions, respectively. As the polynomial
order increases, the profile of the thermal bubble is better maintained
and the deviations from pressure and velocity equilibrium are decreased. }

\begin{figure}[tbph]
\subfloat[\label{fig:thermal_bubble_p2_initial_T}Initial temperature field.]{\includegraphics[width=0.48\columnwidth]{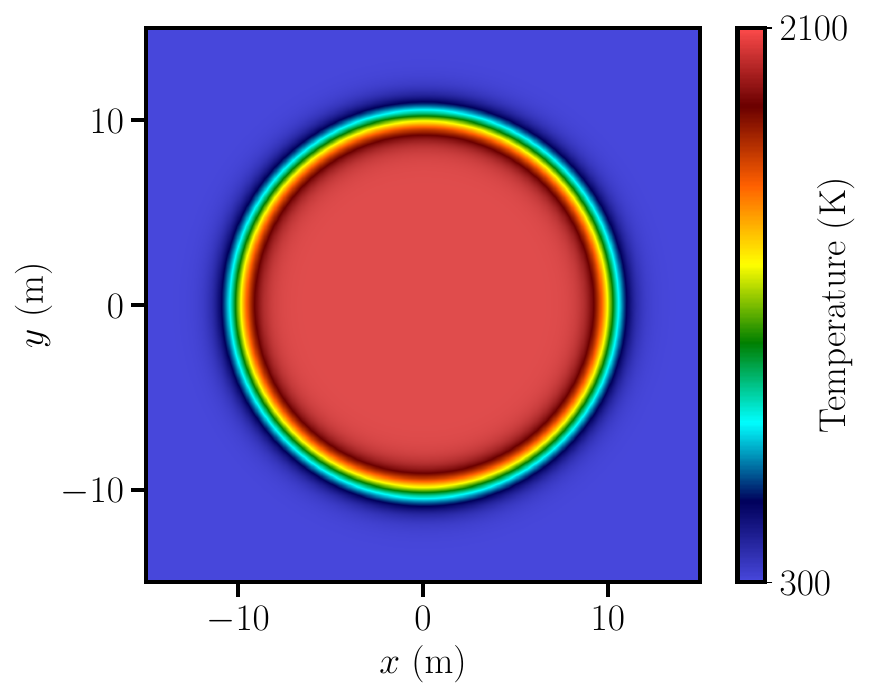}}\hfill{}\subfloat[\label{fig:thermal_bubble_p2_final_T}Final temperature field.]{\includegraphics[width=0.48\columnwidth]{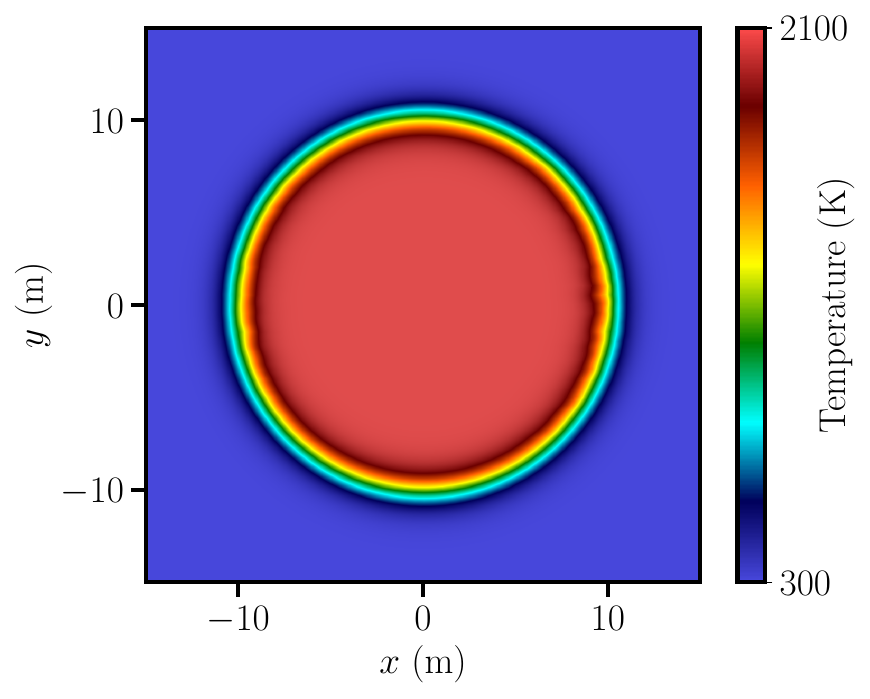}}\hfill{}\subfloat[\label{fig:thermal_bubble_p2_P}Final pressure field.]{\includegraphics[width=0.48\columnwidth]{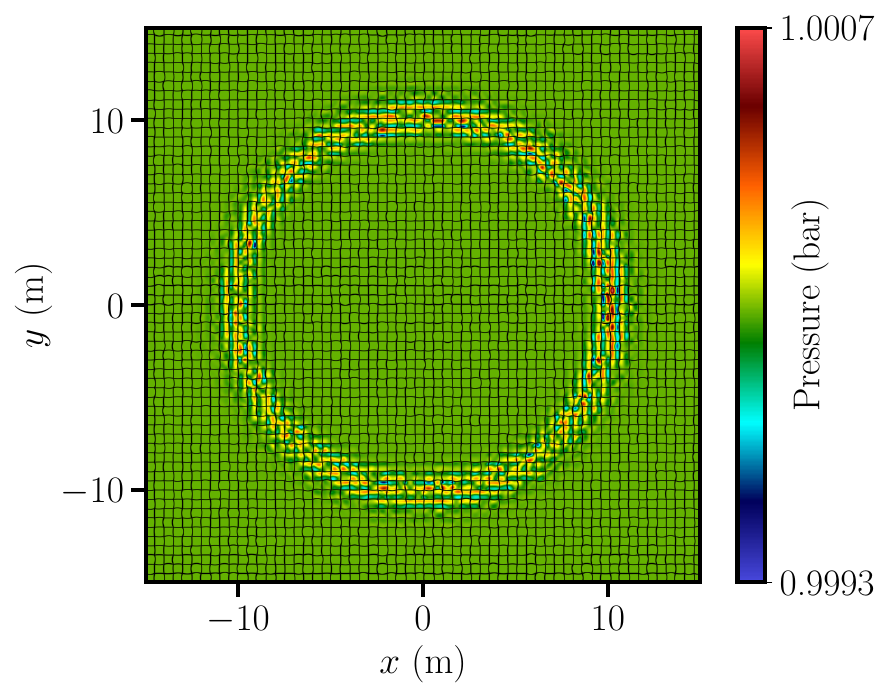}}\hfill{}\subfloat[\label{fig:thermal_bubble_p2_v1}Final streamwise-velocity field.]{\includegraphics[width=0.48\columnwidth]{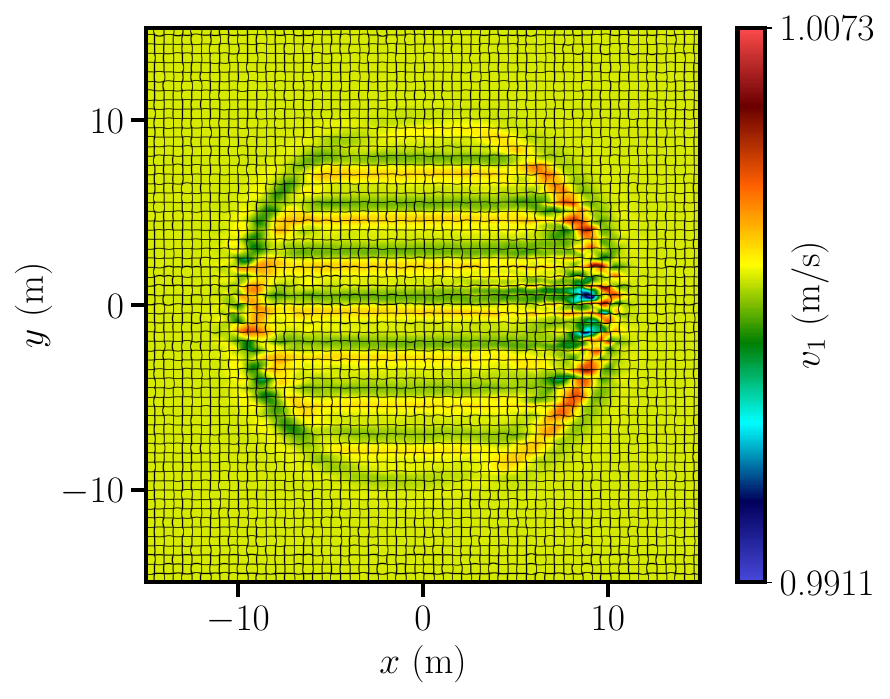}}

\caption{\label{fig:thermal_bubble_p2} \protect\RevisionTextOne{$p=2$ solution to two-dimensional thermal-bubble advection on a curved
quadrilateral grid without artificial viscosity. The final time is
$t=50$ s, corresponding to one period. The colorbar minima and maxima
for the pressure and velocity fields are the respective global minima
and maxima.}}
\end{figure}
\begin{figure}[tbph]
\subfloat[\label{fig:thermal_bubble_p3_initial_T}Initial temperature field.]{\includegraphics[width=0.48\columnwidth]{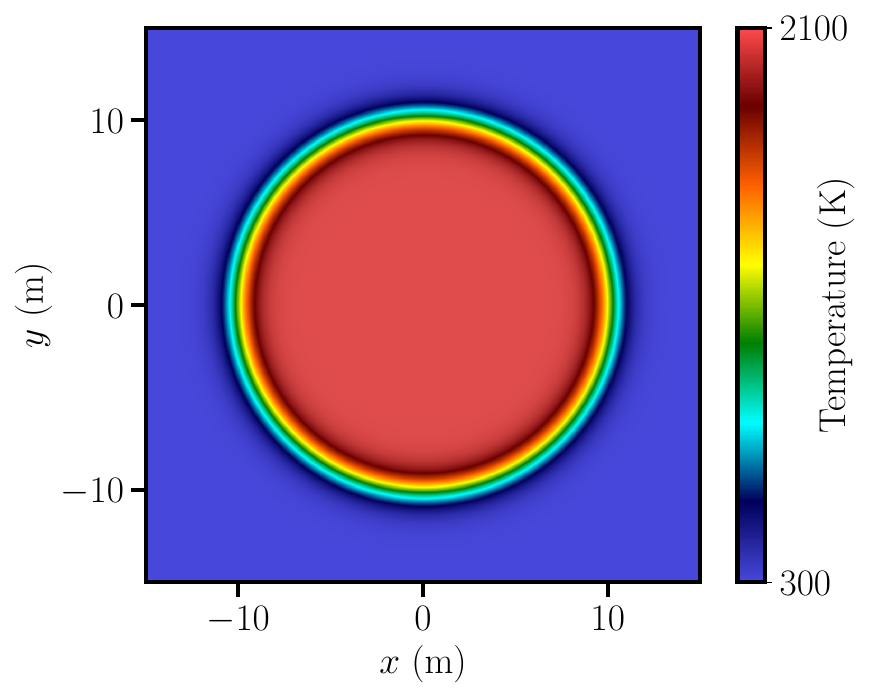}}\hfill{}\subfloat[\label{fig:thermal_bubble_p3_final_T}Final temperature field.]{\includegraphics[width=0.48\columnwidth]{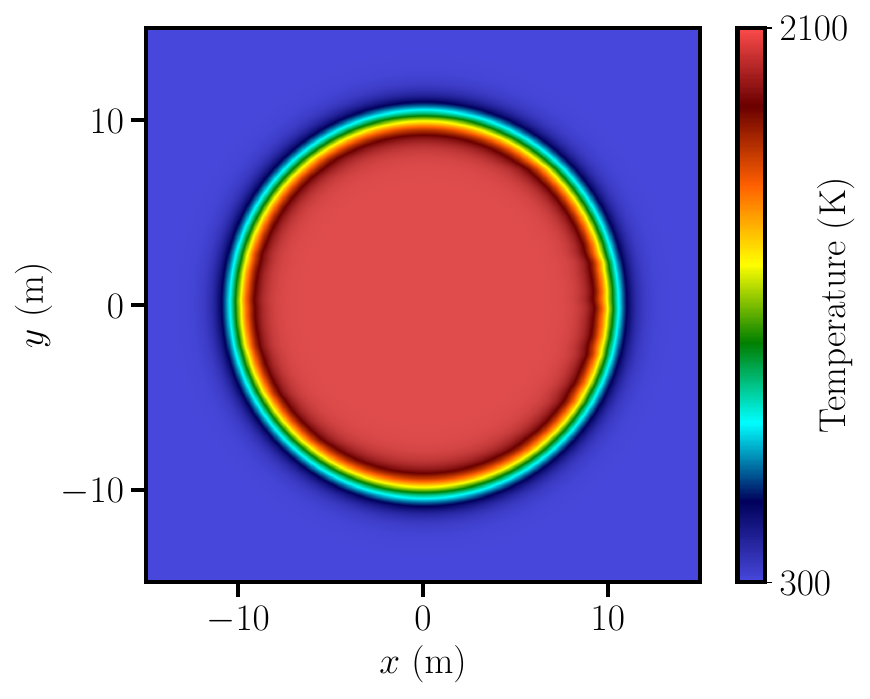}}\hfill{}\subfloat[\label{fig:thermal_bubble_p3_P}Final pressure field.]{\includegraphics[width=0.48\columnwidth]{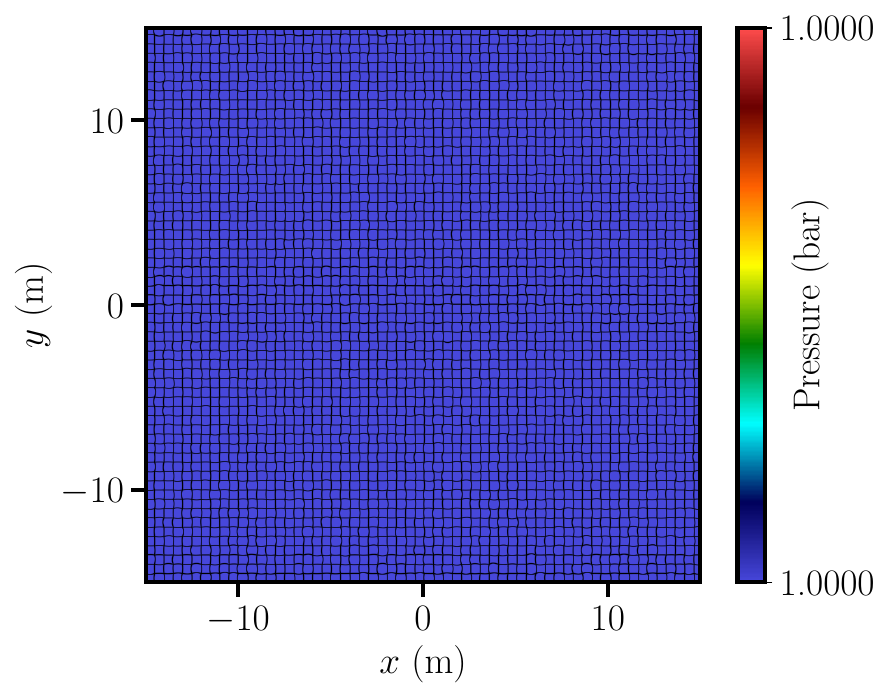}}\hfill{}\subfloat[\label{fig:thermal_bubble_p3_v1}Final streamwise-velocity field.]{\includegraphics[width=0.48\columnwidth]{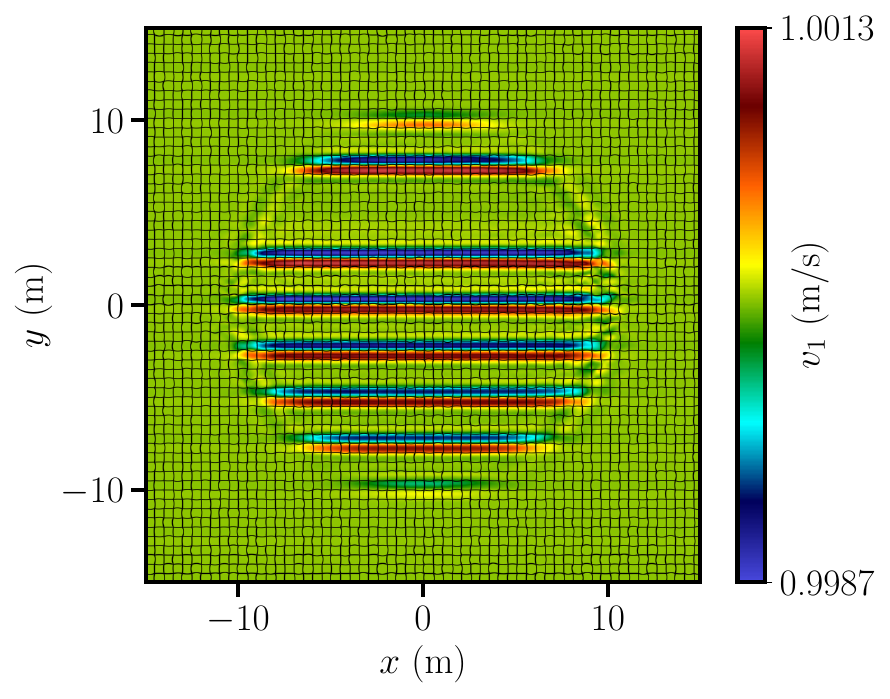}}

\caption{\label{fig:thermal_bubble_p3} \protect\RevisionTextOne{$p=3$ solution to two-dimensional thermal-bubble advection on a curved
quadrilateral grid without artificial viscosity. The final time is
$t=50$ s, corresponding to one period. The colorbar minima and maxima
for the pressure and velocity fields are the respective global minima
and maxima.}}
\end{figure}

\subsection{Two-dimensional detonation wave\label{subsec:2D-detonation-wave}}

This test case is the two-dimensional extension of the one-dimensional
hydrogen-oxygen detonation wave diluted in Argon presented in Part
I~\citep{Chi22}. The computational domain is $\text{\ensuremath{\Omega}}=\left(0,0.45\right)\mathrm{m}\times\left(0,0.06\right)\mathrm{m}$,
with walls at the left, right, bottom, and top boundaries. \RevisionTextThree{An acoustic CFL value of 0.8 is used. Although the results presented
here are obtained with the SSPRK2 time integration scheme, we have
confirmed that the SSPRK3 scheme does not appreciably change the results.} The detonation wave moves from left to right. To perturb the detonation,
two high-temperature/high-pressure circular regions,
\begin{align*}
\mathcal{C}_{1} & =\left\{ x\left|\sqrt{\left(x_{1}-0.021\right)^{2}+\left(x_{2}-0.015\right)^{2}}<0.0025\text{ m}\right.\right\} ,\\
\mathcal{C}_{2} & =\left\{ x\left|\sqrt{\left(x_{1}-0.022\right)^{2}+\left(x_{2}-0.044\right)^{2}}<0.0025\text{ m}\right.\right\} ,
\end{align*}
are placed to the right of the initial detonation wave. The initial
conditions are given by

\begin{equation}
\begin{array}{cccc}
\qquad\qquad\qquad\left(v_{1},v_{2}\right) & = & \left(0,0\right)\text{ m/s},\\
X_{Ar}:X_{H_{2}O}:X_{OH}:X_{O_{2}}:X_{H_{2}} & = & \begin{cases}
8:2:0.1:0:0\\
7:0:0:1:2
\end{cases} & \begin{array}{c}
x_{1}<0.015\text{ m},x\in\mathcal{C}_{1},x\in\mathcal{C}_{2}\\
\mathrm{otherwise}
\end{array},\\
\qquad\qquad\qquad\qquad P & = & \begin{cases}
\expnumber{5.50}5 & \text{ Pa}\\
\expnumber{6.67}3 & \text{ Pa}
\end{cases} & \begin{array}{c}
x_{1}<0.015\text{ m},x\in\mathcal{C}_{1},x\in\mathcal{C}_{2}\\
\mathrm{otherwise}
\end{array},\\
\qquad\qquad\qquad\qquad T & = & \begin{cases}
3500 & \text{ K}\\
300\text{\hspace{1em}\hspace{1em}} & \text{ K}
\end{cases} & \begin{array}{c}
x_{1}<0.015\text{ m},x\in\mathcal{C}_{1},x\in\mathcal{C}_{2}\\
\mathrm{otherwise}
\end{array}.
\end{array}\label{eq:2D-detonation-initialization}
\end{equation}
No smoothing of the discontinuities in the initial conditions is performed.
\RevisionTextThree{The thermodynamic fits and reaction-rate coefficients employed here
and in the following subsection are provided in Appendix~\ref{sec:Chemical-mechanism-detonation}.}

This flow was also computed by Oran et al.~\citep{Ora98}, but with
a slightly longer domain. Their simulations revealed a complex cellular
structure wherein each cell is of size $0.055\text{ m}\times0.03\text{ m}$.
These results were supported by the experiments conducted by Lefebvre
et al.~\citep{Lef98}. In addition, Houim and Kuo~\citep{Hou11}
and Lv and Ihme~\citep{Lv15} simulated this case with a domain of
height $0.03$ m. In all the aforementioned simulations (not including
those here), the solutions were initialized from a one-dimensional
detonation. 

In previous work, Johnson and Kercher~\citep{Joh20_2} computed this
flow with $p=1$ and artificial viscosity for stabilization. The Westbrook
mechanism~\citep{Wes82} was employed. Complex flow features, such
as Kelvin-Helmholtz instabilities, pressure waves, and triple points,
were well-captured, and the correct cellular structure was predicted.
In particular, given that their domain height was $0.06$ m, there
were two cells in the vertical direction. However, a very fine mesh,
with spacing $h=9\times10^{-5}$ m, was required. Increasing $h$
or $p$ led to instabilities that could not be cured with the artificial
viscosity, resulting in solver divergence. Although increasing $p$
enhances the resolution, the solution also generally becomes more
susceptible to spurious oscillations. Here, we aim to achieve robust
solutions with $p\geq1$ and relatively coarse meshes using the proposed
entropy-bounded DG formulation.  Gmsh~\citep{Geu09} is used to
generate unstructured triangular meshes with the following characteristic
mesh sizes: $2h$, $4h$, $8h$, $16h$, $32h$, and $64h$. 

Figure~\ref{fig:2D-detonation-X-OH-p2-all-h} displays the distribution
of OH mole fraction obtained from $p=2$ solutions at $t=200$~$\mu\mathrm{s}$
on a sequence of meshes. For $64h$, although the solution is stable,
the post-shock flow is extremely smeared due to the exceedingly low
resolution, and some spurious artifacts near the leading shock are
present. Interestingly, the detonation-front location is nevertheless
accurately predicted. For $32h$, despite evident smearing of the
post-shock flow, the overall flow topology can be discerned. With
each successive refinement of the mesh, the flow features behind the
detonation front become sharper. For $4h$ and $2h$, the flow topology
is very well-captured, including the triple points that connect the
Mach stems and incident shock, the transverse waves traveling in the
vertical directions, and the vortices that form behind the detonation
front, demonstrating the ability of the developed entropy-bounded
DG formulation to achieve accurate solutions to complex reacting flows.
Furthermore, in all cases, the solution is stable throughout the simulation,
illustrating the robustness of the proposed formulation. 

\begin{figure}[tbph]
\begin{centering}
\includegraphics[width=0.9\columnwidth]{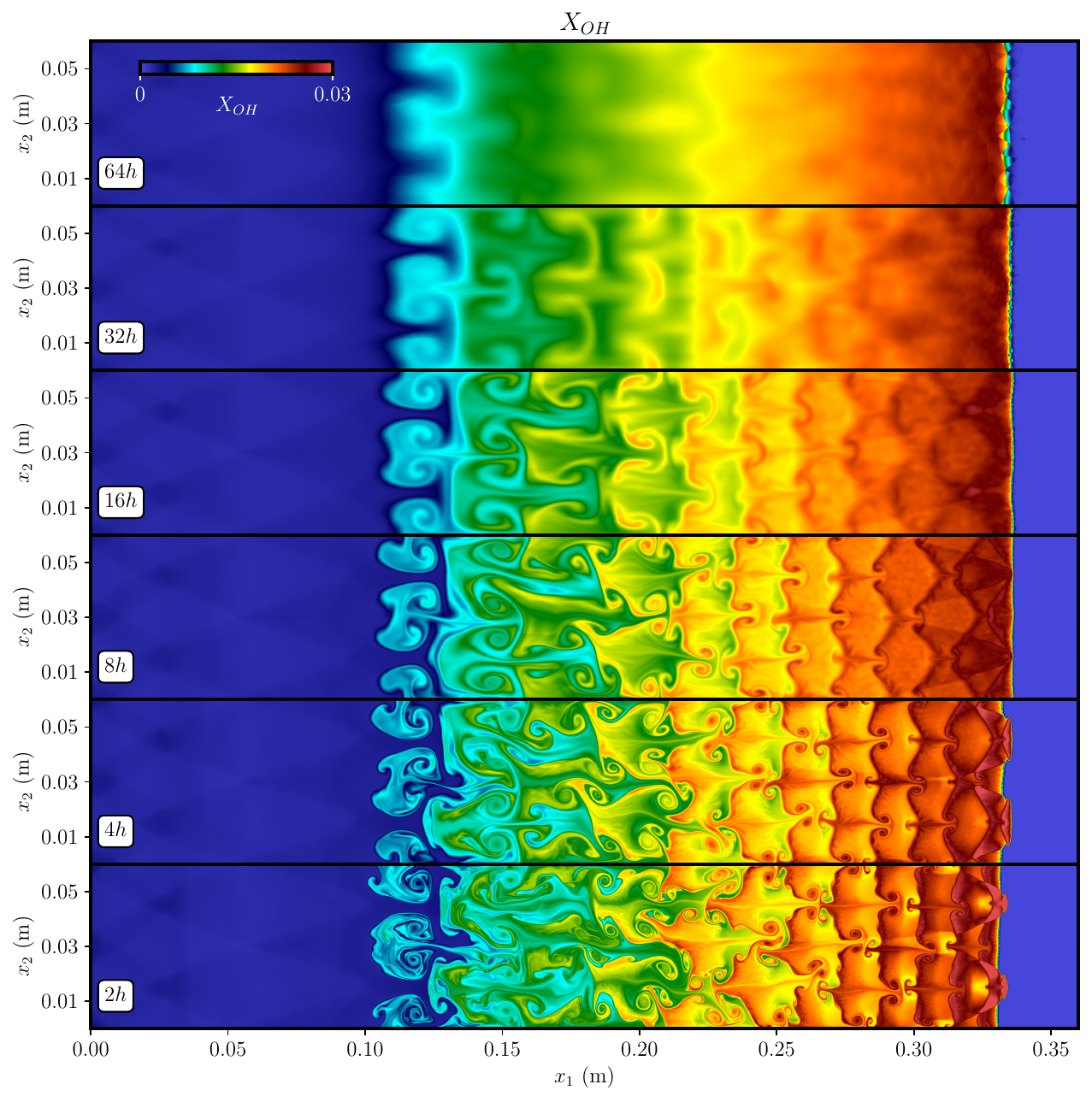}
\par\end{centering}
\caption{\label{fig:2D-detonation-X-OH-p2-all-h}OH mole-fraction field for
a two-dimensional moving detonation wave at $t=200$~$\mu\mathrm{s}$,
computed with $p=2$ on a sequence of meshes, where $h=9\times10^{-5}$
m. The initial conditions are given in Equation~(\ref{eq:2D-detonation-initialization}).}
\end{figure}

Figure~\ref{fig:2D-detonation-pmax} shows the maximum-pressure history,
$P^{*}$, where $P^{*,j+1}(x)=\max\left\{ P^{j+1}(x),P^{*,j}(x)\right\} $,
for the $p=2$ solutions at $t=200$~$\mu\mathrm{s}$. This quantity
reveals the expected diamond-like cellular structure, with two cells
in the vertical direction. The $64h$ lacks any clear cellular structure
due to the excessive smearing. However, detonation cells can be discerned
in the $32h$ case, despite the coarse resolution. From $32h$ to
$8h$, the cellular structure begins to dissipate towards the right
of the domain, though it nevertheless remains intact. At $4h$ and
$2h$, the detonation cells remain sharp throughout. 

\begin{figure}[tbph]
\begin{centering}
\includegraphics[width=0.9\columnwidth]{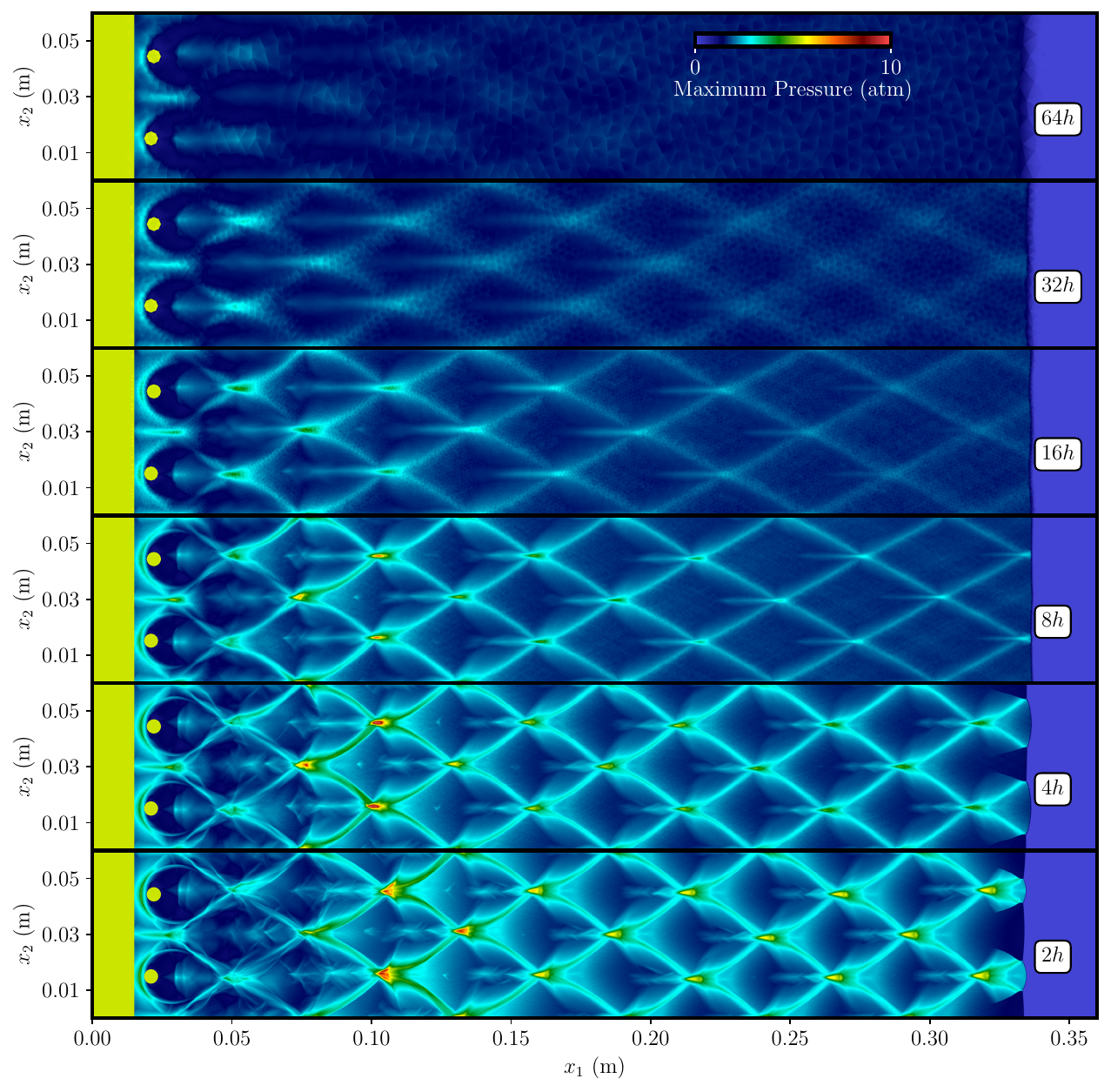}
\par\end{centering}
\caption{\label{fig:2D-detonation-pmax}Maximum-pressure history,$P^{*}$,
where $P^{*,j+1}(x)=\max\left\{ P^{j+1}(x),P^{*,j}(x)\right\} $,
for a two-dimensional moving detonation wave at $t=200$~$\mu\mathrm{s}$
computed with $p=2$ on a sequence of uniformly refined meshes, where
$h=9\times10^{-5}$ m. The initial conditions are given in Equation~(\ref{eq:2D-detonation-initialization}).}
\end{figure}

It remains to be seen how much smearing of the solution at and behind
the detonation front is acceptable in a larger-scale configuration,
although it is encouraging that the front location and cellular structure
are overall well-predicted even with extremely coarse meshes. However,
a thorough investigation of this matter is outside the scope of this
study.  Figure~\ref{fig:2D-detonation-X-OH-8h-all-p} compares the
solutions for $p=1$, $p=2$, and $p=3$ on the $8h$ mesh, which
has 123,466 cells. Some smearing of the flow field is observed in
the $p=1$ solution. Increasing $p$ results in sharper predictions
of the complex flow structures. \RevisionTextThree{Also included is a $p=1$ solution on the $4h$, 488,772-cell mesh,
which has slightly more degrees of freedom than the $p=3,\:8h$ solution.
The flow near the detonation front is sharper in the $p=1,\:4h$ solution,
while the vortical structures farther behind the detonation front
are better resolved in the $p=3,\:8h$ solution.} Note, however, that a rigorous comparison among polynomial orders
on the basis of accuracy vs. cost is left for future work; our goal
here is to demonstrate (as in Figures~\ref{fig:2D-detonation-pmax}
and~\ref{fig:2D-detonation-X-OH-8h-all-p}) that the proposed methodology
can robustly compute accurate solutions using high-order polynomial
approximations.
\begin{figure}[tbph]
\begin{centering}
\includegraphics[width=0.9\columnwidth]{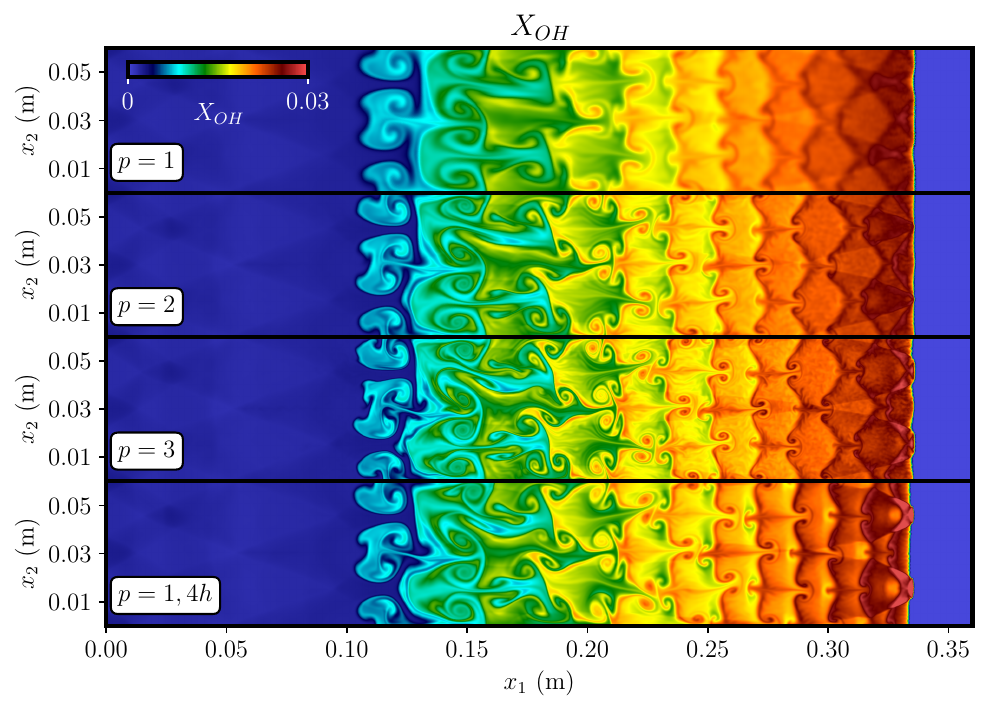}
\par\end{centering}
\caption{\label{fig:2D-detonation-X-OH-8h-all-p}OH mole-fraction field for
a two-dimensional moving detonation wave at $t=200$~$\mu\mathrm{s}$
computed with the $8h$ mesh, where $h=9\times10^{-5}$ m, using the
following polynomial orders: $p=1$, $p=2$, and $p=3$. \protect\RevisionTextThree{Also included is a $p=1$ solution on the $4h$ mesh.}
The initial conditions are given in Equation~(\ref{eq:2D-detonation-initialization}).}
\end{figure}

It is worth noting that the entropy limiter, as opposed to solely
the positivity-preserving limiter, significantly improves the stability
of these calculations. Specifically, throughout each simulation, the
entropy limiter maintains a minimum temperature of around 300 K. Without
it, however, the temperature can dip significantly below 300 K, where
the curve fits for thermodynamic properties are no longer valid. Consequently,
the nonlinear solver in the reaction step can slow down significantly
and even stall. As an example, for $p=2$, $64h$, the simulation
with the entropy limiter is over 14 times less expensive than a corresponding
simulation with only the positivity-preserving limiter. In other types
of flows, these errors in temperature may pollute the solution in
different ways as well. This further highlights not only the numerical
difficulties of calculating multicomponent, reacting flows with realistic
thermodynamics, but also the advantages of using the entropy limiter
in simulations of such flows.

\RevisionTextThree{Additional diagnostics are obtained for the $p=2$, $32h$ calculation
as a representative example. Figure~\ref{fig:2D_detonation_dt} displays
the time-step size at every step. At the beginning of the simulation,
large variations in the time-step size are observed due to start-up
effects and the onset of high-temperature chemical reactions. The
time-step size then gradually increases and eventually oscillates
around a steady value, reflecting the periodic manner in which the
triple-point tracks collide with each other and reflect from the walls.
At each time step, only one DGODE sub-step is required (i.e., no $h$-adaptivity
in time is needed).}
\begin{figure}[tbph]
\begin{centering}
\includegraphics[width=0.6\columnwidth]{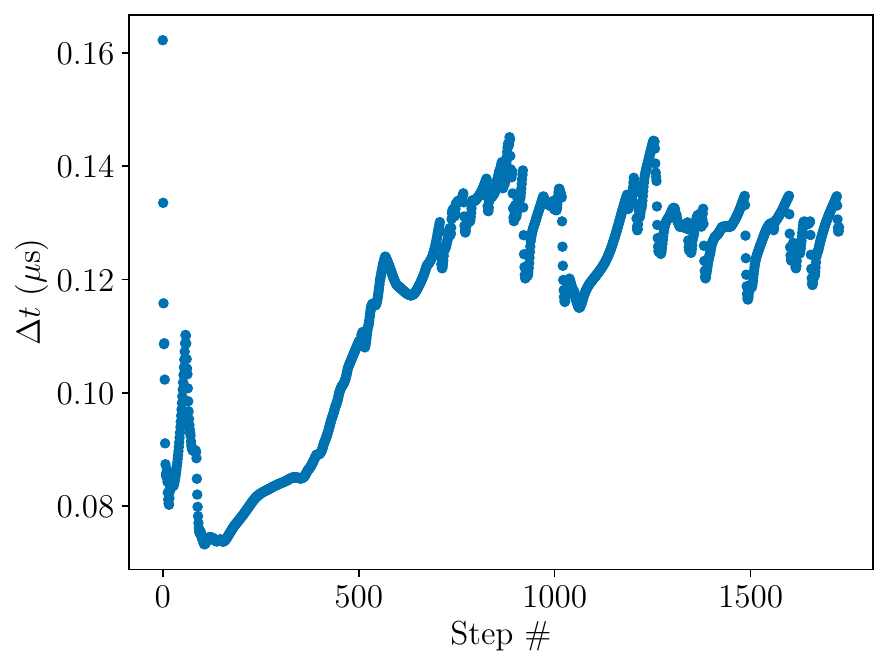}
\par\end{centering}
\caption{\label{fig:2D_detonation_dt}Variation of time-step size for the $p=2$,
$32h$ calculation. The initial conditions for this two-dimensional
hydrogen detonation problem are given in Equation~(\ref{eq:2D-detonation-initialization}). }
\end{figure}

Figure~\ref{fig:2D_detonation_conservation_error} presents the percent
error in mass, energy, and atom conservation for $p=3$, $64h$ as
a representative example, calculated every $0.200\;\mu\mathrm{s}$
(for a total of 1000 samples). Specifically, $\mathsf{N}_{O}$, $\mathsf{N}_{H}$,
and $\mathsf{N}_{Ar}$ denote the total numbers of oxygen, hydrogen,
and argon atoms in the mixture. The error profiles oscillate around
$10^{-13}$\%  due to minor numerical-precision issues. Overall, the
errors remain close to machine precision, confirming that the entropy-bounded
DG formulation is conservative. Figure~\ref{fig:2D_detonation_conservation_error}
also shows the error in mass conservation (calculated every time step)
for a solution computed using a commonly employed clipping procedure
in which negative species concentrations are set to zero, instead
of the limiting procedure described in Section~\ref{subsec:limiting-procedure}.
Although artificial viscosity is still employed, the error increases
rapidly until the solver diverges.

\begin{figure}[tbph]
\begin{centering}
\includegraphics[width=0.6\columnwidth]{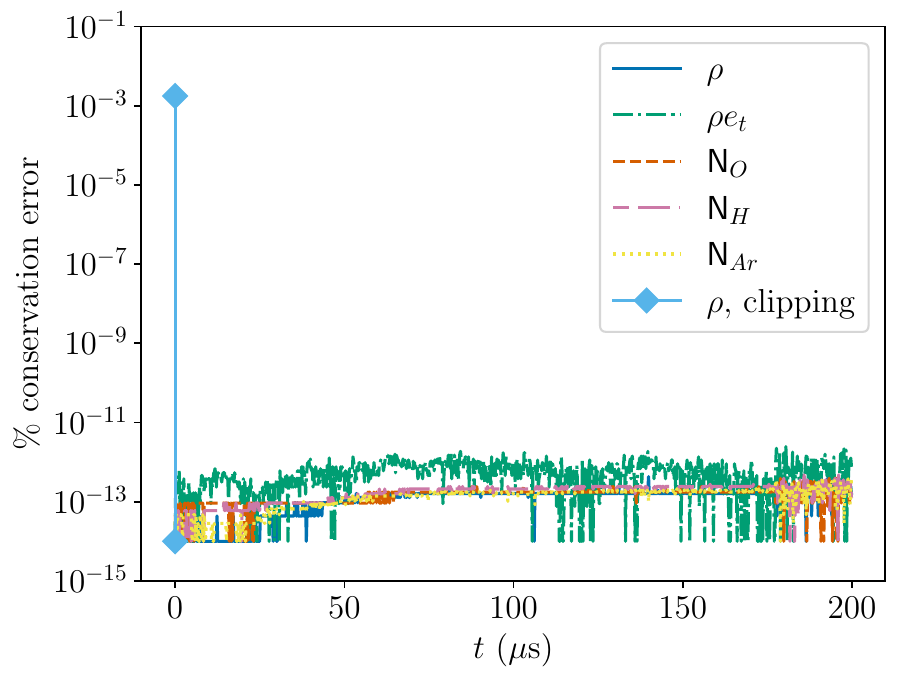}
\par\end{centering}
\caption{\label{fig:2D_detonation_conservation_error}Percent error in mass,
energy, and atom conservation for $p=3$, $64h$, where $h=9\times10^{-5}$
m. The initial conditions for this two-dimensional hydrogen detonation
problem are given in Equation~(\ref{eq:2D-detonation-initialization}).
Also included is the error in mass conservation for a solution computed
using a commonly employed clipping procedure in which negative species
concentrations are set to zero instead of the limiting procedure described
in Section~\ref{subsec:limiting-procedure}.}
\end{figure}

Finally, we recompute the $p=2$, $64h$ case with curved elements
of quadratic geometric order. In particular, high-order geometric
nodes are inserted at the midpoints of the vertices of each element.
At interior edges, the midpoint nodes are then slightly perturbed
from their initial positions. These perturbations are performed only
for $x>0.05\text{ m}$ to guarantee identical initial conditions.
We intentionally run this low-resolution case to ensure extensive
activation of the limiter and aggressively test the robustness of
the formulation when curved elements are employed. The distributions
of OH mole fraction are given in Figure~\ref{fig:2D-detonation-X-OH-64h-linear-curved}
for both the linear and curved meshes. Though slight mesh imprinting
can be observed for the curved mesh, likely due to the lower-quality
elements, the two solutions are extremely similar and remain stable
throughout. This result confirms that the proposed formulation is
indeed compatible with nonlinear elements.

\begin{figure}[tbph]
\begin{centering}
\includegraphics[width=0.9\columnwidth]{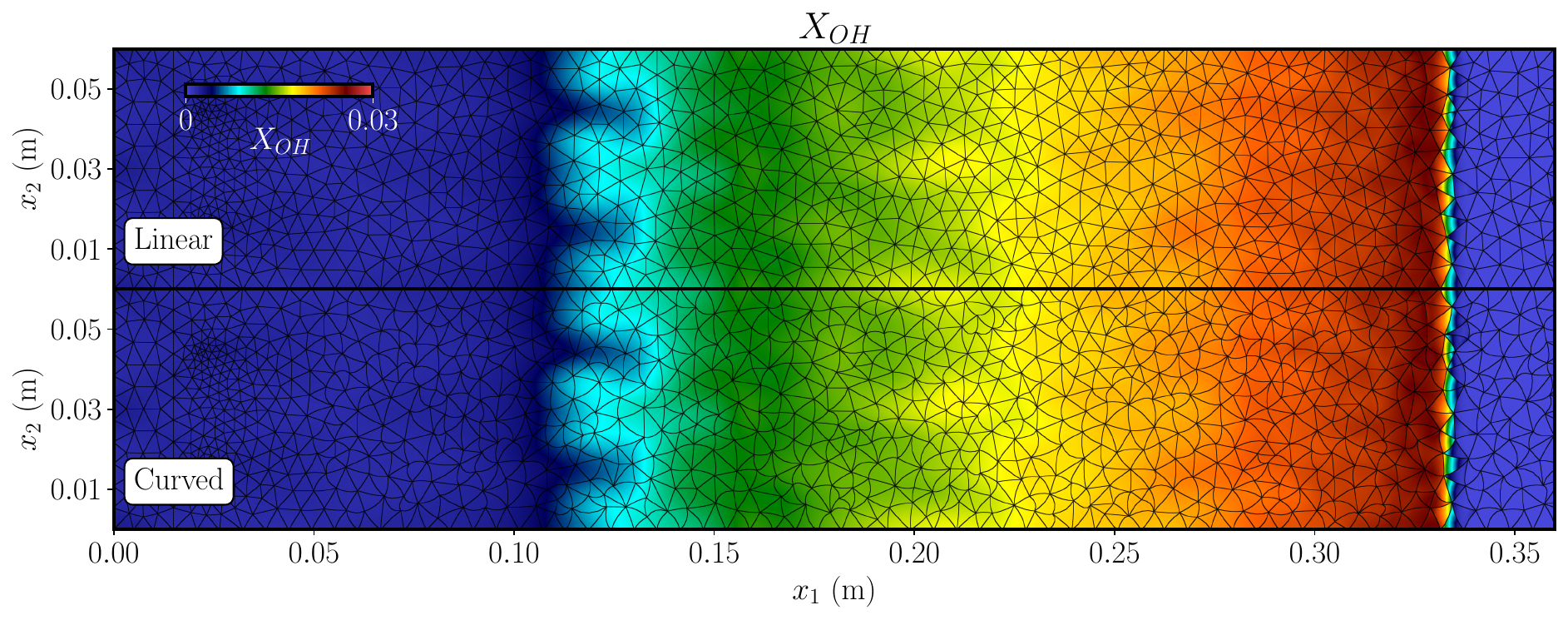}
\par\end{centering}
\caption{\label{fig:2D-detonation-X-OH-64h-linear-curved}OH mole-fraction
field for a two-dimensional moving detonation wave at $t=200$~$\mu\mathrm{s}$
computed with $p=2$ and $64h$, where $h=9\times10^{-5}$ m, on linear
and curved meshes. The curved mesh, which is of quadratic order, is
obtained by inserting high-order geometric nodes into the linear mesh
and then perturbing the inserted nodes. The initial conditions are
given in Equation~(\ref{eq:2D-detonation-initialization}).}
\end{figure}

\subsection{Three-dimensional detonation wave\label{subsec:3D-detonation-wave}}

This test case is the three-dimensional extension of the two-dimensional
detonation wave computed in Section~\ref{subsec:2D-detonation-wave}.
The computational domain is $\text{\ensuremath{\Omega}}=\left(0,0.3\right)\mathrm{m}\times\left(0,0.03\right)\mathrm{m}\times\left(0,0.03\right)\;\mathrm{m}$,
with walls at all boundaries. To reduce computational and memory demands,
the size of the domain in the $x_{1}$- and $x_{2}$-directions is
smaller than in Section~\ref{subsec:2D-detonation-wave}. \RevisionTextThree{An acoustic CFL value of 0.8 is used. Although the results presented
here are obtained with the SSPRK2 time integration scheme, we have
confirmed that the SSPRK3 scheme does not appreciably change the results.} The initial conditions are given by

\begin{equation}
\begin{array}{cccc}
\qquad\qquad\qquad\left(v_{1},v_{2}\right) & = & \left(0,0\right)\text{ m/s},\\
X_{Ar}:X_{H_{2}O}:X_{OH}:X_{O_{2}}:X_{H_{2}} & = & \begin{cases}
7:0:0:1:2\\
8:2:0.1:0:0
\end{cases} & \begin{array}{c}
x_{1}\geq0.015\text{ m},x\in\mathcal{C}\\
\mathrm{otherwise}
\end{array},\\
\qquad\qquad\qquad\qquad P & = & \begin{cases}
\expnumber{6.67}3 & \text{ Pa}\\
\expnumber{5.50}5 & \text{ Pa}
\end{cases} & \begin{array}{c}
x_{1}\geq0.015\text{ m},x\in\mathcal{C}\\
\mathrm{otherwise}
\end{array},\\
\qquad\qquad\qquad\qquad T & = & \begin{cases}
300\text{\hspace{1em}\hspace{1em}} & \text{ K}\\
3500 & \text{ K}
\end{cases} & \begin{array}{c}
x_{1}\geq0.015\text{ m},x\in\mathcal{C}\\
\mathrm{otherwise}
\end{array},
\end{array}\label{eq:3D-detonation-initialization}
\end{equation}
where $\mathcal{C}$ represents a pocket of unburnt gas, similar to
the two-dimensional configuration in~\citep{Lv15}, defined as
\[
\mathcal{C}=\left\{ x\left|\sqrt{\left(x_{1}-0.014\right)^{2}+\left(x_{2}-0.015\right)^{2}+x_{3}^{2}}<0.005\text{ m}\right.\right\} .
\]
No smoothing of the discontinuities in the initial conditions is performed.
The same type of diamond-like cellular structure is expected for this
case, with one cell in the $x_{2}$- and $x_{3}$-directions~\citep{Dei03,Tsu02}.
Our objective with this demonstration test case is to achieve an accurate
solution to this complex, large-scale flow problem in a robust manner
using high-order polynomials. We leverage previous studies that detail
the physics of this flow configuration~\citep{Dei03,Tsu02}. Based
on the results in Section~\ref{subsec:2D-detonation-wave}, we select
$p=2$ and a characteristic mesh spacing of $4.8h$, in order to balance
accuracy with computational and memory costs. These choices result
in an unstructured mesh containing approximately 15 million tetrahedral
elements. 

Figures~\ref{fig:XOH-temporal-history} and~\ref{fig:temperature-temporal-history}
display the $X_{OH}$ and temperature distributions, respectively,
along the center $x_{3}=0.015\text{ mm}$ plane as the detonation
front traverses the domain. At early times, transverse waves, a large
induction zone, and a high-temperature, highly reactive region are
present as a result of the initialization. As the detonation is established,
the flow field behind the detonation front becomes characterized by
unsteady, multidimensional flow features, such as vortical structures,
wave interactions and reflections, and Kelvin-Helmholtz instabilities.
In the vicinity of the detonation front, transverse waves traveling
in the vertical directions and triple points can be observed. Some
small-scale numerical instabilities are present, but these can be
dampened with improvements to the artificial-viscosity formulation
(e.g., smooth artificial viscosity~\citep{Chi19}) or more complex
limiting strategies (outside the scope of this paper). Furthermore,
the small-scale instabilities do not impede the temporal advancement
of the solution or cause solver divergence.

\begin{figure}[tbph]
\begin{centering}
\includegraphics[width=0.9\columnwidth]{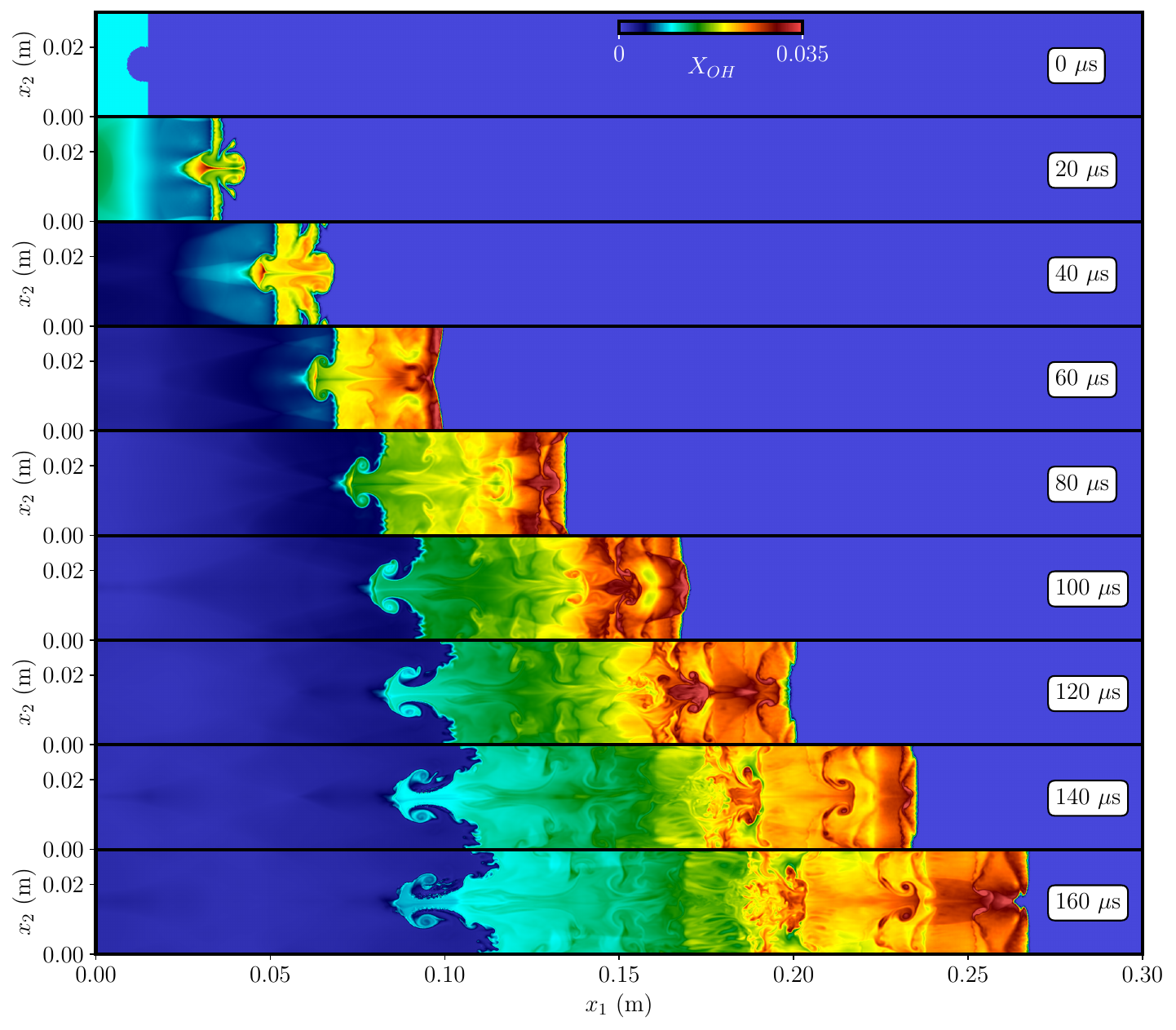}
\par\end{centering}
\caption{\label{fig:XOH-temporal-history}$X_{OH}$ distributions along the
center $x_{3}=0.015\text{ mm}$ plane for a three-dimensional moving
detonation wave computed with $p=2$ and $4.8h$, where $h=9\times10^{-5}$
m. The initial conditions are given in Equation~(\ref{eq:3D-detonation-initialization}).}
\end{figure}
\begin{figure}[tbph]
\begin{centering}
\includegraphics[width=0.9\columnwidth]{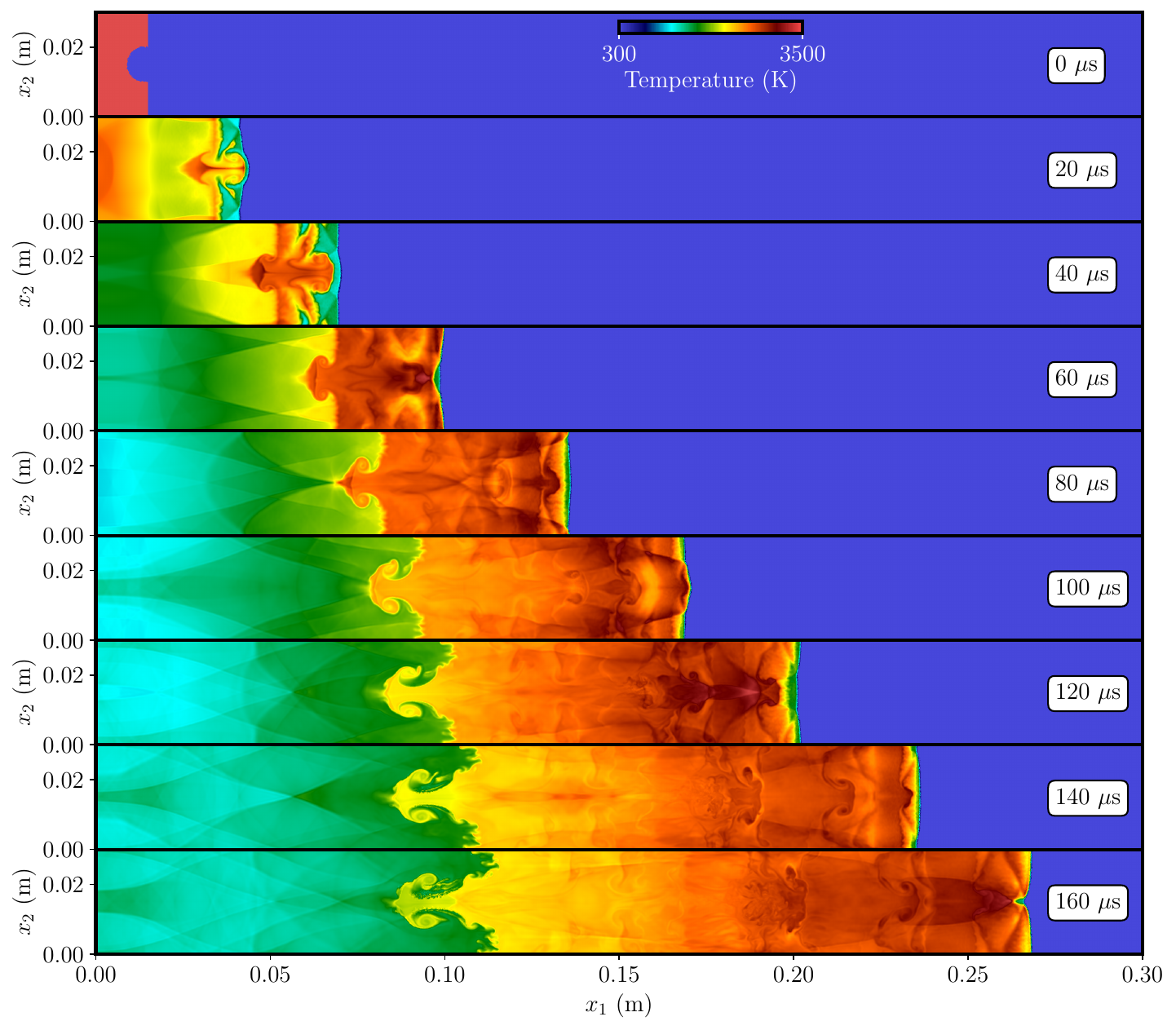}
\par\end{centering}
\caption{\label{fig:temperature-temporal-history}Temperature distributions
along the center $x_{3}=0.015\text{ mm}$ plane for a three-dimensional
moving detonation wave computed with $p=2$ and $4.8h$, where $h=9\times10^{-5}$
m. The initial conditions are given in Equation~(\ref{eq:3D-detonation-initialization}).}
\end{figure}

Figure~\ref{fig:3D-detonation-planes} displays $X_{OH}$ and temperature
distributions along various $x_{1}x_{2}$-planes at $t=176\:\mu\mathrm{s}$,
shortly before the detonation front collides with the wall. The distributions
along the corresponding $x_{1}x_{3}$-planes are qualitatively very
similar and therefore omitted for brevity. In the post-shock region,
$X_{OH}$ and temperature generally decrease with distance from the
shock front. Nevertheless, small pockets of low OH mole fraction,
surrounded by regions of higher OH mole fraction, can be observed.
These results highlight the multidimensional nature of this flow and
further illustrate that the complex flow topology is well-captured
using the proposed formulation.

\begin{figure}[tbph]
\begin{centering}
\subfloat[$X_{OH}$ ]{\begin{centering}
\includegraphics[width=0.9\linewidth]{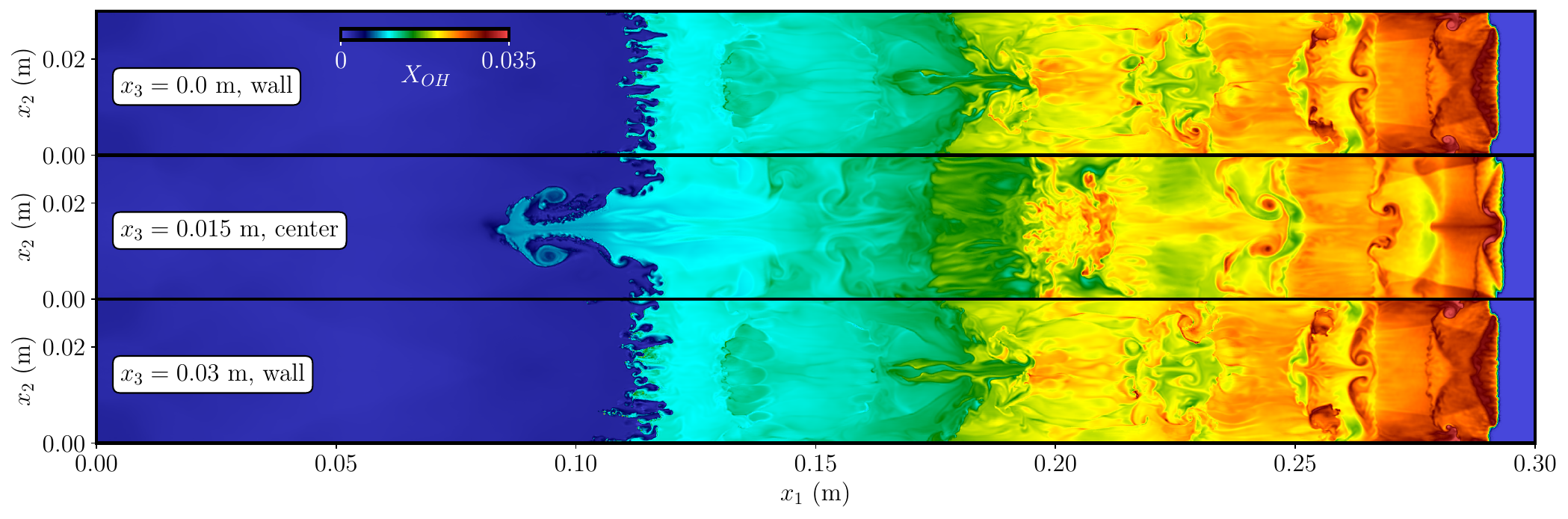}
\par\end{centering}
}\hfill{}\subfloat[Temperature]{\begin{centering}
\includegraphics[width=0.9\linewidth]{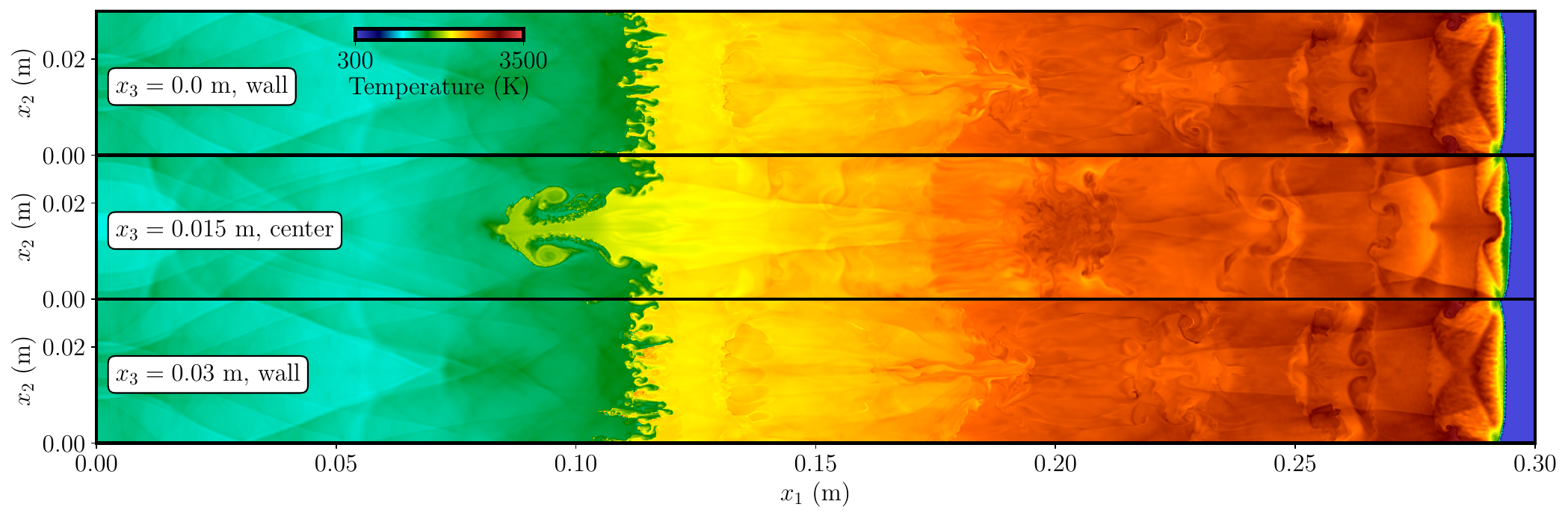}
\par\end{centering}
}
\par\end{centering}
\caption{\label{fig:3D-detonation-planes}$X_{OH}$ and temperature distributions
along various $x_{1}x_{2}$-planes at $t=176\:\mu\mathrm{s}$ for
a three-dimensional moving detonation wave computed with $p=2$ and
$4.8h$, where $h=9\times10^{-5}$ m. The initial conditions are given
in Equation~(\ref{eq:3D-detonation-initialization}). }
\end{figure}

Figure~\ref{fig:pressure-front} presents several instantaneous pressure
isosurfaces colored by the quantity $x_{1}-\overline{x}_{1}$, where
$\overline{x}_{1}$ is the mean $x_{1}$-coordinate of the given isosurface,
in order to illustrate the structure of the detonation front, which
is characterized by two orthogonal, two-dimensional waves. Two vertical
triple-point lines and two horizontal triple-point lines move in-phase
with each other in the horizontal and vertical directions, respectively,
indicating an in-phase rectangular mode~\citep{Tsu02}. Each pair
of triple-point lines propagates outwards, collides with the boundary
walls, and then reflects inwards. Subsequently, they collide with
each other and then reflect outwards, continuing in a periodic manner. 

\begin{figure}[tbph]
\begin{centering}
\includegraphics[width=0.9\columnwidth]{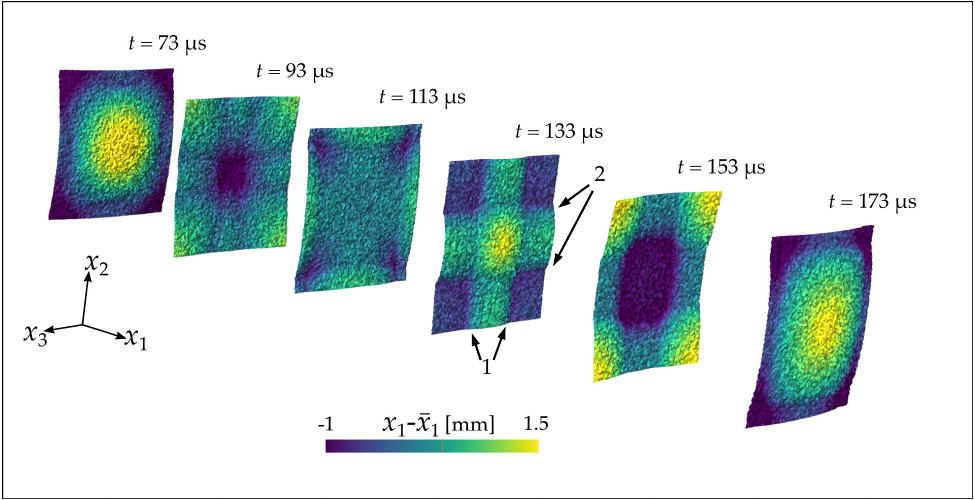}
\par\end{centering}
\caption{\label{fig:pressure-front}Instantaneous pressure isosurfaces colored
by the quantity $x_{1}-\overline{x}_{1}$, where $\overline{x}_{1}$
is the mean $x_{1}$-coordinate of the given isosurface, at various
times for a three-dimensional moving detonation wave computed with
$p=2$ and $4.8h$, where $h=9\times10^{-5}$ m. The ``1'' arrows
indicate the vertical triple-point lines, and the ``2'' arrows indicate
the horizontal triple-point lines. The initial conditions are given
in Equation~(\ref{eq:3D-detonation-initialization}).}
\end{figure}

Distributions of maximum-pressure history along various $x_{1}x_{2}$-planes
and $x_{1}x_{3}$-planes at $t=176\:\mu\mathrm{s}$ are given in Figure~\ref{fig:3D-detonation-pmax-planes}.
Approximately two and a half cells are observed in the region $x_{1}>0.15\text{ mm}$.
High-pressure explosions occur when the triple-point lines collide
with each other and with the corners. ``Slapping'' waves result
from reflections of the triple-point lines, indicated with the white
arrow in Figure~\ref{fig:3D-detonation-pmax-planes-xy} (top)~\citep{Tsu02}.
The $x_{1}x_{2}$-distributions are nearly identical to the corresponding
$x_{1}x_{3}$-distributions. Figure~\ref{fig:pmax-isosurfaces} shows
isosurfaces of maximum-pressure history for $x_{1}\in\left[0.236,0.292\right]$
m, which illustrate, from a three-dimensional perspective, not only
the temporal evolution of triple-point-line intersections, but also
the slapping waves along the walls and the strong, high-pressure explosions.
Overall symmetry in the $x_{2}$- and $x_{3}$-directions is observed.
Figures~\ref{fig:3D-detonation-pmax-planes} and~\ref{fig:pmax-isosurfaces}
further confirm that the detonation front is characterized by an in-phase
rectangular mode. 

\begin{figure}[tbph]
\begin{centering}
\subfloat[$x_{1}x_{2}$-planes\label{fig:3D-detonation-pmax-planes-xy}]{\begin{centering}
\includegraphics[width=0.9\linewidth]{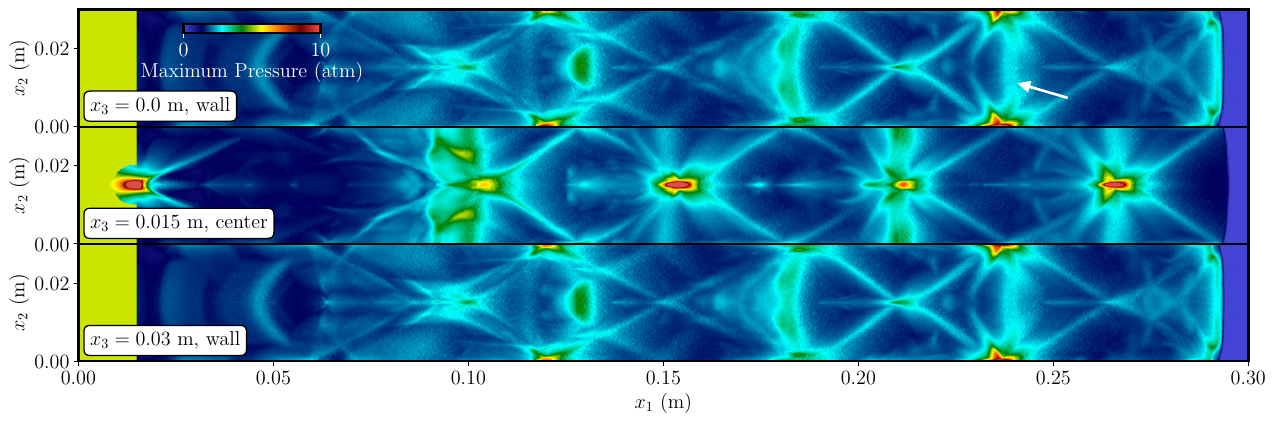}
\par\end{centering}
}\hfill{}\subfloat[$x_{1}x_{3}$-planes]{\begin{centering}
\includegraphics[width=0.9\linewidth]{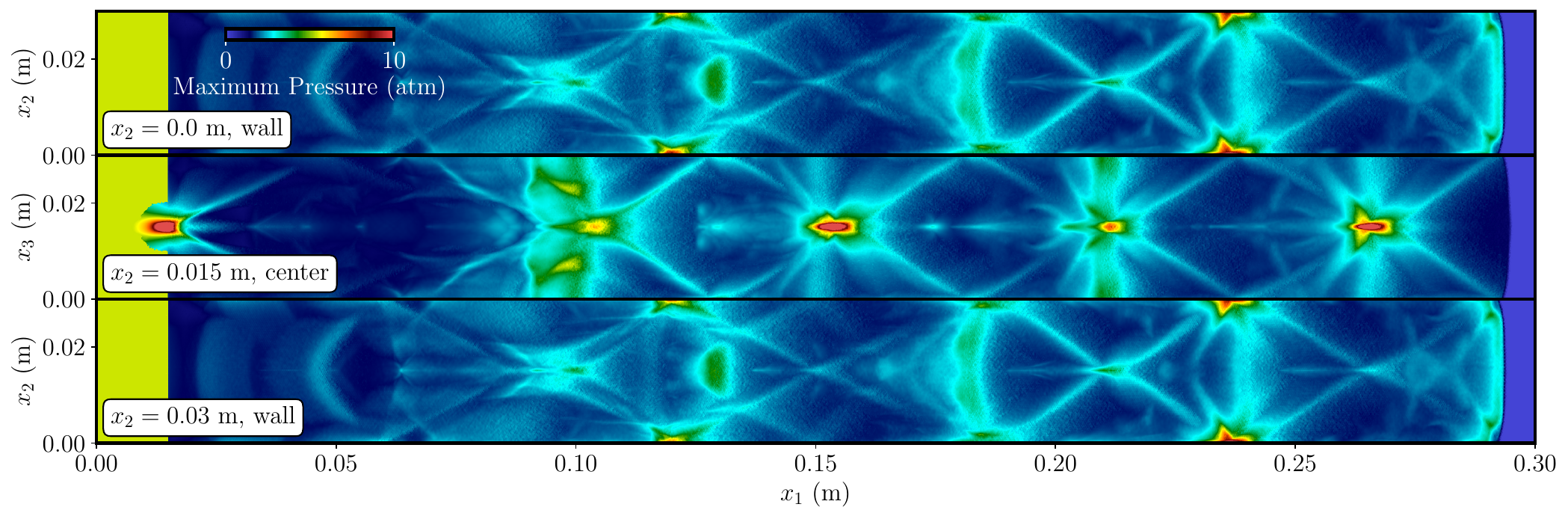}
\par\end{centering}
}
\par\end{centering}
\caption{\label{fig:3D-detonation-pmax-planes}Distributions of maximum-pressure
history along various $x_{1}x_{2}$-planes and $x_{1}x_{3}$-planes
at $t=176\:\mu\mathrm{s}$ for a three-dimensional moving detonation
wave computed with $p=2$ and $4.8h$, where $h=9\times10^{-5}$ m.
The white arrow in Figure~\ref{fig:3D-detonation-pmax-planes-xy}
(top) indicates a ``slapping'' wave. The initial conditions are
given in Equation~(\ref{eq:3D-detonation-initialization}).}
\end{figure}
\begin{figure}[tbph]
\begin{centering}
\includegraphics[width=0.75\columnwidth]{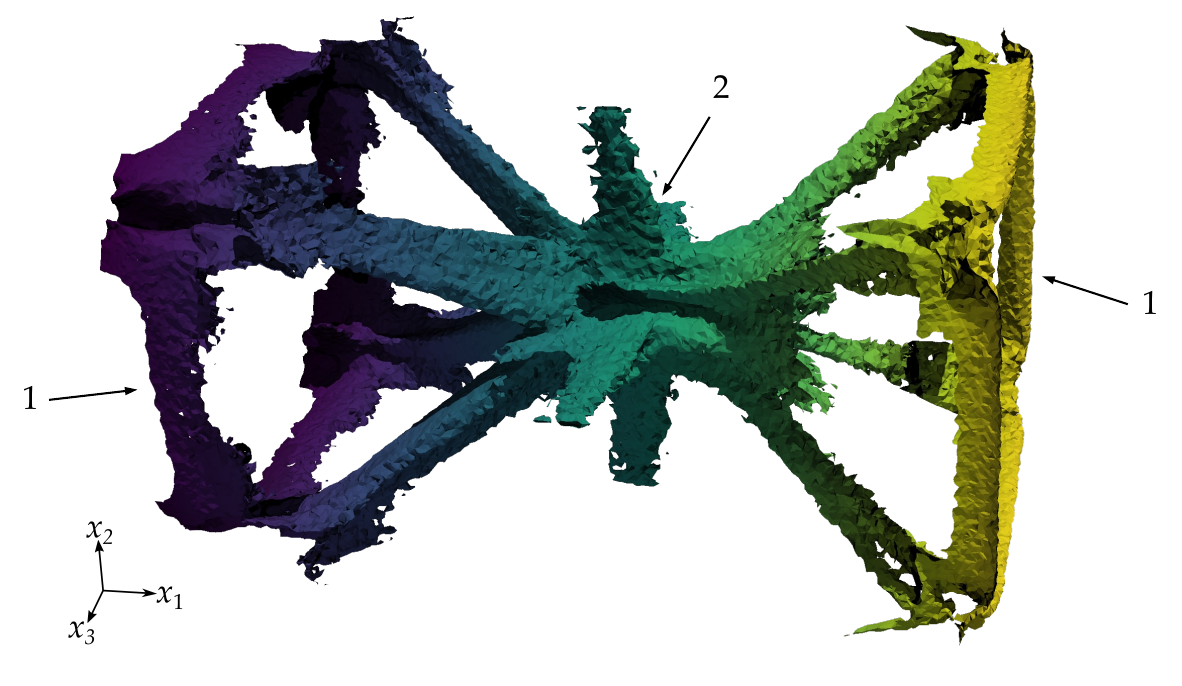}
\par\end{centering}
\caption{\label{fig:pmax-isosurfaces}Isosurfaces of maximum-pressure history
at $t=176\:\mu\mathrm{s}$ for $x_{1}\in\left[0.236,0.292\right]$
m for a three-dimensional moving detonation wave computed with $p=2$
and $4.8h$, where $h=9\times10^{-5}$ m. The isosurfaces are colored
by $x_{1}$-position. Each of the ``1'' arrows indicates a slapping
wave, while the ``2'' arrow indicates the high-pressure explosion
resulting from triple-point-line collisions. The initial conditions
are given in Equation~(\ref{eq:3D-detonation-initialization}).}
\end{figure}

\section{Concluding remarks}

In this second part of our two-part paper, we introduced a positivity-preserving,
entropy-bounded, multidimensional DG methodology for the chemically
reacting, compressible Euler equations, extending the one-dimensional
version presented in Part I~\citep{Chi22}. Compared to current multidimensional
positivity-preserving and/or entropy-bounded DG schemes in the literature,
restrictions on the quadrature rules, numerical flux function, polynomial
order of the geometric approximation, and physical modeling are relaxed.
In particular, the formulation is compatible with arbitrary, curved
elements, any invariant-region-preserving flux, and mixtures of thermally
perfect gases. A simple linear-scaling limiter enforces nonnegative
species concentrations, positive density, positive pressure, and bounded
entropy. Artificial viscosity aids in suppressing small-scale instabilities
not eliminated by the linear scaling. We discussed how to maintain
compatibility with the strategies introduced in~\citep{Joh20_2}
to maintain pressure equilibrium and avoid generating spurious pressure
oscillations. 

We first applied the formulation to the two-dimensional advection
of a thermal bubble. Pressure and velocity equilibrium were shown
to be maintained across various polynomial orders on curved grids.
The formulation was then applied to complex moving detonation waves
in two and three dimensions. In the two-dimensional case, a variety
of mesh sizes and polynomial orders was considered. In~\citep{Joh20_2},
a linear polynomial approximation of the solution and a very fine
mesh were required to obtain a stable solution. With the developed
formulation, we achieved robust and accurate solutions using high-order
polynomials and a relatively coarse mesh. Increasing the polynomial
order resulted in sharper predictions of the rich flow topology. Mass,
total energy, and atomic elements were shown to be conserved. An important
finding is that the entropy limiter was crucial for the coarser two-dimensional
detonation problems; without it (i.e., only the positivity-preserving
limiter was applied), the nonlinear solver during the reaction step
can slow down substantially or even stall, due to the large undershoots
in temperature. In the three-dimensional detonation test case, we
demonstrated that our methodology can compute accurate and robust
solutions to large-scale reacting-flow problems. Future work will
entail the simulation of larger-scale detonation applications involving
more complex geometries. 

\section*{Acknowledgments}

This work is sponsored by the Office of Naval Research through the
Naval Research Laboratory 6.1 Computational Physics Task Area. Discussions
with Dr. Brian Taylor are gratefully acknowledged.

\bibliographystyle{elsarticle-num}
\bibliography{citations}

\appendix

\section{Effect of over-integration on limiting frequency~\label{sec:Effect-of-over-integration}}

\RevisionTextThree{We investigate the effect of over-integation on the frequency of limiter
activation in a one-dimensional test case involving the advection
of a low-density Gaussian wave. This test case was originally introduced
by Trojak and Dzanic~\citep{Tro23} and computed in Part I~\citep{Chi22},
where optimal convergence was observed. For brevity, only $p=2$ is
considered. A periodic domain $\Omega=[-0.5,0.5]\:\mathrm{m}$ is
initialized in nondimensional form as
\begin{eqnarray}
v & = & 1,\nonumber \\
Y_{1} & = & \frac{1}{2}\left[\sin\left(2\pi x\right)+1\right],\nonumber \\
Y_{2} & = & 1-Y_{1},\label{eq:Gaussian-wave}\\
\rho & = & \exp\left(-\sigma x^{2}\right)+4\epsilon,\nonumber \\
P & = & 2\epsilon,\nonumber 
\end{eqnarray}
where $\sigma=500$ and $\epsilon=10^{-12}$. The thermodynamic relations
for the two fictitious species considered here are given by
\begin{align*}
\frac{W_{1}c_{p,1}\left(T\right)}{R^{0}} & =3.5,\quad\frac{W_{1}h_{1}\left(T\right)}{R^{0}}=3.5T_{r}\widehat{T},\quad\frac{W_{1}s_{1}^{o}}{R^{0}}=3.5\ln\widehat{T},\\
\frac{W_{2}c_{p,2}\left(T\right)}{R^{0}} & =2.491,\quad\frac{W_{2}h_{2}\left(T\right)}{R^{0}}=2.491T_{r}\widehat{T},\quad\frac{W_{2}s_{2}^{o}}{R^{0}}=2.491\ln\widehat{T}.
\end{align*}
Five element sizes are considered: $h$, $h/2$, $h/4$, $h/8$, and
$h/16$, where $h=0.04$. The solution is integrated in time using
the SSPRK3 time stepping scheme. The $L^{2}$ error after one advection
period is calculated in terms of the following normalized state variables:
\[
\widehat{\rho v}_{k}=\frac{1}{\sqrt{\rho_{r}P_{r}}}\rho v_{k},\quad\widehat{\rho e}_{t}=\frac{1}{P_{r}}\rho e_{t},\quad\widehat{C}_{i}=\frac{R^{0}T_{r}}{P_{r}}C_{i},
\]
where $\rho_{r}=1\,\mathrm{kg\cdot}\mathrm{m}^{-3}$, $P_{r}=101325\,\mathrm{Pa}$,
and $T_{r}=1000$ K.  Tables~\ref{tab:limiter-frequency-no-overintegration}
and~\ref{tab:limiter-frequency-overintegration} provide the percentage
of RK stages in which the positivity-preserving limiter and the entropy
limiter are activated without and with over-integration, respectively.
Note that the $L^{2}$ errors obtained with and without over-integration
are nearly identical (not shown for brevity). In general, the percentage
of limiting decreases with over-integration, apart from the activation
of the positivity-preserving limiter in the $h/4$ case. One likely
reason for this higher frequency of limiter activation is that the
number of points in $\mathcal{D}_{\kappa}$ increases in the case
of over-integration.}

\begin{table}
\begin{centering}
\caption{\protect\RevisionTextThree{Percentage of RK stages in which the positivity-preserving limiter
and entropy limiter are activated without over-integration. Here,
the positivity-preserving limiter corresponds to Steps 1 to 3 in Section~\ref{subsec:limiting-procedure},
and the entropy limiter corresponds to Step 4.\label{tab:limiter-frequency-no-overintegration}}}
\par\end{centering}
\centering{}%
\begin{tabular}{cccccc}
\hline 
 & $h$ & $h/2$ & $h/4$ & $h/8$ & $h/16$\tabularnewline
\hline 
Positivity-preserving limiter & 40.8\% & 51.9\% & 38.8\% & 0.014\% & 0.0034\%\tabularnewline
Entropy limiter & 18.3\% & 36.9\% & 0.61\% & 0\% & 0\%\tabularnewline
\hline 
\end{tabular}
\end{table}

\begin{table}
\begin{centering}
\caption{\protect\RevisionTextThree{Percentage of RK stages in which the positivity-preserving limiter
and entropy limiter are activated with over-integration. Here, the
positivity-preserving limiter corresponds to Steps 1 to 3 in Section~\ref{subsec:limiting-procedure},
and the entropy limiter corresponds to Step 4.\label{tab:limiter-frequency-overintegration}}}
\par\end{centering}
\centering{}%
\begin{tabular}{cccccc}
\hline 
 & $h$ & $h/2$ & $h/4$ & $h/8$ & $h/16$\tabularnewline
\hline 
Positivity-preserving limiter & 35.2\% & 51.0\% & 64.3\% & 0.012\% & 0.0026\%\tabularnewline
Entropy limiter & 13.8\% & 24.7\% & 0.22\% & 0\% & 0\%\tabularnewline
\hline 
\end{tabular}
\end{table}

\section{Supporting lemma~\label{sec:supporting-lemma}}

In the following, let $\Delta\mathcal{F}\in\mathbb{R}^{m}$ denote
the quantity
\[
\Delta\mathcal{F}=\left(\Delta\mathcal{F}_{\rho v},\Delta\mathcal{F}_{\rho e_{t}},\Delta\mathcal{F}_{C_{1}},\ldots,\Delta\mathcal{F}_{C_{n_{s}}}\right)^{T},
\]
where $\Delta\mathcal{F}_{\rho v}\in\mathbb{R}^{d}$ and the remaining
entries are in $\mathbb{R}$. The proof below is similar to that in~\citep[Lemma 6]{Zha17}.
\begin{lem}
\label{lem:alpha-constraints}Assume that $y=\left(\rho v,\rho e_{t},C_{1},\ldots,C_{n_{s}}\right)^{T}$
is in $\mathcal{G}$. Then $\check{y}=y-\alpha^{-1}\Delta\mathcal{F}$,
where $\alpha>0$, is also in $\mathcal{G}$ under the following conditions:

\begin{equation}
\alpha>\alpha^{*}\left(y,\Delta\mathcal{F}\right)=\max\left\{ \max_{i=1,\ldots,n_{s}}\frac{\mathcal{F}_{C_{i}}\cdot n}{C_{i}},\left.\alpha_{T}\right|_{\left(y,\Delta\mathcal{F}\right)},0\right\} ,\label{eq:alpha-constraint}
\end{equation}
where
\begin{equation}
\alpha_{T}=\begin{cases}
\frac{-\mathsf{b}+\sqrt{\mathsf{b}^{2}-4\rho^{2}u\mathsf{g}}}{2\rho^{2}u}, & \mathsf{b}^{2}-4\rho^{2}u\mathsf{g}\geq0\\
0, & \mathrm{otherwise}
\end{cases},\label{eq:alpha_T}
\end{equation}
$\mathsf{b}=-\rho e_{t}\mathsf{M}-\rho\Delta\mathcal{F}_{\rho e_{t}}+\rho v\cdot\Delta\mathcal{F}_{\rho v}+2\rho u_{0}\mathsf{M}$,
and $\mathsf{g}=\mathsf{M}\Delta\mathcal{F}_{\rho e_{t}}-\frac{1}{2}\left|\Delta\mathcal{F}_{\rho v}\right|^{2}-u_{0}\mathsf{M}^{2}$,
and $\mathsf{M}=\sum_{i=1}^{n_{s}}W_{i}\Delta\mathcal{F}_{C_{i}}$.
\end{lem}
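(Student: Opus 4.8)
The plan is to reduce the claim to a single scalar inequality and then read it off from the roots of a quadratic in $\alpha^{-1}$. First I would exploit the hypothesis $\Delta\mathcal{F}_{C_i}=0$: the perturbation leaves every species concentration untouched, so the concentration components of $\check{y}$ equal those of $y$ and $\check{C}_i=C_i\geq0$. Since $\rho=\sum_i W_iC_i$ and $\rho u_0=\sum_i W_iC_i b_{i0}$ depend only on the concentrations, we also get $\check{\rho}=\rho>0$ and that the shift $\rho u_0$ is unchanged. Thus the density and nonnegative-concentration requirements of $\mathcal{G}$ hold automatically for $\check{y}$, and the entire content of the lemma collapses to verifying the single condition $\rho u^{*}(\check{y})>0$.

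Next I would make $\rho u^{*}(\check{y})$ explicit. Using $\rho u^{*}=\rho e_{t}-|\rho v|^{2}/(2\rho)-\rho u_{0}$ and substituting the perturbed momentum $\rho v-\alpha^{-1}\Delta\mathcal{F}_{\rho v}$ and energy $\rho e_{t}-\alpha^{-1}\Delta\mathcal{F}_{\rho e_{t}}$ (with $\rho$ and $\rho u_{0}$ held fixed), I would expand the square $|\rho v-\alpha^{-1}\Delta\mathcal{F}_{\rho v}|^{2}$ and collect terms in the single variable $t:=\alpha^{-1}$. Multiplying through by $\rho>0$, which does not change the relevant sign, yields
\[
\rho\,\rho u^{*}(\check{y})=\mathsf{g}\,t^{2}+\mathsf{b}\,t+\rho^{2}u^{*},
\]
with $\mathsf{b}=-\rho\Delta\mathcal{F}_{\rho e_{t}}+\rho v\cdot\Delta\mathcal{F}_{\rho v}$ and $\mathsf{g}=-\tfrac{1}{2}|\Delta\mathcal{F}_{\rho v}|^{2}$ exactly as in the statement (so the $u$ in $\alpha_{T}$ is the shifted energy $u^{*}$). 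Denote this quadratic $q(t)$.

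The remaining work is a sign analysis of $q$. Because $y\in\mathcal{G}$, $q(0)=\rho^{2}u^{*}>0$, and because $\mathsf{g}\leq0$ the parabola opens downward (or degenerates to an affine function). The discriminant $\mathsf{b}^{2}-4\rho^{2}u^{*}\mathsf{g}\geq\mathsf{b}^{2}\geq0$ is always nonnegative under the hypotheses, so $q$ has real roots; since $q(0)>0$ with nonpositive leading coefficient, the roots straddle the origin and $q(t)>0$ precisely for $t$ below the positive root $t_{+}=(-\mathsf{b}-\sqrt{\mathsf{b}^{2}-4\rho^{2}u^{*}\mathsf{g}})/(2\mathsf{g})$. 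I would then rationalize $t_{+}$ by multiplying by the conjugate to obtain $1/t_{+}=(-\mathsf{b}+\sqrt{\mathsf{b}^{2}-4\rho^{2}u^{*}\mathsf{g}})/(2\rho^{2}u^{*})=\alpha_{T}$, so the requirement $t<t_{+}$ is exactly $\alpha=1/t>\alpha_{T}$. Finally I would treat the degenerate branch $\mathsf{g}=0$ (where $q$ is affine and the same formula reappears in the limit) and note that the numerator of $\alpha_{T}$ is always nonnegative, whence $\alpha^{*}=\max\{\alpha_{T},0\}=\alpha_{T}$ and $\alpha>\alpha^{*}$ also forces $\alpha>0$.

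I expect the main obstacle to be bookkeeping rather than conceptual: selecting the correct positive root of the downward-opening parabola and tracking how dividing by the nonpositive quantity $\mathsf{g}$ and by $\alpha$ reverses inequalities, then checking termwise that the rationalized endpoint matches $\alpha_{T}$. The reason the argument closes cleanly is structural: $\rho u^{*}$ is concave in the state and quadratic in the momentum, so the ray $\alpha\mapsto\check{y}$ meets the region $\{\rho u^{*}>0\}$ in an interval whose boundary is precisely $\alpha_{T}$, mirroring \citep[Lemma 6]{Zha17}.
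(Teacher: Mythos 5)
Your proof is correct and takes essentially the same route as the paper's: reduce membership in $\mathcal{G}$ to positivity of $\rho u^{*}$ (concentrations, density, and $\rho u_{0}$ are untouched because $\Delta\mathcal{F}_{C_{i}}=0$), expand that quantity as a quadratic with coefficients $\mathsf{g}$, $\mathsf{b}$, $\rho^{2}u^{*}$, and require $\alpha$ to exceed the relevant root; whether one works in $t=\alpha^{-1}$ (your downward-opening parabola plus rationalization of $t_{+}$) or multiplies by $\alpha^{2}$ and works in $\alpha$ (the paper's convex quadratic, whose larger root is $\alpha_{T}$ directly) is immaterial. If anything, your bookkeeping is more consistent than the paper's: the paper's displayed quadratic $\rho^{2}u\alpha^{2}-\mathsf{b}\alpha+\mathsf{g}$ contains a sign slip (with its own definition of $\mathsf{b}$, the linear term is $+\mathsf{b}\alpha$, which is exactly what makes the larger root coincide with the stated $\alpha_{T}$), and your readings that $u$ in $\alpha_{T}$ means the shifted $u^{*}$ and that the discriminant is automatically nonnegative here (so the ``otherwise'' branch of $\alpha_{T}$ is vacuous when $\mathsf{g}\leq0$) are both correct.
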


\begin{proof}
$\check{y}=y-\alpha^{-1}\Delta\mathcal{F}$ can be expanded as
\[
\begin{split}\check{y}= & \left(\check{\rho v},\check{\rho e_{t}},\check{C_{1}},\ldots,\check{C}_{n_{s}}\right)^{T}\\
= & \left(\rho v-\alpha^{-1}\Delta\mathcal{F}_{\rho v},\rho e_{t}-\alpha^{-1}\Delta\mathcal{F}_{\rho e_{t}},C_{1}-\alpha^{-1}\Delta\mathcal{F}_{C_{1}},\ldots,C_{n_{s}}-\alpha^{-1}\Delta\mathcal{F}_{C_{n_{s}}}\right)^{T}.
\end{split}
\]
First, we focus on positivity of density and species concentrations.
For the $i$th species, $C_{i}-\alpha^{-1}\mathcal{F}_{C_{i}}\cdot n>0$
if $\alpha>\max\left\{ \left(\mathcal{F}_{C_{i}}\cdot n\right)/C_{i},0\right\} $.
Accounting for all species yields
\[
\alpha>\max\left\{ \max_{i=1,\ldots,n_{s}}\frac{\mathcal{F}_{C_{i}}\cdot n}{C_{i}},0\right\} .
\]
Density is then also positive. 

Next, we focus on positivity of temperature. For a given $y=\left(\rho v,\rho e_{t},C_{1},\ldots,C_{n_{s}}\right)^{T}$,
let $Z(y)$ be defined as
\begin{equation}
Z(y)=\rho^{2}u^{*}(y)=\rho(y)\rho e_{t}-\left|\rho v\right|^{2}/2-\rho^{2}u_{0}(y).\label{eq:Z-definition}
\end{equation}
Note that if $Z(y)>0$, then $T(y)>0$. $Z\left(\check{y}\right)$
can be expressed as
\begin{align*}
Z\left(y-\alpha^{-1}\Delta\mathcal{F}\right)= & \sum_{i=1}^{n_{s}}W_{i}\left(C_{i}-\alpha^{-1}\Delta\mathcal{F}_{C_{i}}\right)\left(\rho e_{t}-\alpha^{-1}\Delta\mathcal{F}_{\rho e_{t}}\right)\\
 & -\frac{1}{2}\left|\rho v-\alpha^{-1}\Delta\mathcal{F}_{\rho v}\right|^{2}-\left[\sum_{i=1}^{n_{s}}W_{i}\left(C_{i}-\alpha^{-1}\Delta\mathcal{F}_{C_{i}}\right)\right]^{2}u_{0},
\end{align*}
which, after multiplying both sides by $\alpha^{2}$, can be rewritten
as 
\begin{align}
\alpha^{2}Z\left(y-\alpha^{-1}\Delta\mathcal{F}\right)= & \rho^{2}u\alpha^{2}-\mathsf{b}\alpha+\mathsf{g},\label{eq:alpha-quadratic-form}
\end{align}
where $\mathsf{b}=-\rho e_{t}\mathsf{M}-\rho\Delta\mathcal{F}_{\rho e_{t}}+\rho v\cdot\Delta\mathcal{F}_{\rho v}+2\rho u_{0}\mathsf{M}$,
$\mathsf{g}=\mathsf{M}\Delta\mathcal{F}_{\rho e_{t}}-\frac{1}{2}\left|\Delta\mathcal{F}_{\rho v}\right|^{2}-u_{0}\mathsf{M}^{2}$,
and $\mathsf{M}=\sum_{i=1}^{n_{s}}W_{i}\Delta\mathcal{F}_{C_{i}}$.
Setting the RHS of Equation~(\ref{eq:alpha-quadratic-form}) equal
to zero yields a quadratic equation with $\alpha$ as the unknown.
Since $\rho^{2}u$ is positive, the quadratic equation is convex.
As such, if $\mathsf{b}^{2}-4\rho^{2}u\mathsf{g}<0$, then no real
roots exist, and $Z\left(y-\alpha^{-1}\Delta\mathcal{F}\right)>0$
for all $\alpha\neq0$; otherwise, at least one real root exists,
in which case a sufficient condition to ensure $Z\left(y-\alpha^{-1}\Delta\mathcal{F}\right)>0$
is $\alpha>\alpha_{0}$, where $\alpha_{0}$ is given by
\[
\alpha_{0}=\max\left\{ \frac{-\mathsf{b}+\sqrt{\mathsf{b}^{2}-4\rho^{2}u\mathsf{g}}}{2\rho^{2}u},0\right\} .
\]
\end{proof}
\begin{rem}
\label{rem:simpler-alpha-constraint}If $\Delta\mathcal{F}_{C_{i}}=0,$
for all $i$, then $\alpha^{*}$ in (\ref{eq:alpha-constraint}) can
be simplified to 
\[
\alpha^{*}\left(y,\Delta\mathcal{F}\right)=\max\left\{ \left.\alpha_{T}\right|_{\left(y,\Delta\mathcal{F}\right)},0\right\} .
\]
\end{rem}

\section{Alternative approach with over-integration\label{sec:alternative-approach-with-over-integration}}

Here, we present an alternative approach to guarantee $\overline{y}_{\kappa}^{j+1}\in\mathcal{G}_{s_{b}}$,
even when over-integration with the modified flux interpolation~(\ref{eq:modified-flux-projection})
for preserving pressure equilibrium is employed. The main idea is
to construct a separate polynomial, denoted $\check{y}_{\kappa}$,
that satisfies the following:
\begin{align*}
\overline{\check{y}}_{\kappa} & =\overline{y}_{\kappa},\\
\check{y}_{\kappa}\left(\xi\left(\zeta_{l}^{\left(f\right)}\right)\right) & =\widetilde{y}_{\kappa}\left(\xi\left(\zeta_{l}^{\left(f\right)}\right)\right),\quad f=1,\ldots,n_{f},\quad l=1,\ldots,n_{q,f}^{\partial},
\end{align*}
such that the scheme satisfied by the element averages becomes
\begin{eqnarray}
\overline{y}_{\kappa}^{j+1} & = & \overline{y}_{\kappa}^{j}-\sum_{f=1}^{n_{f}}\sum_{l=1}^{n_{q,f}^{\partial}}\frac{\Delta t\nu_{f,l}^{\partial}}{|\kappa|}\mathcal{F}^{\dagger}\left(\widetilde{y}_{\kappa}^{j}\left(\xi\left(\zeta_{l}^{\left(f\right)}\right)\right),\widetilde{y}_{\kappa^{(f)}}^{j}\left(\xi\left(\zeta_{l}^{\left(f\right)}\right)\right),n\left(\zeta_{l}^{\left(f\right)}\right)\right)\nonumber \\
 & = & \overline{\check{y}}_{\kappa}^{j}-\sum_{f=1}^{n_{f}}\sum_{l=1}^{n_{q,f}^{\partial}}\frac{\Delta t\nu_{f,l}^{\partial}}{|\kappa|}\mathcal{F}^{\dagger}\left(\check{y}_{\kappa}^{j}\left(\xi\left(\zeta_{l}^{\left(f\right)}\right)\right),\check{y}_{\kappa^{(f)}}^{j}\left(\xi\left(\zeta_{l}^{\left(f\right)}\right)\right),n\left(\zeta_{l}^{\left(f\right)}\right)\right)\nonumber \\
 & = & \sum_{v=1}^{n_{q}}\theta_{v}\check{y}_{\kappa}^{j}\left(\xi_{v}\right)+\sum_{f=1}^{n_{f}}\sum_{l=1}^{n_{q,f}^{\partial}}\left[\theta_{f,l}\check{y}_{\kappa}^{j}\left(\xi\left(\zeta_{l}^{\left(f\right)}\right)\right)-\frac{\Delta t\nu_{f,l}^{\partial}}{|\kappa|}\mathcal{F}^{\dagger}\left(\check{y}_{\kappa}^{j}\left(\xi\left(\zeta_{l}^{\left(f\right)}\right)\right),\check{y}_{\kappa^{(f)}}\left(\xi\left(\zeta_{l}^{\left(f\right)}\right)\right),n\left(\zeta_{l}^{\left(f\right)}\right)\right)\right]\nonumber \\
 & = & \sum_{v=1}^{n_{q}}\theta_{v}\check{y}_{\kappa}^{j}\left(\xi_{v}\right)+\sum_{f=1}^{n_{f}-1}\sum_{l=1}^{n_{q,f}^{\partial}}\theta_{f,l}\check{A}_{f,l}+\sum_{l=1}^{N-1}\theta_{n_{f},l}\check{B}_{l}+\theta_{n_{f},N}\check{C},\label{eq:fully-discrete-form-average-2D-modified-v2}
\end{eqnarray}
where the definitions of $\check{A}_{f,l}$, $\check{B}_{l}$, and
$\check{C}$ can be deduced based on Section~\ref{subsec:entropy-bounded-high-order-DG-2D}.
Thus, $\overline{y}_{\kappa}^{j+1}$ can be written as a convex combination
of three-point systems and pointwise values, and an analogous version
of Theorem~\ref{thm:CFL-condition-2D} holds. The degree of $\check{y}_{\kappa}$
may be different from that of $y_{\kappa}$, provided that the volume
quadrature rule in Equation~(\ref{eq:fully-discrete-form-average-2D-modified-v2})
is sufficiently accurate. First, we propose a strategy to construct
$\check{y}_{\kappa}$ on two-dimensional quadrilateral elements. As
an illustrative example, consider the Gauss-Lobatto nodal set for
$\check{y}_{\kappa}$ displayed in Figure~\ref{fig:quadrilateral_nodal_set}.
Nodes 1 to 8 comprise the integration points used in the surface integrals
in Equation~(\ref{eq:fully-discrete-form-average-2D-modified-v2}),
whereas Node 9 serves as a degree of freedom to ensure $\overline{\check{y}}_{\kappa}=\overline{y}_{\kappa}$.
The coefficients of $\check{y}_{\kappa}$ are specified as
\begin{align*}
\check{y}_{\kappa}\left(x_{k}\right) & =\widetilde{y}_{\kappa}\left(x_{k}\right),k=1,\ldots,8,\\
\check{y}_{\kappa}\left(x_{9}\right) & =\frac{\overline{y}_{\kappa}|\kappa|-\sum_{v=1}^{n_{q}}\left|J_{\kappa}(\xi_{v})\right|w_{v}\sum_{k=1}^{8}\widetilde{y}_{\kappa}\left(x_{k}\right)\varphi_{k}\left(\xi_{v}\right)}{\sum_{v=1}^{n_{q}}\left|J_{\kappa}(\xi_{v})\right|w_{v}\varphi_{9}\left(\xi_{v}\right)}\\
 & =\widetilde{y}_{\kappa}\left(x_{9}\right)+\frac{\sum_{v=1}^{n_{q}}\left|J_{\kappa}(\xi_{v})\right|w_{v}\left[\sum_{i=1}^{n_{b}}y_{\kappa}(x_{i})\phi_{i}(\xi_{v})-\sum_{k=1}^{9}\widetilde{y}_{\kappa}\left(x_{k}\right)\varphi_{k}\left(\xi_{v}\right)\right]}{\sum_{v=1}^{n_{q}}\left|J_{\kappa}(\xi_{v})\right|w_{v}\varphi_{9}\left(\xi_{v}\right)}.
\end{align*}
In general, $\check{y}_{\kappa}\left(x_{9}\right)$ is not expected
to differ significantly from $\widetilde{y}_{\kappa}\left(x_{9}\right)$
or $y_{\kappa}(x_{9})$; in fact, $\check{y}_{\kappa}\left(x_{9}\right)$
reduces to $\widetilde{y}_{\kappa}\left(x_{9}\right)$ as the discrepancies
between $y_{\kappa}(x)$ and $\widetilde{y}_{\kappa}(x)$ vanish.
If necessary, the limiting procedure in Section~\ref{subsec:limiting-procedure}
is applied to ensure that $\check{y}_{\kappa}^{j}(x)\in\mathcal{G}_{s_{b}},\;\forall x\in\mathcal{D_{\kappa}}$.
Note that it is possible for the limiter to modify the pressure at
the nodes; nevertheless, as observed in the thermal-bubble test case
in Part I~\citep{Chi22}, the limiter typically will not destroy
pressure equilibrium or cause large-scale pressure oscillations in
smooth regions of the flow. This will likely remain true when limiting
$\check{y}_{\kappa}^{j}(x)$, in part because $y_{\kappa}^{j}(x)$
will already have been limited.

\begin{figure}[tbph]
\begin{centering}
\includegraphics[width=0.4\columnwidth]{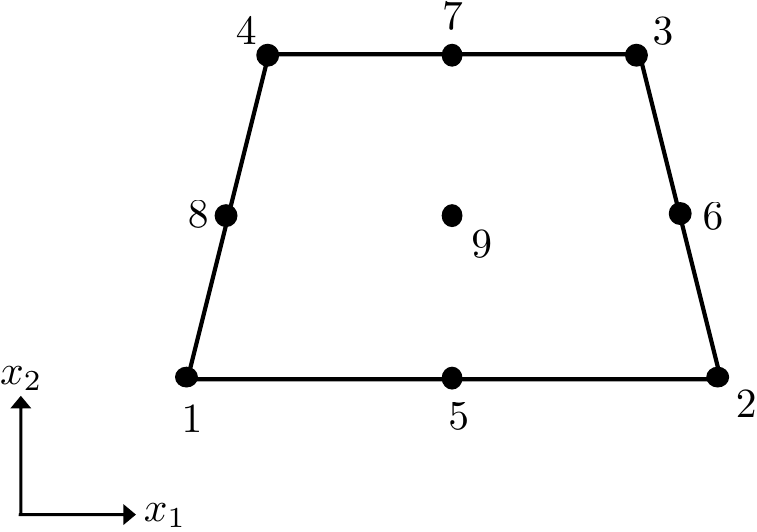}
\par\end{centering}
\caption{\label{fig:quadrilateral_nodal_set}Illustrative nodal set for $\check{y}_{\kappa}$
on two-dimensional quadrilateral elements.}
\end{figure}

Constructing $\check{y}_{\kappa}$ on two-dimensional triangular elements
can be done in a similar manner, provided that the nodal set for $\check{y}_{\kappa}$
includes the surface integration points and the corresponding surface
quadrature rules have positive weights. However, typical $p=2$ nodal
sets, which have six nodes, do not include any interior nodes; therefore,
there are no degrees of freedom to ensure $\overline{\check{y}}_{\kappa}=\overline{y}_{\kappa}$.
Here, we propose a more general strategy that makes use of the transformation
between the reference quadrilateral, $\widehat{\kappa}_{\mathrm{quad}}$,
which is a bi-unit square, and the reference triangle, $\widehat{\kappa}_{\mathrm{tri}}$,
which is an isosceles right triangle with side length of two, as shown
in Figure~\ref{fig:quadrilateral_triangle_transformation}~\citep{Dub91,Kir06}.
The bi-unit square can be mapped to the reference triangle as
\begin{align*}
\xi_{1} & =\frac{(1+\eta_{1})(1-\eta_{2})}{2}-1,\\
\xi_{2} & =\eta_{2},
\end{align*}
where $\eta\in\mathbb{R}^{2}$ are the reference coordinates of $\widehat{\kappa}_{\mathrm{quad}}$
and $\xi\in\mathbb{R}^{2}$ are the reference coordinates of $\widehat{\kappa}_{\mathrm{tri}}$.
The inverse mapping is given by
\begin{align*}
\eta_{1} & =2\frac{1+\xi_{1}}{1-\xi_{2}}-1,\\
\eta_{2} & =\xi_{2}.
\end{align*}
Consider the seven-node triangle in Figure~\ref{fig:triangle_nodal_set},
obtained by degeneration of the Gauss-Lobatto nine-node quadrilateral~\citep{Hug80}.
Specifically, Nodes 3, 4, and 7 of $\widehat{\kappa}_{\mathrm{quad}}$
are coalesced into Node 3 of $\widehat{\kappa}_{\mathrm{tri}}$, such
that
\begin{align*}
\check{y}_{\kappa}\left(\eta\right) & =\underset{k\neq3,4,7}{\sum_{k=1}^{9}}\check{y}_{\kappa}\left(x_{k}\right)\varphi_{k}\left(\eta\right)+\check{y}_{\kappa}\left(x_{3}\right)\left[\varphi_{3}\left(\eta\right)+\varphi_{4}\left(\eta\right)+\varphi_{7}\left(\eta\right)\right],\\
 & =\sum_{k=1}^{7}\check{y}_{\kappa}\left(x_{k}\right)\check{\varphi}_{k}\left(\eta\right),
\end{align*}
where $\left\{ \varphi_{1},\ldots,\varphi_{9}\right\} $ is ordered
according to the $\widehat{\kappa}_{\mathrm{quad}}$ node numbering
and $\left\{ \check{\varphi}_{1},\ldots,\check{\varphi}_{7}\right\} $
is ordered according to the $\widehat{\kappa}_{\mathrm{tri}}$ node
numbering. Note that $\check{\varphi}_{7}$ is equal to $\varphi_{3}+\varphi_{4}+\varphi_{7}$.
Nodes 1 to 6 of $\widehat{\kappa}_{\mathrm{tri}}$ make up the Gauss-Lobatto
points used in the surface integrals. $\overline{\check{y}}_{\kappa}$
can be computed using a sufficiently accurate quadrature rule for
quadrilaterals as~\citep{Kir06}
\begin{align*}
\overline{\check{y}}_{\kappa} & =\frac{1}{|\kappa|}\int_{\kappa}\check{y}_{\kappa}(x)dx=\frac{1}{|\kappa|}\int_{\widehat{\kappa}_{\mathrm{tri}}}\check{y}_{\kappa}(\xi)\left|J_{\kappa}(\xi)\right|d\xi\\
 & =\frac{1}{|\kappa|}\int_{\widehat{\kappa}_{\mathrm{quad}}}\check{y}_{\kappa}(\eta)\left|J_{\kappa}(\xi(\eta))\right|\left|J_{\widehat{\kappa}}(\eta)\right|d\eta\\
 & =\frac{1}{|\kappa|}\sum_{v=1}^{n_{q}}\check{y}_{\kappa}(\eta_{v})\left|J_{\kappa}(\xi(\eta_{v}))\right|\left|J_{\widehat{\kappa}}(\eta_{v})\right|w_{v},
\end{align*}
where $\left|J_{\widehat{\kappa}}(\eta)\right|$ is the Jacobian determinant
of the mapping from the reference quadrilateral to the reference triangle,
given by
\[
\left|J_{\widehat{\kappa}}(\eta)\right|=\frac{1-\eta_{2}}{2},
\]
which is positive everywhere in $\widehat{\kappa}_{\mathrm{tri}}$
except along the collapsed face, such that the analysis in Section~(\ref{subsec:entropy-bounded-high-order-DG-2D})
holds.  Using the $\widehat{\kappa}_{\mathrm{tri}}$ node numbering,
the coefficients of $\check{y}_{\kappa}$ are then prescribed as
\begin{align*}
\check{y}_{\kappa}\left(x_{k}\right) & =\widetilde{y}_{\kappa}\left(x_{k}\right),k=1,\ldots,6\\
\check{y}_{\kappa}\left(x_{7}\right) & =\frac{\overline{y}_{\kappa}|\kappa|-\sum_{v=1}^{n_{q}}\left|J_{\kappa}(\xi(\eta_{v}))\right|\left|J_{\widehat{\kappa}}(\eta_{v})\right|w_{v}\sum_{k=1}^{6}\widetilde{y}_{\kappa}\left(x_{k}\right)\check{\varphi}_{k}\left(\eta_{v}\right)}{\sum_{v=1}^{n_{q}}\left|J_{\kappa}(\xi(\eta_{v}))\right|\left|J_{\widehat{\kappa}}(\eta_{v})\right|w_{v}\check{\varphi}_{7}\left(\eta_{v}\right)}\\
 & =\widetilde{y}_{\kappa}\left(x_{7}\right)+\frac{\sum_{v=1}^{n_{q}}\left|J_{\kappa}(\xi(\eta_{v}))\right|\left|J_{\widehat{\kappa}}(\eta_{v})\right|w_{v}\left[\sum_{i=1}^{n_{b}}y_{\kappa}(x_{i})\phi_{i}(\xi(\eta_{v}))-\sum_{k=1}^{7}\widetilde{y}_{\kappa}\left(x_{k}\right)\check{\varphi}_{k}\left(\eta_{v}\right)\right]}{\sum_{v=1}^{n_{q}}\left|J_{\kappa}(\xi(\eta_{v}))\right|\left|J_{\widehat{\kappa}}(\eta_{v})\right|w_{v}\check{\varphi}_{7}\left(\eta_{v}\right)}.
\end{align*}
This approach can be generalized to other orders and shapes as well.

\begin{center}
\begin{figure}[tbph]
\begin{centering}
\subfloat[\label{fig:quadrilateral_triangle_transformation}Mapping from reference
quadrilateral, $\widehat{\kappa}_{\mathrm{quad}}$, to reference triangle,
$\widehat{\kappa}_{\mathrm{tri}}$.]{\includegraphics[width=0.7\columnwidth]{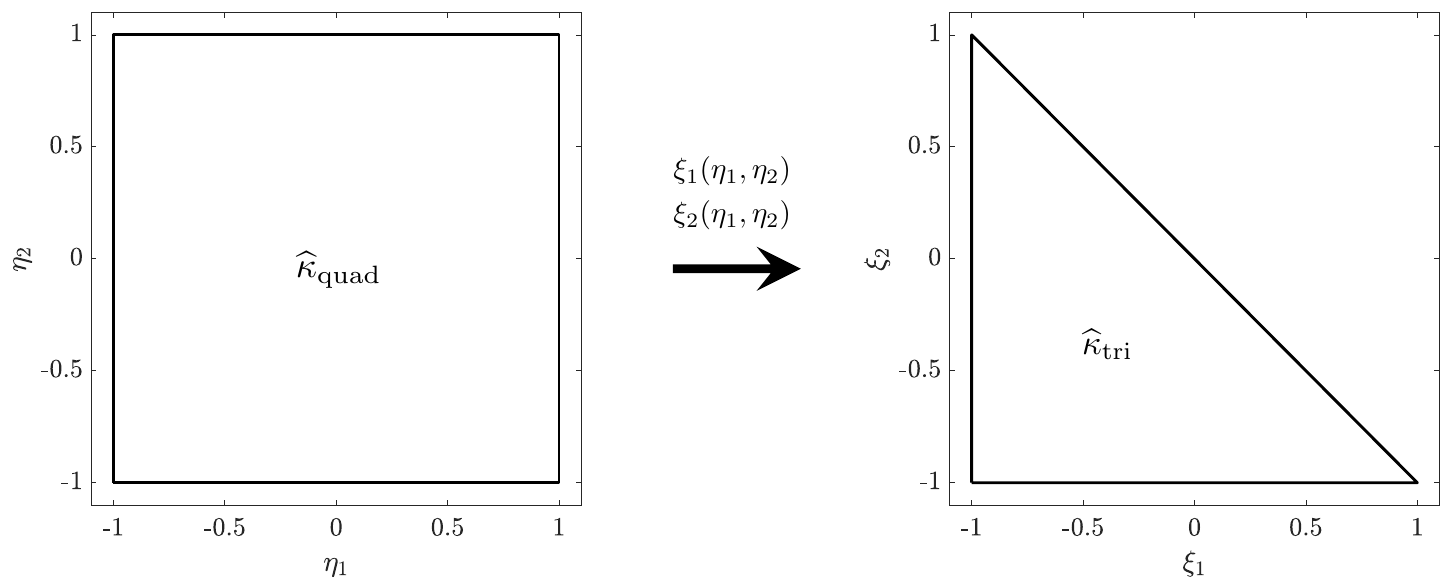}}
\par\end{centering}
\begin{centering}
\subfloat[\label{fig:triangle_nodal_set}Seven-node triangle obtained via degeneration
of the Gauss-Lobatto nine-node quadrilateral.]{\includegraphics[width=0.7\columnwidth]{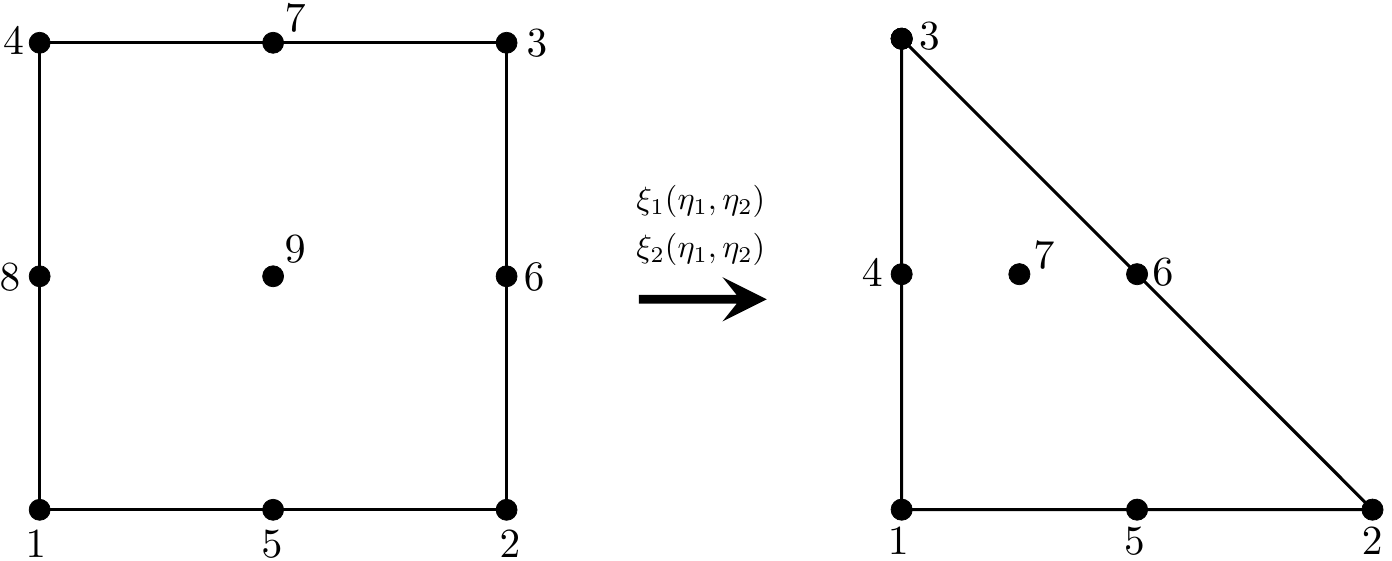}}
\par\end{centering}
\centering{}\caption{Transformation between reference quadrilateral and reference triangle,
as well as an illustrative nodal set for $\check{y}_{\kappa}$ on
triangular elements.}
\end{figure}
\par\end{center}

\section{Chemical mechanism\label{sec:Chemical-mechanism-detonation}}

\RevisionTextThree{Here, we provide information on the thermodynamic fits and reaction-rate
coefficients for the chemical mechanism used in Sections~\ref{subsec:2D-detonation-wave}
and~\ref{subsec:3D-detonation-wave}. The thermodynamic relations
for the $i$th species are given by 
\begin{align*}
\frac{W_{i}c_{p,i}\left(T\right)}{R^{0}} & =a_{i0}-a_{i1}\widehat{T}+a_{i2}\widehat{T}^{2}-a_{i3}\widehat{T}^{3}+a_{i4}\widehat{T}^{4},\\
\frac{W_{i}h_{i}\left(T\right)}{R^{0}} & =\int\frac{W_{i}c_{p,i}\left(\tau\right)d\tau}{R^{0}}-a_{i5}T_{r},\quad\frac{W_{i}s_{i}^{o}}{R^{0}}=\int\frac{W_{i}c_{p,i}\left(\tau\right)d\tau}{R^{0}\tau}-a_{i6},
\end{align*}
where $T_{r}=1000$ K, $\widehat{T}=T/T_{r}$, and $\tau$ is a dummy
variable for $T$. The coefficients are given in Table~\ref{tab:hydrogen-thermo}.}

\begin{table}
\begin{centering}
\caption{\protect\RevisionTextThree{Coefficients for the thermodynamic fits for each species.\label{tab:hydrogen-thermo}}}
\par\end{centering}
\centering{}%
\begin{tabular}{cccccccc}
\hline 
Species & $a_{i0}$ & $a_{i1}$ & $a_{i2}$ & $a_{i3}$ & $a_{i4}$ & $a_{i5}$ & $a_{i6}$\tabularnewline
\hline 
O & 2.72 & -3.61E-01 & 2.01E-01 & -4.53E-02 & 3.79E-03 & 29.17 & 3.98\tabularnewline
O\textsubscript{2} & 3.17 & 1.47 & -5.96E-01 & 1.25E-01 & -9.78E-03 & -1.01 & 6.19\tabularnewline
H & 2.50 & 0 & 0 & 0 & 0 & 25.47 & -0.45\tabularnewline
H\textsubscript{2} & 3.38 & 9.77E-02 & 2.56E-01 & -7.59E-02 & 6.63E-03 & -1.02 & -3.6\tabularnewline
OH & 3.58 & -2.13E-01 & 4.84E-01 & -1.44E-01 & 1.30E-02 & 3.42 & 1.74\tabularnewline
HO\textsubscript{2} & 3.41 & 3.22 & -1.12 & 2.03E-01 & -1.50E-02 & 0.33 & 7.24\tabularnewline
H\textsubscript{2}O & 3.57 & 1.37 & 1.71E-01 & -1.31E-01 & 1.46E-02 & -30.21 & 1.94\tabularnewline
H\textsubscript{2}O\textsubscript{2} & 3.70 & 5.62 & -2.22 & 4.17E-01 & -2.96E-02 & -17.68 & 5.57\tabularnewline
N\textsubscript{2} & 3.23 & 8.55E-01 & -1.51E-01 & -6.39E-03 & 2.68E-03 & -1. & 4.42\tabularnewline
Ar & 2.50 & 0 & 0 & 0 & 0 & -0.75 & 4.37\tabularnewline
\end{tabular}
\end{table}

\RevisionTextThree{The production rate of the $i$th species in Equation~(\ref{eq:reacting-navier-stokes-source-term})
is computed as
\[
\omega_{i}=\sum_{j=1}^{n_{r}}\nu_{ij}q_{j},
\]
where $n_{r}$ is the number of reactions, $\nu_{ij}=\nu_{ij}^{r}-\nu_{ij}^{f}$
is the difference between the reverse ($\nu_{ij}^{r}$) and the forward
stoichiometric coefficients ($\nu_{ij}^{f}$), and $q_{j}$ is the
rate of progress of the $j$th reaction. The chemical mechanism here
contains only irreversible reactions, such that $q_{j}$ is computed
as
\begin{equation}
q_{j}=k_{j}^{f}\prod_{i=1}^{n_{s}}C_{i}^{\nu_{ij}^{f}},\label{eq:chemical-reaction-rate-of-progress}
\end{equation}
where $k_{j}^{f}$ is the forward rate constant of the $j$th reaction.
The units of $q_{j}$ are assumed to be kmol m\textsuperscript{-3}
s\textsuperscript{-1}. The forward rate constants are computed using
the Arrhenius form
\begin{equation}
k_{j}^{f}=A_{j}T^{b_{j}}\exp\left(-\frac{E_{j}}{R^{0}T}\right),\label{eq:arrhenius}
\end{equation}
where $A_{j}>0$ and $b_{j}$ are parameters and $E_{j}$ is the activation
energy~\citep{Gio99,Kee96}. Table~\ref{tab:hydrogen-arrhenius}
lists the elementary chemical reactions included in the chemical mechanism,
for which the production rates are computed as above. }
\begin{table}
\begin{centering}
\caption{\protect\RevisionTextThree{List of elementary reactions for which the forward rate constant is
of the form~(\ref{eq:arrhenius}). The units of $A_{j}$ are a combination
of kmol, m\protect\textsuperscript{3}, s, and K that depend on the
reaction. \label{tab:hydrogen-arrhenius}}}
\par\end{centering}
\centering{}%
\begin{tabular}{ccccc}
\hline 
$j$, reaction \# & Equation & $A_{j}$ & $b_{j}$ & $E_{j}/R^{0}$ (K)\tabularnewline
\hline 
1 & $\mathrm{H}+\mathrm{O}_{2}\rightarrow\mathrm{O}+\mathrm{OH}$ & 1.86E11 & 0 & 8449\tabularnewline
2 & $\mathrm{O}+\mathrm{O}\mathrm{H}\rightarrow\mathrm{H}+\mathrm{O}_{2}$ & 1.48E10 & 0 & 342\tabularnewline
3 & $\mathrm{H}_{2}+\mathrm{O}\rightarrow\mathrm{H}+\mathrm{O}\mathrm{H}$ & 1.82E7 & 1 & 4479\tabularnewline
4 & $\mathrm{H}+\mathrm{O}\mathrm{H}\rightarrow\mathrm{H}_{2}+\mathrm{O}$ & 8.32E6 & 1 & 3497\tabularnewline
5 & $\mathrm{H}_{2}\mathrm{O}+\mathrm{O}\rightarrow\mathrm{O}\mathrm{H}+\mathrm{O}\mathrm{H}$ & 3.39E10 & 0 & 9234\tabularnewline
6 & $\mathrm{O}\mathrm{H}+\mathrm{O}\mathrm{H}\rightarrow\mathrm{H}_{2}\mathrm{O}+\mathrm{O}$ & 3.16E9 & 0 & 554\tabularnewline
7 & $\mathrm{H}_{2}\mathrm{O}+\mathrm{H}\rightarrow\mathrm{H}_{2}+\mathrm{O}\mathrm{H}$ & 9.55E10 & 0 & 10215\tabularnewline
8 & $\mathrm{H}_{2}+\mathrm{O}\mathrm{H}\rightarrow\mathrm{H}_{2}\mathrm{O}+\mathrm{H}$ & 2.19E10 & 0 & 2592\tabularnewline
9 & $\mathrm{H}_{2}\mathrm{O}_{2}+\mathrm{O}\mathrm{H}\rightarrow\mathrm{H}_{2}\mathrm{O}+\mathrm{H}\mathrm{O}_{2}$ & 1.00E10 & 0 & 906\tabularnewline
10 & $\mathrm{H}_{2}\mathrm{O}+\mathrm{H}\mathrm{O}_{2}\rightarrow\mathrm{H}_{2}\mathrm{O}_{2}+\mathrm{O}\mathrm{H}$ & 2.82E10 & 0 & 16501\tabularnewline
11 & $\mathrm{H}\mathrm{O}_{2}+\mathrm{O}\rightarrow\mathrm{O}\mathrm{H}+\mathrm{O}_{2}$ & 5.01E10 & 0 & 503\tabularnewline
12 & $\mathrm{O}\mathrm{H}+\mathrm{O}_{2}\rightarrow\mathrm{H}\mathrm{O}_{2}+\mathrm{O}$ & 6.46E10 & 0 & 28261\tabularnewline
13 & $\mathrm{H}\mathrm{O}_{2}+\mathrm{H}\rightarrow\mathrm{O}\mathrm{H}+\mathrm{O}\mathrm{H}$ & 2.51E11 & 0 & 956\tabularnewline
14 & $\mathrm{O}\mathrm{H}+\mathrm{O}\mathrm{H}\rightarrow\mathrm{H}\mathrm{O}_{2}+\mathrm{H}$ & 1.20E10  & 0 & 20179\tabularnewline
15 & $\mathrm{H}\mathrm{O}_{2}+\mathrm{H}\rightarrow\mathrm{H}_{2}+\mathrm{O}_{2}$ & 2.51E10  & 0 & 352\tabularnewline
16 & $\mathrm{H}_{2}+\mathrm{O}_{2}\rightarrow\mathrm{H}\mathrm{O}_{2}+\mathrm{H}$ & 5.50E10 & 0 & 29086\tabularnewline
17 & $\mathrm{H}\mathrm{O}_{2}+\mathrm{O}\mathrm{H}\rightarrow\mathrm{H}_{2}\mathrm{O}+\mathrm{O}_{2}$ & 5.01E10  & 0 & 503\tabularnewline
18 & $\mathrm{H}_{2}\mathrm{O}+\mathrm{O}_{2}\rightarrow\mathrm{H}\mathrm{O}_{2}+\mathrm{O}\mathrm{H}$ & 6.31E11 & 0 & 37168\tabularnewline
19 & $\mathrm{H}_{2}\mathrm{O}_{2}+\mathrm{O}_{2}\rightarrow\mathrm{H}\mathrm{O}_{2}+\mathrm{H}\mathrm{O}_{2}$ & 3.98E10 & 0 & 21457\tabularnewline
20 & $\mathrm{H}\mathrm{O}_{2}+\mathrm{H}\mathrm{O}_{2}\rightarrow\mathrm{H}_{2}\mathrm{O}_{2}+\mathrm{O}_{2}$ & 1.00E10 & 0 & 503\tabularnewline
21 & $\mathrm{H}_{2}\mathrm{O}_{2}+\mathrm{H}\rightarrow\mathrm{H}\mathrm{O}_{2}+\mathrm{H}_{2}$ & 1.70E9  & 0 & 1887\tabularnewline
22 & $\mathrm{H}\mathrm{O}_{2}+\mathrm{H}_{2}\rightarrow\mathrm{H}_{2}\mathrm{O}_{2}+\mathrm{H}$ & 7.24E8 & 0 & 9410\tabularnewline
\end{tabular}
\end{table}

\RevisionTextThree{Table~\ref{tab:hydrogen-three-body} lists the three-body reactions,
for which the rate of progress is scaled by a prefactor as~\citep{Kee96}
\begin{equation}
q_{j}=\left(\sum_{i=1}^{n_{s}}\alpha_{ij}C_{i}\right)\left(k_{j}^{f}\prod_{i=1}^{n_{s}}C_{i}^{\nu_{ij}^{f}}\right),\label{eq:three-body}
\end{equation}
where $\alpha_{ij}$ are the third-body efficiencies. Unless otherwise
specified, the third-body efficiencies are taken to be unity. Only
$\alpha_{\mathrm{H_{2}O},j}$ and $\alpha_{\mathrm{O_{2}},j}$ assume
non-unity values (although they are also taken to be unity in Reaction
32).}
\begin{table}
\begin{centering}
\caption{\protect\RevisionTextThree{List of three-body reactions for which the rate of progress is of
the form~(\ref{eq:three-body}). The units of $A_{j}$ are a combination
of kmol, m\protect\textsuperscript{3}, s, and K that depend on the
reaction. The third-body efficiencies are assumed to be unity, apart
from $\alpha_{\mathrm{H_{2}O},j}$ and $\alpha_{\mathrm{O_{2}},j}$.
\label{tab:hydrogen-three-body}}}
\par\end{centering}
\centering{}%
\begin{tabular}{ccccccc}
\hline 
$j$, reaction \# & Equation & $A_{j}$ & $b_{j}$ & $E_{j}/R^{0}$ (K) & $\alpha_{\mathrm{H_{2}O},j}$  & $\alpha_{\mathrm{O_{2}},j}$\tabularnewline
\hline 
23 & $\mathrm{H}_{2}\mathrm{O}+\mathrm{M}\rightarrow\mathrm{H}+\mathrm{O}\mathrm{H}+\mathrm{M}$ & 2.19E13 & 0 & 52838 & 6.5 & 0.4\tabularnewline
24 & $\mathrm{H}+\mathrm{O}\mathrm{H}+\mathrm{M}\rightarrow\mathrm{H}_{2}\mathrm{O}+\mathrm{M}$ & 1.41E17 & -2 & 0 & 6.5 & 0.4\tabularnewline
25 & $\mathrm{H}+\mathrm{O}_{2}+\mathrm{M}\rightarrow\mathrm{H}\mathrm{O}_{2}+\mathrm{M}$ & 1.66E9 & 0 & -503 & 6.5 & 0.4\tabularnewline
26 & $\mathrm{H}\mathrm{O}_{2}+\mathrm{M}\rightarrow\mathrm{H}+\mathrm{O}_{2}+\mathrm{M}$ & 2.29E12 & 0 & 23098 & 6.5 & 0.4\tabularnewline
27 & $\mathrm{H}_{2}\mathrm{O}_{2}+\mathrm{M}\rightarrow\mathrm{O}\mathrm{H}+\mathrm{O}\mathrm{H}+\mathrm{M}$ & 1.20E14 & 0 & 22896 & 6.5 & 0.4\tabularnewline
28 & $\mathrm{O}\mathrm{H}+\mathrm{O}\mathrm{H}+\mathrm{M}\rightarrow\mathrm{H}_{2}\mathrm{O}_{2}+\mathrm{M}$ & 9.12E8 & 0 & -2551 & 6.5 & 0.4\tabularnewline
29 & $\mathrm{O}+\mathrm{H}+\mathrm{M}\rightarrow\mathrm{O}\mathrm{H}+\mathrm{M}$ & 1.00E10 & 0 & 0 & 6.5 & 0.4\tabularnewline
30 & $\mathrm{O}\mathrm{H}+\mathrm{M}\rightarrow\mathrm{O}+\mathrm{H}+\mathrm{M}$ & 7.94E16 & -1 & 52194 & 6.5 & 0.4\tabularnewline
31 & $\mathrm{O}_{2}+\mathrm{M}\rightarrow\mathrm{O}+\mathrm{O}+\mathrm{M}$ & 5.13E12 & 0 & 57870 & 6.5 & 0.4\tabularnewline
32 & $\mathrm{O}+\mathrm{O}+\mathrm{M}\rightarrow\mathrm{O}_{2}+\mathrm{M}$ & 4.68E9 & -0.28 & 0 & 1 & 1\tabularnewline
33 & $\mathrm{H}_{2}+\mathrm{M}\rightarrow\mathrm{H}+\mathrm{H}+\mathrm{M}$ & 2.19E11 & 0 & 48309 & 6.5 & 0.4\tabularnewline
34 & $\mathrm{H}+\mathrm{H}+\mathrm{M}\rightarrow\mathrm{H}_{2}+\mathrm{M}$ & 3.02E9 & 0 & 0 & 6.5 & 0.4\tabularnewline
\end{tabular}
\end{table}

\end{document}